\newcommand{\nvd}{Z} 
\newtheorem{theorem}{Theorem}[section]
\newtheorem{lemma}[theorem]{Lemma}
\newtheorem{definition}{Definition} [section]
\begin{document}

\title{Properties of stochastic Kronecker graphs}

\author{Mihyun Kang$^1$\footnote{supported by DFG KA 2748/3-1 and Austrian Science Fund (FWF): W1230, P26826}, Micha\l {\ }Karo\'nski$^2$, Christoph Koch$^1$\footnote{supported by NAWI Graz and Austrian Science Fund (FWF): P26826} , Tam\'as Makai$^1$\footnote{supported by DFG KA 2748/3-1 and Austrian Science Fund (FWF): P26826}\\
\\
$^1$ Graz University of Technology\\
Institute of Optimization and Discrete Mathematics\\
Steyrergasse 30,  8010 Graz, Austria\\
	\{kang, ckoch, makai\}@math.tugraz.at\\
$^2$ Adam Mickiewicz University\\
Department of Discrete Mathematics\\
Umultowska 87, 61-614 Pozna\'n \\
karonski@amu.edu.pl 
}

\maketitle
\begin{abstract}
The stochastic Kronecker graph model introduced by Leskovec et al. is a
random graph with vertex set $\mathbb Z_2^n$, where two vertices $u$ and $v$ are connected
with probability 
$\alpha^{{u}\cdot{v}}\gamma^{(1-{u})\cdot(1-{v})}\beta^{n-{u}\cdot{v}-(1-{u})\cdot(1-{v})}$
independently of the presence or absence of any other edge, for fixed parameters $0<\alpha,\beta,\gamma<1$. They have shown empirically that the degree sequence resembles a power law
degree distribution. In this paper we show that the stochastic Kronecker graph a.a.s.\ does not feature a power law degree distribution for any parameters $0<\alpha,\beta,\gamma<1$. In addition, we analyze the number of subgraphs present in the stochastic Kronecker graph and study the typical neighborhood of any given vertex.
\end{abstract}
Keywords: \emph{random graphs, power law, degree distribution, subgraph}\\
MSC Primary: 05C80

\section{Introduction}

Kronecker graphs were introduced by Leskovec, Chakrabarti, Kleinberg and Faloutsos \cite{Kroneckerintr} in order to model real world networks. First they considered a deterministic model based on Kronecker multiplication which creates graphs exhibiting several properties of real world networks like heavy tailed degree distribution and average degree that grows as a power law with the size of the graph. They also introduced the random version of this model, called the stochastic Kronecker graph.

Let $n\in \mathbb N$ and $0<\alpha,\beta,\gamma<1$ be probabilities and define 
\[P=\begin{pmatrix} \alpha & \beta \\ \beta &\gamma \end{pmatrix}.\]
The stochastic Kronecker graph $K(n,P)$  is a graph whose vertex set is given by the set $\mathbb{Z}_2^n$ of all binary strings of length $n$. For any vertex $u$ we denote by $u_k$ its $k$-th digit. Then the probability that a pair of vertices $\{ u, v\}$ are connected by an edge is 
\[p_{u,v}=\prod_{k=1}^n P_{u_k,v_k}\]
 independently of the presence or absence of any other edge. Without loss of generality we may assume that $\gamma\leq \alpha$.

The stochastic Kronecker graph extends the binomial random graph model $G_{2^n\!,p}$, where every edge is inserted with probability $p$, independently of the presence or absence of any other edge, because selecting $\alpha=\beta=\gamma$ in the stochastic Kronecker graph ensures that every edge is inserted with the same probability $p=\alpha^n$.

Stochastic Kronecker graphs have been considered when $\alpha, \beta, \gamma$ are fixed constants not depending on $n$. Mahdian and Xu \cite{MR2829312} considered the graph when $\alpha\geq \beta \geq \gamma$. 
They have shown that the diameter of the stochastic Kronecker graph is bounded from above by a constant when it is connected. The appearance of the giant component in this graph has also been investigated. Horn and Radcliffe \cite{MR2900144} extending the result of Mahdian and Xu \cite{MR2829312} showed that $(\alpha+\beta)(\beta+\gamma)>1$ is a necessary and sufficient condition for the appearance of a unique giant component. They also determined the number of vertices in the giant component. Radcliffe and Young \cite{Kroneckerconnectivity} analyzed the connectivity and the size of the giant component in a generalized version of the stochastic Kronecker graph. Their results imply that the threshold for connectivity in stochastic Kronecker graphs is $\beta+\gamma=1$.

Leskovec, Chakrabarti, Kleinberg and Faloutsos \cite{Kroneckerintr} have shown empirically that stochastic Kronecker graphs resemble several real world networks and claimed that the model exhibits a {power law degree distribution}. Later, Leskovec, Chakrabarti, Kleinberg, Faloutsos and Ghahramani \cite{Kroneckerfit} fitted the model to several real world networks such as the Internet, citation graphs and online social networks.

The R-MAT model, introduced by Chakrabarti, Zhan and Faloutsos \cite{R-MATintr}, is closely related. The vertex set of this model is also $\mathbb{Z}_2^n$ and one also has parameters $\alpha, \beta, \gamma$. However, in this case one needs the additional condition that $\alpha+2\beta+\gamma=1$. In this model one generates $m$ vertex pairs $({u}, {v})$ in such a way that 
\[\mathbb{P}\Big(\left(u_k,v_k\right)=(a,b)\Big)=\left\{
	\begin{array}{ll}
		\alpha  & \mbox{if } a=b=1, \\
		\gamma & \mbox{if } a=b=0,\\
		\beta & \mbox{else},
	\end{array}
\right.
\]
 independently for each digit and inserts an edge between every generated vertex pair. The process creates a multigraph with $m$ edges and the graph after the multi-edges have been merged is refered to as the R-MAT model. The advantage of the R-MAT model over the stochastic Kronecker graph is that it can be generated significantly faster when $m$ is small. The degree sequence of this model has been studied by Gro\"er, Sullivan and Poole \cite{MR2836474} and by Seshadhri, Pinar and Kolda \cite{SPK11} when $m=\Theta(2^n)$, i.e.\ the number of edges is linear in the number of vertices. They have shown that the degree sequence of the model does not follow a power law distribution. However, no rigorous proof exists for the equivalence of the two models and in the stochastic Kronecker graph there is no restriction on the sum of the values of $\alpha,\beta,\gamma$.

\subsection{Main results}

In this paper we examine the asymptotic behavior of the stochastic Kronecker graph $K(n,P)$, when the entries of $P$ are fixed constants (independent of $n$). A property $\mathcal{P}$ holds asymptotically almost surely (in short a.a.s.) if the probability that $\mathcal{P}$ holds tends to one as $n\rightarrow \infty$. Furthermore we ignore floors and ceilings. The real world networks modeled with the stochastic Kronecker graphs are claimed to have a {\it power law degree distribution}. We  show that this does not match the asymptotic behavior of the stochastic Kronecker graph, which a.a.s.\ does not follow a power law degree distribution.
\begin{theorem}\label{main}
For all parameters $0<\alpha,\beta,\gamma<1$ the stochastic Kronecker graph $K(n,P)$ a.a.s.\ does not have a power law degree distribution.
\end{theorem}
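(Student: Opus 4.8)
The plan is to determine the degree sequence of $K(n,P)$ precisely enough that its failure to be a power law becomes a statement about binomial coefficients. The first ingredient is the expected degree. For $u\in\mathbb Z_2^n$ write $w(u)=\sum_{k=1}^n u_k$ for its weight. Since $\sum_v\prod_k P_{u_k,v_k}$ factorises over coordinates, a coordinate $k$ with $u_k=1$ contributing $\sum_{b\in\{0,1\}}P_{1,b}=\alpha+\beta$ and one with $u_k=0$ contributing $\beta+\gamma$, we get
\[
\mathbb E[\deg(u)]=(\alpha+\beta)^{\,w(u)}(\beta+\gamma)^{\,n-w(u)}=:D_{w(u)}
\]
up to the harmless diagonal term $p_{u,u}\le1$. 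Thus the expected degree depends only on the weight, and since $\gamma\le\alpha$ forces $\beta+\gamma\le\alpha+\beta$, the numbers $D_0\le D_1\le\dots\le D_n$ form a geometric progression $D_w=D_0\rho^{\,w}$ whose ratio $\rho:=(\alpha+\beta)/(\beta+\gamma)\ge1$ is a fixed constant.

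The second ingredient is concentration. Fix a small constant $\varepsilon>0$ with $1+\varepsilon<(1-\varepsilon)\rho$ whenever $\rho>1$. A Chernoff bound gives $\mathbb P\bigl(|\deg(u)-D_{w(u)}|>\varepsilon D_{w(u)}\bigr)\le2^{-2n}$ for every $u$ with $D_{w(u)}\ge n^2$, and $\mathbb P\bigl(\deg(u)\ge n^3\bigr)\le2^{-2n}$ for every $u$ with $D_{w(u)}<n^2$. A union bound over the $2^n$ vertices shows that, a.a.s.\ and simultaneously for all $u$, one has $\deg(u)\in[(1-\varepsilon)D_{w(u)},(1+\varepsilon)D_{w(u)}]$ if $D_{w(u)}\ge n^2$ and $\deg(u)<n^3$ otherwise. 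Hence a.a.s.\ the ``heavy'' part of the degree distribution consists of at most $n+1$ spikes — the $w$-th one, present when $D_w\ge n^2$, sitting inside $[(1-\varepsilon)D_w,(1+\varepsilon)D_w]$ with multiplicity $\binom nw$ — and these spikes are pairwise separated, the gaps $\bigl((1+\varepsilon)D_w,(1-\varepsilon)D_{w+1}\bigr)$ having fixed multiplicative length $(1-\varepsilon)\rho/(1+\varepsilon)>1$ when $\rho>1$. Consequently, a.a.s., for $d\ge n^3$ the counting function $N_{\ge d}:=|\{u:\deg(u)\ge d\}|$ agrees, up to an $O(\log n)$ additive change in $\log_2$, with $\sum_{j\ge w(d)}\binom nj$, where $w(d)$ is the least $w$ with $(1-\varepsilon)D_w\ge d$; in particular $N_{\ge d}$ is exactly constant on each gap.

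It remains to see that no such degree sequence is a power law, which I carry out case by case in the fixed parameters. If $\alpha+\beta\le1$ then $\beta+\gamma\le\alpha+\beta\le1$, so $D_w\le1$ and $\mathbb E[\deg(u)]\le1$ for every $u$, and a Chernoff-plus-union-bound argument yields $\Delta(K(n,P))<n^2$ a.a.s.; but a power-law degree distribution requires vertices of degree up to a fixed positive power of $N=2^n$, contradicting $n^2=N^{o(1)}$. If $\alpha=\gamma$ and $\alpha+\beta>1$ then $\rho=1$, all $D_w$ equal $(\alpha+\beta)^n\ge n^2$, and a.a.s.\ every vertex has degree within a factor $1+\varepsilon$ of $(\alpha+\beta)^n$: the degree distribution is then essentially a single atom at $d\approx(\alpha+\beta)^n$, with $N_{\ge d}$ dropping from $N$ to $0$ there, which is not of the form $\Theta\bigl(Nd^{-(\tau-1)}\bigr)$ across any unboundedly wide window of degrees. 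Finally, if $\alpha>\gamma$ and $\alpha+\beta>1$ then $\rho>1$; the least $w_0$ with $D_{w_0}\ge n^2$ satisfies $w_0=\theta_0 n+o(n)$ for an explicit $\theta_0\in[0,1)$, so the heavy spikes occupy $\Theta(n)$ weight classes and their locations sweep the window $[n^3,(\alpha+\beta)^n]$ of multiplicative size $2^{\Theta(n)}$. Writing $x=w/n$, the relation $\log_2 d\approx\log_2 D_w=n\bigl(\log_2(\beta+\gamma)+x\log_2\rho\bigr)$ is affine in $x$, while $\log_2 N_{\ge d}=n\,g(x)+O(\log n)$ with $g(x)=1$ on $[0,\tfrac12]$ and $g(x)=H(x)$ (the binary entropy) on $[\tfrac12,1]$, using that $\sum_{j\ge xn}\binom nj$ contains the central binomial coefficient for $x\le\tfrac12$ and lies between $\binom n{xn}$ and $n\binom n{xn}$ for $x\ge\tfrac12$. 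Since $g$ is never a line of negative slope on a positive-length interval — it is constant on $[0,\tfrac12]$ and strictly concave, strictly decreasing on $[\tfrac12,1]$ — the log--log profile $\log d\mapsto\log N_{\ge d}$ over the window $[n^3,(\alpha+\beta)^n]$ is not the straight line of slope $-(\tau-1)$ that a power law with exponent $\tau>1$ demands (the $O(\log n)$ error being negligible against the profile's $\Theta(n)$ total variation); contradiction.

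The main obstacle is this last case — showing that a rigid family of geometrically spaced spikes with binomial-coefficient weights cannot imitate a power law. Two features of the fixed-parameter regime make it work: the spike ratio $\rho$ is a constant bounded away from $1$, so the gaps in the degree spectrum are macroscopic and $N_{\ge d}$ is genuinely flat on long intervals; and the binary entropy function is strictly concave, so the true log--log degree profile is a scaled and affinely reparametrised entropy arc rather than a line. One also has to check that the window of degrees over which a power law is posited is long enough — polynomial in $N$, as in the usual definition — for this concavity to be visible, while the two degenerate cases ($\rho=1$, and $\alpha+\beta\le1$) need the separate and easier arguments above because the entropy mechanism collapses there. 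The remaining work — the Chernoff estimates and threshold bookkeeping of the concentration step, and the estimate $\log_2 N_{\ge d}=n\,g(w/n)+O(\log n)$ — is routine.
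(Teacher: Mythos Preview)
Your argument is correct, but it proceeds quite differently from the paper's. The paper works at the \emph{bottom} of the degree distribution: for each fixed $d\in\mathbb N$ it computes $\mathbb E(Z_d)$ via a Poisson approximation for $\deg(u)$ (valid when $D_{w(u)}$ is not too large), obtains the closed form $\mathbb E(Z_d)=\Theta\bigl(((\alpha+\beta)^d+(\beta+\gamma)^d)^n\bigr)$ or $o(2^n)$ through a six-case analysis, and observes that $\mathbb E(Z_d)=\Theta(2^n)$ for all finite $d$ forces $\alpha+\beta=\beta+\gamma=1$, in which case the degree law is Poisson$(1)$. You instead work at the \emph{top}: Chernoff concentration pins each weight class to a geometric spike, and the tail count $N_{\ge d}$ becomes a binomial tail sum whose log--log profile is an affinely reparametrised binary entropy curve, never a line of negative slope.

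What each buys: the paper's route yields sharper information --- exact orders for $\mathbb E(Z_d)$ and the identification of the critical case as Poisson --- at the price of the delicate case analysis in their Lemma establishing $\mathbb E(Z_d)=\Theta(((\alpha+\beta)^d+(\beta+\gamma)^d)^n)$ versus $o(2^n)$. Your route is shorter and closer to the picture one actually draws (the log--log degree plot is curved, not straight), and sidesteps the Poisson approximation entirely; on the other hand it says nothing about small degrees and leans on a tail-based reading of ``power law'' ($N_{\ge d}\sim N d^{-(\tau-1)}$ over a window polynomial in $N$) rather than the density-based reading the paper adopts ($Z_d\sim c d^{-\tau}N$ for each fixed $d$). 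Both readings are standard, and under either the conclusion is the same; but you should flag which one you are using, since your Case $\alpha+\beta\le1$ argument (``max degree is $N^{o(1)}$, hence no power-law tail'') is immediate under the tail definition but does not by itself refute the density definition.
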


Recall that that the binomial random graph $G_{2^n\! ,\alpha^n}$ is a special case of the stochastic Kronecker graph $K(n,P)$ when $\alpha=\beta=\gamma$. The question concerning the degree distribution and the number of subgraphs in the binomial random graph have been thoroughly studied. 

Erd\H{o}s and R\'enyi \cite{MR0125031} showed that the degree distribution of $G_{2^n\!,\alpha^n}$ follows a Poisson distribution when $\alpha\leq 1/2$, where the parameter of the distribution depends on $\alpha$. They also showed that if $\alpha>1/2$, then there are a.a.s.\ no vertices of finite degree, in fact the degree of every vertex is a.a.s.\ $(1+o(1))(2\alpha)^n$.

The threshold for the appearance of subgraphs for $G_{2^n\! ,\alpha^n}$ has been established by Bollob\'as \cite{MR620729}. Let $G$ be a small graph and let $v(G)$ and $e(G)$ denote the number of vertices and edges of $G$. The threshold for the appearance of $G$  is the smallest value of $\alpha$ such that $2^{v(G')}\alpha^{e(G')}\geq 1$ holds for every $G'\subseteq G$. Additionally Alon and Spencer \cite{probabilisticmethod} show that if for every $G'\subseteq G$ we have that $2^{v(G')}\alpha^{e(G')}>1$, then a.a.s.\ there are $(1+o(1))\left(2^{v(G)}\alpha^{e(G)}\right)^n$ labeled copies of $G$ present in $G_{2^n\!,\alpha^n}$.

We examine the subgraphs contained in the stochastic Kronecker graph. The first result determines the expected number of copies of a given subgraph present in $K(n,P)$.

\begin{lemma}\label{subgraphs}
Let $G$ be a simple graph, let $X_G$ be the number of labeled copies of $G$ in $K(n,P)$ and let $L_G$ be the set of functions $g:V(G)\rightarrow \mathbb{Z}_2$.
Then we have
\[\mathbb{E}(X_G)=(1+o(1))\left(\sum_{g\in L_G}\prod_{\{u,v\}\in E(G)}P_{g(u),g(v)}\right)^n.\]
\end{lemma}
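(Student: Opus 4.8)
The plan is to use linearity of expectation and then decouple the $n$ binary coordinates. A labeled copy of $G$ in $K(n,P)$ is an injective map $\phi\colon V(G)\to\mathbb Z_2^n$ that sends edges of $G$ to edges of $K(n,P)$; since distinct edges of $G$ are sent to distinct vertex pairs and the edges of $K(n,P)$ are independent,
\[
\mathbb E(X_G)=\sum_{\phi}\ \prod_{\{u,v\}\in E(G)}p_{\phi(u),\phi(v)}
=\sum_{\phi}\ \prod_{\{u,v\}\in E(G)}\ \prod_{k=1}^{n}P_{\phi(u)_k,\phi(v)_k},
\]
the sum running over all injective $\phi$. For a map $\phi$ and a coordinate $k\in\{1,\dots,n\}$ let $g_k\in L_G$ be the function $g_k(w)=\phi(w)_k$. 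Swapping the two products above, each summand becomes $\prod_{k=1}^{n}W(g_k)$ with $W(g):=\prod_{\{u,v\}\in E(G)}P_{g(u),g(v)}$. Moreover $\phi\mapsto(g_1,\dots,g_n)$ is a bijection between maps $V(G)\to\mathbb Z_2^n$ and tuples in $L_G^{\,n}$, and $\phi$ is injective exactly when the tuple \emph{separates} $V(G)$, i.e.\ for every pair of distinct $a,b\in V(G)$ some coordinate $k$ has $g_k(a)\neq g_k(b)$.

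I would then obtain the main term by dropping the separation condition: the sum over all of $L_G^{\,n}$ factorizes as
\[
\sum_{(g_1,\dots,g_n)\in L_G^{\,n}}\ \prod_{k=1}^{n}W(g_k)=\Bigg(\sum_{g\in L_G}W(g)\Bigg)^{\!n}=:S^{\,n},
\]
which is precisely the asserted expression. It remains to check that the tuples which fail to separate $V(G)$ contribute only $o(S^{\,n})$. By a union bound over the $\binom{v(G)}{2}$ (hence constantly many) pairs of distinct vertices, it suffices to bound, for a fixed pair $\{a,b\}$, the contribution of the tuples with $g_k(a)=g_k(b)$ for all $k$, which equals
\[
\Bigg(\sum_{\substack{g\in L_G\\ g(a)=g(b)}}W(g)\Bigg)^{\!n}=:S_{a,b}^{\,n}.
\]
Since every entry of $P$ is a positive constant, $W(g)>0$ for all $g\in L_G$, and $L_G$ contains a function with $g(a)\neq g(b)$; hence $S_{a,b}<S$, so $S_{a,b}/S$ is a constant in $(0,1)$ and $S_{a,b}^{\,n}=(S_{a,b}/S)^{n}S^{\,n}=o(S^{\,n})$.

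Combining the two, $\mathbb E(X_G)=S^{\,n}-(\text{contribution of non-separating tuples})$, so $(1-o(1))S^{\,n}\le\mathbb E(X_G)\le S^{\,n}$, which gives the lemma. (The case $v(G)\le1$ is trivial, every tuple being vacuously separating.) I expect the only delicate point to be the bookkeeping in the first paragraph, namely identifying injectivity of $\phi$ with the coordinate functions jointly separating the vertices of $G$; once that is set up, the negligibility of the error term is immediate from the positivity of the entries of $P$.
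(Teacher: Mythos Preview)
Your proof is correct and follows essentially the same approach as the paper: both express $\mathbb E(X_G)$ as a sum over injective maps, bound it above by the sum over all maps (which factorizes coordinate-wise into $B_G^n$), and then show that the contribution of non-injective maps is negligible by fixing a collapsed pair $\{a,b\}$ and using positivity of the entries of $P$ to get a strictly smaller base. Your explicit identification of injectivity with the tuple $(g_1,\dots,g_n)$ separating $V(G)$ is a nice way to phrase the bookkeeping, but the argument is otherwise the same as the paper's.
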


We also show concentration for several classes of graphs. In particular, when $\alpha=\gamma,$ the number of copies of a cycle of length $k$ contained in $K(n,P)$ is concentrated around its mean. 

\begin{theorem}\label{cycleconc}
Let $C_k$ be a cycle of length $k$ and $X_{C_k}$ the number of labeled copies of $C_k$ in $K(n,P)$. Assume that $\alpha=\gamma$. Then the threshold for the appearance of $C_k$ in the stochastic Kronecker graph is $(\alpha+\beta)^{k}+(\alpha-\beta)^k=1$. Additionally, if $(\alpha+\beta)^{k}+(\alpha-\beta)^k>1$, then a.a.s.\ $$X_{C_k}=(1+o(1))\left((\alpha+\beta)^{k}+(\alpha-\beta)^k\right)^n.$$
\end{theorem}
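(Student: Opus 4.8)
The plan is to compute $\mathbb{E}(X_{C_k})$ from Lemma~\ref{subgraphs}, use the first moment method below the stated threshold, and use the second moment method above it. For the first step, note that since $\alpha=\gamma$ the matrix $P$ is symmetric with eigenvalues $\alpha+\beta$ and $\alpha-\beta$ (eigenvectors $(1,1)$ and $(1,-1)$). Writing the vertices of $C_k$ cyclically as $v_1,\dots,v_k$, the base of the exponential in Lemma~\ref{subgraphs} becomes
\[\sum_{g\in L_{C_k}}\ \prod_{\{u,v\}\in E(C_k)}P_{g(u),g(v)}=\sum_{x_1,\dots,x_k\in\mathbb{Z}_2}P_{x_1,x_2}P_{x_2,x_3}\cdots P_{x_k,x_1}=\mathrm{tr}(P^k)=(\alpha+\beta)^k+(\alpha-\beta)^k.\]
Hence, setting $\Lambda:=(\alpha+\beta)^k+(\alpha-\beta)^k$, Lemma~\ref{subgraphs} gives $\mathbb{E}(X_{C_k})=(1+o(1))\Lambda^n$; note $\Lambda>0$ since $\alpha+\beta>|\alpha-\beta|$, and $\Lambda^n\to\infty$ precisely when $\Lambda>1$. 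If $\Lambda<1$ then $\mathbb{E}(X_{C_k})\to 0$, and Markov's inequality gives $\mathbb{P}(X_{C_k}\ge 1)\le\mathbb{E}(X_{C_k})\to 0$, so a.a.s.\ there is no copy of $C_k$; this settles one side of the threshold.

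Now assume $\Lambda>1$. I would expand $\mathbb{E}(X_{C_k}^2)=\sum_{A,B}\mathbb{P}(A\text{ and }B\text{ both present})$ over ordered pairs of labeled copies of $C_k$. Because edges of $K(n,P)$ occur independently, the two events are independent whenever $A$ and $B$ share no edge, so all edge-disjoint pairs together contribute at most $\mathbb{E}(X_{C_k})^2$, and therefore
\[\mathrm{Var}(X_{C_k})\le\sum_{A,B\,:\,E(A)\cap E(B)\neq\emptyset}\ \prod_{\{u,v\}\in E(A)\cup E(B)}p_{u,v}.\]
For any pair counted here the common edge set $E(A)\cap E(B)$, being a nonempty subgraph of a $k$-cycle, is a disjoint union of paths, so $H:=A\cup B$ is one of a finite list (depending only on $k$) of ``overlap graphs'' — unions of two $k$-cycles sharing at least one edge, including $C_k$ itself, arising from $A=B$. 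Grouping the pairs by the isomorphism type of $H$, the right-hand side equals $\sum_H c_H\,\mathbb{E}(X_H)$ with constants $c_H=c_H(k)$, and by Lemma~\ref{subgraphs} we have $\mathbb{E}(X_H)=(1+o(1))\lambda_H^n$, where $\lambda_H:=\sum_{g\in L_H}\prod_{\{u,v\}\in E(H)}P_{g(u),g(v)}$. Thus the whole argument reduces to establishing that $\lambda_H<\Lambda^2$ for every overlap graph $H$: granting this, $\mathrm{Var}(X_{C_k})=o(\mathbb{E}(X_{C_k})^2)$, Chebyshev's inequality yields $X_{C_k}=(1+o(1))\Lambda^n$ a.a.s., and in particular $X_{C_k}>0$ a.a.s., which completes the threshold statement.

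Proving $\lambda_H<\Lambda^2$ for all overlap graphs is the heart of the matter. When $H=C_k$ the inequality is just $\Lambda<\Lambda^2$, i.e.\ $\Lambda>1$. For a genuine overlap I would exploit the eigendecomposition of $P$ once more: since $(1,1)$ and $(1,-1)$ are eigenvectors, $(P^m)_{x,y}$ equals $\tfrac12\big((\alpha+\beta)^m+(\alpha-\beta)^m\big)$ if $x=y$ and $\tfrac12\big((\alpha+\beta)^m-(\alpha-\beta)^m\big)$ if $x\neq y$, which makes $\lambda_H$ explicitly computable for any $H$ assembled from paths joined at common endpoints. The worst case should be the ``theta graph'' obtained by gluing $A$ and $B$ along a single edge — three internally disjoint paths of lengths $1$, $k-1$, $k-1$ — since identifying more vertices only shrinks the defining sum; for this $H$ the inequality reduces, after a short computation, to a polynomial inequality in $s:=\alpha+\beta$ and $t:=\alpha-\beta$. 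The hypothesis $\Lambda>1$ forces $s>2^{-1/k}$ (because $\Lambda<2s^k$), and together with $|t|<s$ this should be enough to close the estimate; the remaining overlap types, where the shared forest has more edges or more components, are handled analogously and yield strictly smaller $\lambda_H$. Making this last step uniform over $k$ and over all overlap types — and in particular confirming that the single-edge gluing is indeed extremal — is the main obstacle.
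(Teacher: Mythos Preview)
Your overall plan matches the paper's: compute the base value $B_{C_k}$, use Markov below the threshold, and for $\Lambda:=B_{C_k}>1$ reduce the second moment to showing $B_H<\Lambda^2$ for every overlap graph $H\in\mathcal{F}_{C_k}$. Your trace identity $B_{C_k}=\mathrm{tr}(P^k)=(\alpha+\beta)^k+(\alpha-\beta)^k$ is a cleaner derivation than the paper's direct parity count, but leads to the same formula.

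The genuine gap is your extremality claim. You assert that the theta graph with a single shared edge ($\ell=1$) is the worst case, and that larger overlaps give strictly smaller $\lambda_H$. This is false in general. The paper computes, for the graph $F_\ell$ formed by two $k$-cycles sharing $\ell$ consecutive edges,
\[
B_{F_\ell}=\tfrac12(\alpha+\beta)^{2k-\ell}\Bigl(1+\bigl(\tfrac{\alpha-\beta}{\alpha+\beta}\bigr)^{2k-2\ell}+2\bigl(\tfrac{\alpha-\beta}{\alpha+\beta}\bigr)^{k}\Bigr),
\]
and the prefactor $(\alpha+\beta)^{2k-\ell}$ \emph{increases} with $\ell$ whenever $\alpha+\beta>1$. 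So a larger overlap can produce a larger base value, and the single-edge case is not automatically extremal. What the paper actually shows is that the sequence $(B_{F_\ell})_{1\le\ell\le k-2}$ has at most one sign change in its forward differences, hence its maximum is attained at one of the two endpoints $\ell=1$ or $\ell=k-2$; both then have to be checked against $\Lambda^2$. The verification splits into three cases ($\alpha+\beta\le 1$; $\alpha+\beta>1$ with $\alpha\ge\beta$ or $k$ even; and $\alpha+\beta>1$, $\beta>\alpha$, $k$ odd), and the last of these requires a genuinely careful polynomial argument for $F_{k-2}$ that your ``short computation'' for $F_1$ would miss entirely.

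A smaller point: your reduction from an arbitrary overlap graph $F'$ with $\ell$ shared edges to the canonical $F_\ell$ is not a consequence of the vertex-identification principle you cite, because different placements of the $\ell$ shared edges do not arise from one another by identifying vertices. The paper handles this step by a parity argument on \emph{valid edge labelings}: partitioning $E(F')$ into $E_0,E_1,E_2$ (shared, first-only, second-only), a labeling is valid only if the three parts receive the same parity of $1$'s, and every such labeling already appears in the count for $F_\ell$, whence $B_{F'}\le B_{F_\ell}$. You should make this explicit rather than appealing to monotonicity under identification.
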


Theorem~\ref{cycleconc} implies that the even cycles appear in the stochastic Kronecker graph in order of their length (or at the same time, when $\alpha=\beta$). More precisely, one can find parameters $\alpha,\beta$ such that $C_{2k-2}$ is a.a.s.\ present in the stochastic Kronecker graph but $C_{2k}$ is not, and for any parameter $\alpha,\beta$ for which $C_{2k}$ is a.a.s.\ present in the stochastic Kronecker graph then so is $C_{2k-2}$. The same holds for odd cycles when $\alpha>\beta$, but the reverse is true when $\beta>\alpha$ as in this case $C_{2k+1}$ appears before $C_{2k-1}$. This is due to the fact that in the stochastic Kronecker graph, when $\alpha=\gamma$, the neighborhood of every vertex consists mostly of vertices that differ on approximately $\beta n/(\alpha+\beta)$ digits. In order to state this more formally denote by $N(u)$ the neighborhood of a vertex $u$ in $K(n,P)$ and for vertices $u,v$ in $K(n,P)$ let $H(u,v)$ be the Hamming distance between $u$ and $v$. 
Amongst other results we prove the following theorem in Section \ref{structure}.

\begin{theorem}\label{ndistance}
 Assume that $\alpha=\gamma$ and $\alpha+\beta>1$. Then a.a.s.\ for all vertices $u$ in $K(n,P)$ we have that 
$|N(u)|=(1+o(1))(\alpha+\beta)^n$ and
\[\left|\left\{w\in N(u): H(u,w)=(1+o(1)) \frac{\beta}{\alpha+\beta}n\right\}\right|=(1+o(1))(\alpha+\beta)^n.\]
\end{theorem}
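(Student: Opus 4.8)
\section*{Proof proposal for Theorem~\ref{ndistance}}

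The starting observation is that the hypothesis $\alpha=\gamma$ makes the edge probability depend only on the Hamming distance: for any two vertices $u,v$ with $H(u,v)=d$ one has $p_{u,v}=\alpha^{n-d}\beta^{d}$, since each coordinate contributes a factor $\alpha$ when $u_k=v_k$ (whether both are $0$ or both are $1$) and a factor $\beta$ otherwise. Fixing a vertex $u$, the set of vertices at distance $d$ from $u$ has size $\binom{n}{d}$, and each of them is joined to $u$ independently with probability $\alpha^{n-d}\beta^{d}$. Hence $\mathbb{E}\lvert N(u)\rvert=\sum_{d=1}^{n}\binom{n}{d}\alpha^{n-d}\beta^{d}=(\alpha+\beta)^n-\alpha^n=(1+o(1))(\alpha+\beta)^n$, using $\alpha<1<\alpha+\beta$. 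The key identity behind the second assertion is that, writing $p:=\beta/(\alpha+\beta)$,
\[
\frac{1}{(\alpha+\beta)^n}\binom{n}{d}\alpha^{n-d}\beta^{d}=\binom{n}{d}p^{d}(1-p)^{n-d}=\mathbb{P}\bigl(\mathrm{Bin}(n,p)=d\bigr),
\]
so that the expected degree of $u$, up to the negligible term $\alpha^n$, equals $(\alpha+\beta)^n$ distributed over the distances exactly according to a $\mathrm{Bin}(n,p)$ law centred at $pn=\tfrac{\beta}{\alpha+\beta}n$.

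Fix a slowly decaying $\varepsilon_n$, say $\varepsilon_n=n^{-1/3}$, and set $D_n=\{d:\lvert d-pn\rvert\le\varepsilon_n n\}$, $N_{\mathrm g}(u)=\{w\in N(u):H(u,w)\in D_n\}$ and $N_{\mathrm b}(u)=N(u)\setminus N_{\mathrm g}(u)$. By the identity above and Hoeffding's inequality for the binomial, $\mathbb{E}\lvert N_{\mathrm b}(u)\rvert\le(\alpha+\beta)^n\,\mathbb{P}\bigl(\lvert\mathrm{Bin}(n,p)-pn\rvert>\varepsilon_n n\bigr)\le 2(\alpha+\beta)^n e^{-2\varepsilon_n^2 n}=:\mu_n=o\bigl((\alpha+\beta)^n\bigr)$. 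Both $\lvert N(u)\rvert$ and $\lvert N_{\mathrm b}(u)\rvert$ are sums of independent indicator random variables, so we may apply concentration. For $\lvert N(u)\rvert$ a multiplicative Chernoff bound gives $\mathbb{P}\bigl(\bigl\lvert\,\lvert N(u)\rvert-\mathbb{E}\lvert N(u)\rvert\,\bigr\rvert>\varepsilon_n\mathbb{E}\lvert N(u)\rvert\bigr)\le 2\exp\bigl(-\tfrac13\varepsilon_n^2\mathbb{E}\lvert N(u)\rvert\bigr)=2\exp\bigl(-\Theta(n^{-2/3}(\alpha+\beta)^n)\bigr)$; since $(\alpha+\beta)^n$ grows exponentially while $2^n=e^{n\ln 2}$, the exponent dominates $n$, so a union bound over all $2^n$ vertices still tends to $0$. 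For $\lvert N_{\mathrm b}(u)\rvert$ a symmetric Chernoff bound is unavailable because $\mu_n$ need not be large, so I would use the factorial‑moment upper‑tail estimate $\mathbb{P}\bigl(\lvert N_{\mathrm b}(u)\rvert\ge k\bigr)\le\mathbb{E}\binom{\lvert N_{\mathrm b}(u)\rvert}{k}\le\mu_n^{k}/k!\le(e\mu_n/k)^{k}$, valid for every $k$. Choosing $k_n=\max\{2e\mu_n,\,n^2\}$, which is still $o((\alpha+\beta)^n)$ and at least $n^2$, this probability is at most $2^{-k_n}\le 2^{-n^2}$, so again a union bound over the $2^n$ vertices tends to $0$.

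Combining the two estimates, a.a.s.\ every vertex $u$ simultaneously satisfies $\lvert N(u)\rvert=(1+o(1))(\alpha+\beta)^n$ and $\lvert N_{\mathrm b}(u)\rvert\le k_n=o((\alpha+\beta)^n)$, whence $\lvert N_{\mathrm g}(u)\rvert=\lvert N(u)\rvert-\lvert N_{\mathrm b}(u)\rvert=(1+o(1))(\alpha+\beta)^n$. Since $\varepsilon_n\to 0$, membership in $N_{\mathrm g}(u)$ is precisely the condition $H(u,w)=(1+o(1))\tfrac{\beta}{\alpha+\beta}n$, which gives both assertions of the theorem.

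The main obstacle is that concentration must survive a union bound over $2^n$ vertices while the expected degree $(\alpha+\beta)^n$ can be much smaller than $2^n$ (indeed $\alpha+\beta$ can be any number exceeding $1$), so plain second‑moment arguments are too weak and one is forced into exponential‑tail and factorial‑moment estimates. The additional twist is that the ``bad'' part $N_{\mathrm b}(u)$ has mean $\mu_n$ that is only sub‑exponentially — not polynomially — smaller than $(\alpha+\beta)^n$, which rules out a two‑sided Chernoff bound and makes the one‑sided upper‑tail bound essential; calibrating $\varepsilon_n$ and the threshold $k_n$ so that all three union bounds close is then routine bookkeeping.
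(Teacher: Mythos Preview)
Your proposal is correct and follows essentially the same route as the paper: identify the binomial identity $\binom{n}{d}\alpha^{n-d}\beta^{d}=(\alpha+\beta)^n\,\mathbb{P}(\mathrm{Bin}(n,p)=d)$, split neighbours into a ``good'' window around $pn$ and a ``bad'' remainder, bound the bad expectation via a binomial tail, and then use exponential concentration on each part strong enough to survive a union bound over the $2^n$ vertices. The only tactical differences are that the paper takes a narrower window of width $\Theta(\sqrt{n}\log n)$ rather than your $n^{2/3}$, and handles the bad part with a one-sided Chernoff/Bernstein bound (the mean $\mu_n$ is in fact exponentially large here, so such a bound is available) instead of your factorial-moment estimate; both choices work and neither changes the architecture of the argument.
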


\subsection{Outline of the proofs}
The proof of Theorem \ref{main} relies on calculating the expected number of vertices of degree $d$. The probability that a vertex has degree $d$ depends only on its \emph{weight}, i.e.\ the sum of its digits. To be more precise, the degree of a vertex with weight $w$ is a multinomial random variable, however it can be approximated by a Poisson random variable with parameter $(\alpha+\beta)^w(\beta+\gamma)^{n-w}$. Therefore, the expected number of vertices of degree $d$ is approximately
\[\sum_{w=0}^n \binom{n}{w} \frac{(\alpha+\beta)^{dw}(\beta+\gamma)^{d(n-w)}}{d!}\exp(-(\alpha+\beta)^w(\beta+\gamma)^{n-w}).\]
The real difficulty in proving Theorem \ref{main} lies in determining the value of this sum. It turns out that the parameters $\alpha,\beta,\gamma$ have to satisfy $\alpha+\beta=\beta+\gamma=1$ for this sum to be $\Theta(2^n)$ for every finite value of $d$. However this would be a necessary condition for the degree sequence of a graph to follow a power law distribution. Therefore the parameters $\alpha,\beta,\gamma$ must satisfy $\alpha+\beta=\beta+\gamma=1$ for the graph to have a power law degree sequence. However, in this case the sum simplifies to
\[\sum_{w=0}^n\binom{n}{w}\frac{1}{\mathrm{e}d!}=2^n\frac{1}{\mathrm{e}d!},\]
which indicates that the degree sequence follows a Poisson distribution with parameter 1, not a power law.

In order to show concentration for subgraphs we use the second moment method. As in Lemma~\ref{subgraphs}, let $X_G$ denote the number of labeled copies of $G$ in $K(n,P)$. Then $X_G$ is concentrated if we can show that $\mathbb{E}(X_F)=o\left((\mathbb{E}(X_G))^2\right)$ for every graph $F$ which is the union of two edge-overlapping copies of $G$. 
The major difficulty to overcome is that although Lemma \ref{subgraphs} gives us a formula for calculating the expected number of labeled copies of a graph it does not give us a closed formula or even a simple method to compare the expected number of copies of two different graphs. 
We examine classes of graphs where we can express $\mathbb{E}(X_G)$ in a closed form. However, it is still difficult to compare $(\mathbb{E}(X_G))^2$ to $\mathbb{E}(X_F)$ for most graphs $F$ formed of two edge-overlapping copies of $G$. It turns out that for the classes of graphs considered in this paper $\mathbb{E}(X_F)$ takes its maximum when the two copies of $G$ overlap in as many edges as possible or as few as possible. These graphs resemble either two disjoint copies of $G$ or a single copy of $G$, enabling us to compare the expected number of copies from these graphs.

Finally, we examine the neighborhood of the vertices in the stochastic Kronecker graph when $\alpha=\gamma$ and $\alpha+\beta>1$. Fix a vertex $v$. Under the conditions we have that the expected number of neighbors that differ on precisely $k$ elements from $v$ is 
$$\binom{n}{k}\alpha^{n-k}\beta^k=(\alpha+\beta)^n\, \mathbb{P}\left(\mathrm{Bin}\left(n,\frac{\beta}{\alpha+\beta}\right)=k\right),$$
which implies, by summing over all $k\in\{0,\dots,n\},$ that the expected degree of $v$ is $(\alpha+\beta)^n$. Moreover it is a well-known fact that any binomial random variable is concentrated around its mean. Therefore we would expect that almost all of the contribution to the degree of $v$ comes from the terms where $k\approx \beta n/(\alpha+\beta)$. In fact this already shows us that this holds in expectation and one can show that this holds a.a.s.\ using Chernoff's inequalities. 

\section{Degree sequence: proof of Theorem \ref{main}}\label{degree}
In order to establish the degree sequence of the stochastic Kronecker graph we first need to determine the expectation and the variance of the degree of a fixed vertex. 
\begin{lemma} \label{expectation}
Let $u,v$ be two vertices in $K(n,P).$ We denote by $d(v)$ the degree of $v$ and by $w(v)=\sum_{k=1}^{n}v_k$ its weight. Furthermore, let $I_{u,v}$ be the event that the edge $\{u,v\}$ is present in the graph. Then
\begin{align*}
\mathbb{E}(d(v))&=(\alpha+\beta)^{w(v)}(\beta+\gamma)^{n-w(v)},\\
\sum_{u\in \mathbb{Z}_2^n} (\mathbb{E}(I_{u,v}))^2&=(\alpha^2+\beta^2)^{w(v)}(\beta^2+\gamma^2)^{n-w(v)},\\
\mathrm{Var}(d(v))&=(\alpha+\beta)^{w(v)}(\beta+\gamma)^{n-w(v)}-(\alpha^2+\beta^2)^{w(v)}(\beta^2+\gamma^2)^{n-w(v)}.
\end{align*}
\end{lemma}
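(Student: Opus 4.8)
The plan is to deduce all three identities from one elementary coordinatewise factorization together with the independence of the edges incident to $v$. Writing $d(v)=\sum_{u\in\mathbb{Z}_2^n\setminus\{v\}} I_{u,v}$, the only tool needed is that for every function $f\colon\{0,1\}^2\to\mathbb{R}$ one has
\[\sum_{u\in\mathbb{Z}_2^n}\prod_{k=1}^n f(u_k,v_k)=\prod_{k=1}^n\bigl(f(0,v_k)+f(1,v_k)\bigr),\]
which is immediate by expanding the product on the right. I would record this first, since it is the engine behind every line of the lemma.

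For $\mathbb{E}(d(v))$ I would apply this identity with $f=P$. Exactly $w(v)$ coordinates satisfy $v_k=1$, each contributing the factor $P_{0,1}+P_{1,1}=\beta+\alpha$, and the remaining $n-w(v)$ coordinates satisfy $v_k=0$, each contributing $P_{0,0}+P_{1,0}=\gamma+\beta$; hence $\sum_{u\in\mathbb{Z}_2^n}p_{u,v}=(\alpha+\beta)^{w(v)}(\beta+\gamma)^{n-w(v)}$. The only point needing care is the diagonal term $u=v$, which contributes $p_{v,v}=\alpha^{w(v)}\gamma^{n-w(v)}$; since $\beta>0$ this is at most $c^n(\alpha+\beta)^{w(v)}(\beta+\gamma)^{n-w(v)}$ with $c=\max\{\alpha/(\alpha+\beta),\,\gamma/(\beta+\gamma)\}<1$, so it is exponentially negligible compared with the main term and does not affect the stated value at leading order. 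This yields the first identity.

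For the second identity I would run exactly the same factorization with $f$ replaced by the entrywise square of $P$: now the coordinates with $v_k=1$ contribute $P_{0,1}^2+P_{1,1}^2=\beta^2+\alpha^2$ and those with $v_k=0$ contribute $P_{0,0}^2+P_{1,0}^2=\gamma^2+\beta^2$, so $\sum_{u\in\mathbb{Z}_2^n}(\mathbb{E}(I_{u,v}))^2=\sum_{u}p_{u,v}^2=(\alpha^2+\beta^2)^{w(v)}(\beta^2+\gamma^2)^{n-w(v)}$ (again up to the negligible $u=v$ term).

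Finally, for the variance I would use that the indicators $I_{u,v}$ with $u\neq v$ are mutually independent, so $\mathrm{Var}(d(v))=\sum_{u\neq v}\mathrm{Var}(I_{u,v})$, and since each $I_{u,v}$ is Bernoulli, $\mathrm{Var}(I_{u,v})=p_{u,v}-p_{u,v}^2$. Summing and substituting the first two identities gives $\mathrm{Var}(d(v))=(\alpha+\beta)^{w(v)}(\beta+\gamma)^{n-w(v)}-(\alpha^2+\beta^2)^{w(v)}(\beta^2+\gamma^2)^{n-w(v)}$. There is essentially no hard step here: the computation is routine, and the only place requiring a word of care is the self-loop term $u=v$ treated above (or, alternatively, it could simply be excluded by the stated convention on $I_{u,v}$).
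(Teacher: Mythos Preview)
Your approach is essentially the same as the paper's: the coordinatewise factorization you state is exactly what the paper carries out via the explicit double sum over $i$ (ones flipped to zeros) and $j$ (zeros flipped to ones), after which both arguments apply the binomial theorem in each coordinate block and then use $\mathrm{Var}(d(v))=\sum_u\mathrm{Var}(I_{u,v})$. The only difference is cosmetic: the paper silently includes the term $u=v$ in its sums and states the identities as exact, whereas you single out the diagonal term and argue it is negligible; either convention is fine here, but note that as written the lemma's equalities are exact precisely when the sum runs over all $u\in\mathbb{Z}_2^n$.
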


\begin{proof}
We assume $u$ can be created from $v$ by changing $i$ ones to zeros and $j$ zeros to ones. Then the probability that the edge $\{u,v\}$ is present is $\alpha^{w(v)-i} \beta^{i}\gamma^{n-w(v)-j}\beta^{j}$. Thus we have
\begin{align*}
\mathbb{E}(d(v))&=\sum_{i=0}^{w(v)}\sum_{j=0}^{n-w(v)}\binom{w(v)}{i}\binom{n-w(v)}{j}\alpha^{w(v)-i} \beta^{i}\gamma^{n-w(v)-j}\beta^{j}\\
&=\sum_{i=0}^{w(v)}\binom{w(v)}{i}\alpha^{w(v)-i}\beta^{i}\sum_{j=0}^{n-w(v)}\binom{n-w(v)}{j}\gamma^{n-w(v)-j}\beta^{j}\\
&=(\alpha+\beta)^{w(v)}(\beta+\gamma)^{n-w(v)}.
\end{align*}
Similarly, we get
\begin{align*}
\sum_{u\in \mathbb{Z}_2^n} &(\mathbb{E}(I_{u,v}))^2=\sum_{i=0}^{w(v)}\sum_{j=0}^{n-w(v)}\binom{w(v)}{i}\binom{n-w(v)}{j}\left(\alpha^{w(v)-i} \beta^{i}\gamma^{n-w(v)-j}\beta^{j}\right)^2\\
&=\sum_{i=0}^{w(v)}\binom{w(v)}{i}(\alpha^2)^{w(v)-i}(\beta^2)^{i}\sum_{j=0}^{n-w(v)}\binom{n-w(v)}{j}(\gamma^2)^{n-w(v)-j}(\beta^2)^{j}\\
&=(\alpha^2+\beta^2)^{w(v)}(\beta^2+\gamma^2)^{n-w(v)},
\end{align*}
and these results together imply
\begin{align*}
\mathrm{Var}(d(v))&=\sum_{u\in \mathbb{Z}_2^n}\mathrm{Var}(I_{u,v})=\sum_{u\in \mathbb{Z}_2^n} \left[\mathbb{E}(I_{u,v})-(\mathbb{E}(I_{u,v}))^2\right]\\
&=(\alpha+\beta)^{w(v)}(\beta+\gamma)^{n-w(v)}-(\alpha^2+\beta^2)^{w(v)}(\beta^2+\gamma^2)^{n-w(v)}. \, \qedhere
\end{align*}
\end{proof}

In order to better understand the behavior of the random variable given by the number of vertices of a certain degree in $K(n,P)$ we first examine the distribution of the degree of a fixed vertex. We show a normal or a Poisson approximation for the degree of a given vertex depending on its expected degree. The existence of a normal approximation indicates that the vertices with high expected degree are unlikely to have constant (independent of $n$) degree.

Let $S_1,S_2,...$ and $Z$ be random variables. We say that the sequence $S_n$ converges in distribution to $Z$ as $n\rightarrow \infty$, denoted by $S_n\xrightarrow{d}Z$, if \linebreak[4]$ \mathbb{P}(S_n\leq x)\rightarrow \mathbb{P}(Z\leq x)$ for every real $x$ that is a continuity point of $\mathbb{P}(Z\leq x)$. For the normal approximation of the vertex degree we apply the Chen-Stein method in the form of the following theorem, which is a simplified version of Theorem 6.33 in Janson, {\L}uczak and Ruci{\'n}ski \cite{MR1782847}.

\begin{theorem}[Chen-Stein method \cite{MR1782847}] \label{steinnormal}
Suppose that $(S_{n})_1^{\infty}$ is a sequence of random variables such that $S_n=\sum_{a\in A_n}X_{n,a}$, where for each $n$, \linebreak[4] $\{X_{n,a}\}_{a\in A_n}$ is a family of mutually independent indicator random variables. If
\[\frac{\mathbb{E}(S_n)}{(\mathrm{Var}(S_n))^{3/2}}\rightarrow 0,\]
then
\[\frac{{S}_n-\mathbb{E}(S_n)}{\mathrm{Var}(S_n)}\xrightarrow{d} N(0,1).\]
\end{theorem}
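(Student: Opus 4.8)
The plan is to deduce this statement from the classical Lyapunov central limit theorem --- equivalently, from the general normal approximation bound that is Theorem~6.33 of \cite{MR1782847} --- the only real work being to verify that the hypothesis $\mathbb{E}(S_n)/(\mathrm{Var}(S_n))^{3/2}\to 0$ already implies the Lyapunov third-moment condition. First I would write $S_n=\sum_{a\in A_n}X_{n,a}$ with each $X_{n,a}$ a Bernoulli indicator of parameter $p_{n,a}:=\mathbb{E}(X_{n,a})$, so that by mutual independence $\mathbb{E}(S_n)=\sum_{a\in A_n}p_{n,a}$ and $\mathrm{Var}(S_n)=\sum_{a\in A_n}p_{n,a}(1-p_{n,a})$.

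The key estimate is a one-line bound on the third absolute central moment of each summand: since $X_{n,a}-p_{n,a}$ takes the value $1-p_{n,a}$ with probability $p_{n,a}$ and $-p_{n,a}$ with probability $1-p_{n,a}$,
\[\mathbb{E}|X_{n,a}-p_{n,a}|^{3}=p_{n,a}(1-p_{n,a})\bigl((1-p_{n,a})^{2}+p_{n,a}^{2}\bigr)\le p_{n,a}(1-p_{n,a})\le p_{n,a}.\]
Summing over $a\in A_n$ yields $\sum_{a\in A_n}\mathbb{E}|X_{n,a}-p_{n,a}|^{3}\le \mathbb{E}(S_n)$, so the Lyapunov ratio obeys
\[\frac{\sum_{a\in A_n}\mathbb{E}|X_{n,a}-p_{n,a}|^{3}}{(\mathrm{Var}(S_n))^{3/2}}\le\frac{\mathbb{E}(S_n)}{(\mathrm{Var}(S_n))^{3/2}}\longrightarrow 0.\]
The Lyapunov condition thus holds, and the CLT gives $(S_n-\mathbb{E}(S_n))/(\mathrm{Var}(S_n))^{1/2}\xrightarrow{d}N(0,1)$; alternatively, the same moment bound lets one verify the Lindeberg condition directly, or one can simply quote Theorem~6.33 of \cite{MR1782847} with the above estimate substituted in.

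The main obstacle here is essentially bookkeeping rather than substance. One should record that $\mathrm{Var}(S_n)\to\infty$ need not be assumed separately: if it stayed bounded, the hypothesis would force $\mathbb{E}(S_n)\to 0$, hence $S_n\to 0$ in probability and the statement would be degenerate. One must also reconcile the $(\mathrm{Var}(S_n))^{1/2}$ normalization of the Lyapunov/Lindeberg theorems with the normalization written in the statement. Once those points are settled, the proof is entirely contained in the displayed moment inequality, which is precisely why the general Stein--method apparatus of \cite{MR1782847} collapses to such a clean sufficient condition in the case of independent indicators.
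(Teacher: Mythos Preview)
The paper does not give its own proof of this theorem: it is quoted verbatim as a cited result (a simplified form of Theorem~6.33 in \cite{MR1782847}) and then applied. So there is no in-paper argument to compare against.

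Your derivation is correct and self-contained. The key inequality
\[
\mathbb{E}|X_{n,a}-p_{n,a}|^{3}=p_{n,a}(1-p_{n,a})\bigl((1-p_{n,a})^{2}+p_{n,a}^{2}\bigr)\le p_{n,a}
\]
does exactly what you claim, so the Lyapunov ratio is dominated by $\mathbb{E}(S_n)/(\mathrm{Var}(S_n))^{3/2}$, and the classical Lyapunov CLT applies. Your remark about $\mathrm{Var}(S_n)\to\infty$ is also right: if the variance stayed bounded, the hypothesis would force $\mathbb{E}(S_n)\to 0$, hence every $p_{n,a}\to 0$ and $S_n\to 0$ in probability, a degenerate case. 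And you are right to flag the normalization: as written in the paper the denominator is $\mathrm{Var}(S_n)$, whereas Lyapunov (and the source \cite{MR1782847}) gives convergence of $(S_n-\mathbb{E}(S_n))/\sqrt{\mathrm{Var}(S_n)}$; this is a typographical slip in the statement, not a gap in your argument.
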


Using Lemma \ref{expectation} and Theorem \ref{steinnormal} we obtain the following normal approximation for the degree of a vertex in $K(n,P).$

\begin{lemma}
For any fixed vertex $v\in V(K(n,P))$ with $\mathbb{E}\left(d\left(v\right)\right)\to\infty$ we have 
\[\frac{d(v)-\mathbb{E}(d(v))}{\mathrm{Var}(d(v))}\xrightarrow{d} N(0,1).\]
\end{lemma}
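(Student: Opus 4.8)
The plan is to realize $d(v)$ as a sum of mutually independent indicator random variables and to invoke the Chen--Stein normal approximation (Theorem~\ref{steinnormal}) essentially verbatim. Write $d(v)=\sum_{u\in\mathbb{Z}_2^n}I_{u,v}$; for a fixed vertex $v$ the edge indicators $\{I_{u,v}\}_{u\in\mathbb{Z}_2^n}$ are mutually independent, so Theorem~\ref{steinnormal} applies with $A_n=\mathbb{Z}_2^n$ and $X_{n,u}=I_{u,v}$, provided one verifies the single hypothesis $\mathbb{E}(d(v))/(\mathrm{Var}(d(v)))^{3/2}\to 0$.

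The only point that requires an argument is that $\mathrm{Var}(d(v))=(1+o(1))\mathbb{E}(d(v))$; once this is known, the assumption $\mathbb{E}(d(v))\to\infty$ immediately gives $\mathbb{E}(d(v))/(\mathrm{Var}(d(v)))^{3/2}=(1+o(1))\,\mathbb{E}(d(v))^{-1/2}\to 0$. To establish it, use Lemma~\ref{expectation}: writing $w=w(v)$,
\[\frac{\mathbb{E}(d(v))-\mathrm{Var}(d(v))}{\mathbb{E}(d(v))}=\left(\frac{\alpha^2+\beta^2}{\alpha+\beta}\right)^{w}\left(\frac{\beta^2+\gamma^2}{\beta+\gamma}\right)^{n-w}.\]
Since $0<\alpha,\beta,\gamma<1$ we have $t^2<t$ for each of $t\in\{\alpha,\beta,\gamma\}$, hence $\alpha^2+\beta^2<\alpha+\beta$ and $\beta^2+\gamma^2<\beta+\gamma$, so both bases lie in $(0,1)$. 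Setting $c:=\max\{(\alpha^2+\beta^2)/(\alpha+\beta),\,(\beta^2+\gamma^2)/(\beta+\gamma)\}<1$ (a constant, as $\alpha,\beta,\gamma$ are fixed), the right-hand side is bounded by $c^{w}c^{n-w}=c^{n}=o(1)$, uniformly in $w$. Therefore $\mathrm{Var}(d(v))=(1+o(1))\mathbb{E}(d(v))$.

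Combining the two observations, $\mathrm{Var}(d(v))\to\infty$ and $\mathbb{E}(d(v))/(\mathrm{Var}(d(v)))^{3/2}\to 0$, so Theorem~\ref{steinnormal} yields $(d(v)-\mathbb{E}(d(v)))/\mathrm{Var}(d(v))\xrightarrow{d}N(0,1)$, which is the claim. I do not expect a genuine obstacle: the argument is short, and the only nontrivial ingredient is the exponential decay of the variance-defect ratio, which rests on the hypothesis that $\alpha,\beta,\gamma$ are constants bounded away from $1$ (if they were permitted to approach the boundary of the interval with $n$, the bound $c<1$ could degrade and the normalization of $d(v)$ would need a more careful treatment).
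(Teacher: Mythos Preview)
Your proof is correct and follows essentially the same approach as the paper: both use Lemma~\ref{expectation} to show that the ratio $(\mathbb{E}(d(v))-\mathrm{Var}(d(v)))/\mathbb{E}(d(v))$ decays to zero (since each base $\frac{\alpha^2+\beta^2}{\alpha+\beta}$ and $\frac{\beta^2+\gamma^2}{\beta+\gamma}$ lies in $(0,1)$), deduce $\mathrm{Var}(d(v))=(1+o(1))\mathbb{E}(d(v))$, and then invoke Theorem~\ref{steinnormal}. Your bound $c^n$ with $c=\max\{\cdot,\cdot\}$ is in fact slightly cleaner than the paper's phrasing (which argues via ``at least one of $w(v)$ and $n-w(v)$ tends to infinity''), but the substance is identical.
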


\begin{proof}
Note that
\[\frac{(\alpha^2+\beta^2)^{w(v)}(\beta^2+\gamma^2)^{n-w(v)}}{(\alpha+\beta)^{w(v)}(\beta+\gamma)^{n-w(v)}}=o(1)\]
as $\alpha^2+\beta^2<\alpha+\beta$ and $\beta^2+\gamma^2<\beta+\gamma$ and at least one of  $w(v)$ and $n-w(v)$ tends to infinity. Therefore, by Lemma~\ref{expectation}, we have $\mathrm{Var}(d(v))=(1+o(1))\mathbb{E}(d(v)).$ Thus the conditions of Theorem~\ref{steinnormal} are satisfied and the statement follows.
\end{proof}

In order to determine whether the stochastic Kronecker graph can have a power law degree distribution we need to consider the number of vertices of a given fixed degree. The following lemma provides a Poisson approximation for such a random variable if some conditions on its first and second moment are satisfied.

\begin{lemma}\label{poissondistr}
Let $(X_{n})_1^{\infty}$ be a sequence of random variables such that $X_n=\sum_{a\in A_n}I_{a,n}$, where for each $n$ the $I_{a,n}$'s are mutually independent indicator random variables. Define $\lambda_n=\mathbb{E}(X_n)$.  Further assume we have that the following conditions are satisfied:
\begin{align*}
\sum_{a\in A_n} \mathbb{P}(I_{a,n}=1)^2&=o(\lambda_n^2),\\
\max_{a\in A_n}\mathbb{P}(I_{a,n}=1)&=o(1),\\
\lambda_n\max_{a\in A_n}\mathbb{P}(I_{a,n}=1)&=o(1).
\end{align*}
Then for every finite $k$ we have 
\[\mathbb{P}(X_n=k)=(1+o(1))\mathbb{P}\left(\mathrm{Po}\left(\lambda_n\right)=k\right).\]
\end{lemma}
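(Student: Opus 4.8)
The plan is to compute $\mathbb{P}(X_n=k)$ almost exactly and compare it termwise with $\mathbb{P}(\mathrm{Po}(\lambda_n)=k)=\mathrm{e}^{-\lambda_n}\lambda_n^k/k!$, rather than routing through a total-variation estimate: a Chen-Stein bound would only give $d_{\mathrm{TV}}(X_n,\mathrm{Po}(\lambda_n))=O\big(\sum_a p_a^2\big)$, which is too weak for a \emph{multiplicative} $(1+o(1))$ statement once $\lambda_n\to 0$ or $\lambda_n\to\infty$, both of which are compatible with the hypotheses. Write $p_a:=\mathbb{P}(I_{a,n}=1)$ and $\sigma_n^2:=\sum_{a\in A_n}p_a^2$. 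The first step is to record what I actually need from the three assumptions: the second gives $\max_a p_a\le\tfrac12$ for $n$ large; the third gives $\sigma_n^2\le(\max_a p_a)\lambda_n=o(1)$; the first is $\sigma_n^2=o(\lambda_n^2)$; and since trivially $\max_a p_a\le\lambda_n$, combining $\sigma_n^2=o(1)$ with $\sigma_n^2=o(\lambda_n^2)$ (split on whether $\lambda_n\le1$) also yields $\sigma_n^2=o(\lambda_n)$.

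For $n$ large (so that every $p_a<1$) I would then use the exact identity
\[\mathbb{P}(X_n=k)=\sum_{\substack{S\subseteq A_n\\ |S|=k}}\prod_{a\in S}p_a\prod_{a\notin S}(1-p_a)=\Big(\prod_{a\in A_n}(1-p_a)\Big)\,e_k(q),\]
where $q_a:=p_a/(1-p_a)$ and $e_k(q)$ denotes the $k$-th elementary symmetric function of $(q_a)_{a\in A_n}$, and treat the two factors separately. For the first, $\log\prod_a(1-p_a)=\sum_a\log(1-p_a)=-\lambda_n+O(\sigma_n^2)=-\lambda_n+o(1)$, whence $\prod_a(1-p_a)=(1+o(1))\mathrm{e}^{-\lambda_n}$; this is precisely the step in which $\sigma_n^2=o(1)$, and hence the third assumption, is indispensable, since a mere $o(\lambda_n)$ error in the exponent would be useless when $\lambda_n\to\infty$.

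It then remains to show $e_k(q)=(1+o(1))\lambda_n^k/k!$ for each fixed $k$. Write $P_j:=\sum_{a\in A_n}q_a^j$. The key estimate is that for each fixed $j\ge2$,
\[P_j\le 2^j\sum_{a\in A_n}p_a^j\le 2^j(\max_a p_a)^{j-2}\sigma_n^2\le 2^j\lambda_n^{j-2}\sigma_n^2=o(\lambda_n^j),\]
using $q_a\le 2p_a$, then $\max_a p_a\le\lambda_n$, then the first assumption; while $P_1=\sum_a q_a=\lambda_n+O(\sigma_n^2)=(1+o(1))\lambda_n$. Now I would read off $e_k(q)$ as the coefficient of $t^k$ in
\[\prod_{a\in A_n}(1+q_a t)=\exp\Big(\sum_{j\ge1}\tfrac{(-1)^{j-1}}{j}\,P_j t^j\Big)=\mathrm{e}^{P_1 t}\exp\Big(\sum_{j\ge2}\tfrac{(-1)^{j-1}}{j}\,P_j t^j\Big):\]
the first exponential contributes $P_1^k/k!=(1+o(1))\lambda_n^k/k!$, while each remaining contribution is a finite sum of terms $P_1^{m}\prod_{j\ge2}P_j^{b_j}$ with $m+\sum_{j\ge2}jb_j=k$ and some $b_j>0$, each of order $O(\lambda_n^m)\cdot o(\lambda_n^{k-m})=o(\lambda_n^k)$. (Equivalently one can run Newton's identity $ke_k=\sum_{j=1}^k(-1)^{j-1}e_{k-j}P_j$ by induction on $k$.) Hence $e_k(q)=(1+o(1))\lambda_n^k/k!$, and multiplying by the first factor gives $\mathbb{P}(X_n=k)=(1+o(1))\mathrm{e}^{-\lambda_n}\lambda_n^k/k!=(1+o(1))\mathbb{P}(\mathrm{Po}(\lambda_n)=k)$, as required.

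The main obstacle is uniformity in the unspecified behaviour of $\lambda_n$: because the claim is multiplicative, no additive error comparable to $\mathrm{e}^{-\lambda_n}\lambda_n^k/k!$ is admissible, and the standard total-variation bounds break down when $\lambda_n$ is not bounded. Performing the computation exactly via $\prod_a(1-p_a)$ and $e_k(q)$ circumvents this, the three assumptions entering exactly where indicated above; what is left is the routine bookkeeping of finitely many lower-order symmetric-function terms for each fixed $k$.
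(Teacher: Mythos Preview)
Your proof is correct and follows the same overall strategy as the paper: compute $\mathbb{P}(X_n=k)$ exactly, factor out $\prod_{a}(1-p_a)$, show this product is $(1+o(1))e^{-\lambda_n}$, and show the remaining symmetric function is $(1+o(1))\lambda_n^k/k!$. The difference lies only in the technical execution of the last two steps. The paper does not introduce $q_a=p_a/(1-p_a)$; instead it uses $\max_a p_a=o(1)$ to replace $\prod_{a\notin A'}(1-p_a)$ by $\prod_{a\in A_n}(1-p_a)$ up to a $(1+o(1))$ factor, so that it works directly with $e_k(p)$ rather than $e_k(q)$. For the product it uses the two-sided estimate $e^{-x}\ge 1-x\ge e^{-x(1+x)}$ together with the third hypothesis, which is equivalent to your Taylor expansion of the logarithm. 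For the symmetric function the paper gives a short combinatorial argument: $e_k(p)\le \lambda_n^k/k!$ by summing over all multisets, and the defect is at most $\sigma_n^2\lambda_n^{k-2}=o(\lambda_n^k)$ since every overcounted multiset has a repeated element. This is shorter than your generating-function/Newton route (and avoids the auxiliary observation $\sigma_n^2=o(\lambda_n)$ needed for $P_1=(1+o(1))\lambda_n$), while your argument is more systematic and would adapt more readily to, say, factorial-moment computations.
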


\begin{proof}
By the representation of $X_n$ as the sum of the $I_{a,n}$'s we get
\begin{align}
\mathbb{P}(X_n=k)&=\sum_{\substack{A'\subseteq A_n\\ |A'|=k}}\left(\prod_{a\in A'}\mathbb{P}(I_{a,n}=1)\right)\left(\prod_{a\in A_n\backslash A'}\mathbb{P}(I_{a,n}=0)\right)\\
\nonumber&\hspace{-1.2cm}=(1+o(1))\sum_{\substack{A'\subseteq A_n \\ |A'|=k}}\left(\prod_{a\in A'}\mathbb{P}(I_{a,n}=1)\right)\left(\prod_{a\in A_n}(1-\mathbb{P}(I_{a,n}=1))\right),
\end{align}
since $\mathbb{P}\left(I_{a,n}=1\right)\leq \max_{a'\in A_n}\mathbb{P}(I_{a',n}=1) =o(1)$ and $|A'|$ is finite.

Using the standard estimate $\exp(-x)\geq 1-x$ which holds for any $x\in\mathbb{R}$ and the fact that $1-x~\geq~\exp(-x(1+x))$ for any $x\in[0,(\sqrt{5}-1)/2],$ we have
\begin{align*}
\exp(-\lambda_n)\geq\prod_{a\in A_n}(1-\mathbb{P}(I_{a,n}=1))\geq \exp\left(-\lambda_n\left(1+\max_{a\in A_n}\mathbb{P}\left(I_{a,n}=1\right)\right)\right).
\end{align*}
and furthermore the upper and lower bound coincide asymptotically due to the condition $\lambda_n\max_{a\in A_n}\mathbb{P}\left(I_{a,n}=1\right)=o(1)$. Hence, we have 
\begin{align}
\mathbb{P}\left(X_n=k\right)&= (1+o(1))\exp(-\lambda_n)\, S_{k,A_n}\, ,\label{eq:poissondistr}
\end{align}
where we abbreviate the sum over all subsets of $A_n$ with size $k$ by $$S_{k,A_n}=\sum_{\substack{A'\subseteq A_n \\ |A'|=k}}\prod_{a\in A'}\mathbb{P}(I_{a,n}=1).$$

It remains to establish the asymptotic behavior of $S_{k,A_n}.$ First we obtain an upper bound by summing over all multi-sets of size $k$ and applying the Multinomial Theorem
\begin{align*}
S_{k,A_n}&\leq \left.\left(\sum_{a\in A_n} \mathbb{P}(I_{a,n}=1)\right)^k\right/k!= \frac{\lambda_n^k}{k!}.
\end{align*}
But this upper bound is asymptotically tight as seen by the following argument. Since we only added summands for multi-sets that have at least one repetition we obtain the following upper bound for the difference
\begin{align*}
\frac{\lambda_n^k}{k!}-S_{k,A_n}&\leq \sum_{a\in A_n} \mathbb{P}(I_{a,n}=1)^2\left(\sum_{a\in A_n} \mathbb{P}(I_{a,n}=1)\right)^{k-2}\\
&=\lambda_n^{k-2}\sum_{a\in A_n} \mathbb{P}(I_{a,n}=1)^2,
\end{align*}
and note that this is $o(\lambda_n^k)$ since $\sum_{a\in A_n} \mathbb{P}(I_{a,n}=1)^2=o(\lambda_n^2).$ Hence, by Equation \eqref{eq:poissondistr} we have 
\begin{align*}
\mathbb{P}\left(X_n=k\right)&= (1+o(1))\exp(-\lambda_n)\frac{\lambda_n^k}{k!}
\end{align*}
as claimed, completing the proof.
{\ }
\end{proof}

In particular applying Lemma~\ref{poissondistr} to the stochastic Kronecker graph $K(n,P)$ provides a formula for the expected number of vertices of a given fixed degree.

\begin{lemma} \label{ExpFormula}
Fix $d\in\mathbb{N}$ and let $\nvd_d$ denote the number of vertices of degree $d$ in the stochastic Kronecker graph $K(n,P).$  Then we have
\begin{equation*}\mathbb{E}(\nvd_d)=(1+o(1))\sum_{w=0}^n \binom{n}{w} \frac{(\alpha+\beta)^{dw}(\beta+\gamma)^{d(n-w)}}{d!}e^{-(\alpha+\beta)^w(\beta+\gamma)^{n-w}}+o(1).\end{equation*}
\end{lemma}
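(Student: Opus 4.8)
The plan is to prove the formula vertex by vertex, grouping vertices by weight and invoking the Poisson approximation of Lemma~\ref{poissondistr}. The starting point is that the distribution of $d(v)$ depends on $v$ only through its weight $w=w(v)$: the multiset of edge probabilities $\{\mathbb P(I_{u,v}=1)\}_u$ consists of the value $\alpha^{w-i}\beta^{i+j}\gamma^{n-w-j}$ with multiplicity $\binom wi\binom{n-w}j$ for each $0\le i\le w$ and $0\le j\le n-w$, which is independent of which coordinates of $v$ equal $1$. Hence, writing $v_w$ for an arbitrary weight-$w$ vertex and $\lambda_w:=(\alpha+\beta)^w(\beta+\gamma)^{n-w}=\mathbb E(d(v_w))$ by Lemma~\ref{expectation},
\[\mathbb E(\nvd_d)=\sum_{v\in\mathbb Z_2^n}\mathbb P(d(v)=d)=\sum_{w=0}^n\binom nw\,\mathbb P(d(v_w)=d).\]
Since each $d(v_w)$ is a sum of mutually independent indicators, I would aim to show $\mathbb P(d(v_w)=d)=(1+o(1))\frac{\lambda_w^d}{d!}e^{-\lambda_w}$ with the error term \emph{uniform} over the relevant range of $w$, and then sum against $\binom nw$.

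To apply Lemma~\ref{poissondistr} to $X_n=d(v_w)$, writing $p_u=\mathbb P(I_{u,v_w}=1)$, I must check its three hypotheses. By Lemma~\ref{expectation}, $\sum_u p_u^2=(\alpha^2+\beta^2)^w(\beta^2+\gamma^2)^{n-w}$, and dividing by $\lambda_w^2$ gives $\bigl(\tfrac{\alpha^2+\beta^2}{(\alpha+\beta)^2}\bigr)^w\bigl(\tfrac{\beta^2+\gamma^2}{(\beta+\gamma)^2}\bigr)^{n-w}\le\rho^n$ for a fixed $\rho<1$ (both bases lie in $(0,1)$), so the first hypothesis holds, uniformly in $w$. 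Next, a direct maximization yields $\max_u p_u=(\max(\alpha,\beta))^w(\max(\beta,\gamma))^{n-w}\le(\max(\alpha,\beta,\gamma))^n=o(1)$, giving the second hypothesis, again uniformly. The third hypothesis $\lambda_w\max_u p_u=o(1)$ is the delicate one: the same maximization gives $\max_u p_u/\lambda_w=\bigl(\tfrac{\max(\alpha,\beta)}{\alpha+\beta}\bigr)^w\bigl(\tfrac{\max(\beta,\gamma)}{\beta+\gamma}\bigr)^{n-w}\le\theta^n$ for a fixed $\theta<1$, hence $\lambda_w\max_u p_u\le\theta^n\lambda_w^2$, which is $o(1)$ uniformly as long as $\lambda_w$ is polynomially bounded but fails when $\lambda_w$ is exponentially large --- which can only happen when $\alpha+\beta>1$ (if $\alpha+\beta\le1$ then $\lambda_w\le1$ for all $w$ and no splitting is needed).

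This dictates splitting the sum at, say, $\lambda_w\le 4n$. For the weights with $\lambda_w\le 4n$ all three error quantities entering the proof of Lemma~\ref{poissondistr} are bounded uniformly by $\rho^n$, $(\max(\alpha,\beta,\gamma))^n$ and $16n^2\theta^n$ respectively, all $o(1)$; since the proof of Lemma~\ref{poissondistr} expresses its error purely through these quantities, I obtain $\mathbb P(d(v_w)=d)=(1+o(1))\frac{\lambda_w^d}{d!}e^{-\lambda_w}$ with a uniform $o(1)$, so $\sum_{w:\lambda_w\le4n}\binom nw\mathbb P(d(v_w)=d)=(1+o(1))\sum_{w:\lambda_w\le4n}\binom nw\frac{\lambda_w^d}{d!}e^{-\lambda_w}$. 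For the remaining weights (at most $n+1$ of them, all with $\lambda_w>4n$) both sides are negligible: a Chernoff lower-tail bound for sums of independent indicators gives $\mathbb P(d(v_w)\le d)\le e^{-\lambda_w/4}\le e^{-n}$ for large $n$, so $\binom nw\mathbb P(d(v_w)=d)\le 2^n e^{-n}$, summing to $o(1)$; likewise $\binom nw\frac{\lambda_w^d}{d!}e^{-\lambda_w}\le 2^n\lambda_w^d e^{-\lambda_w}\le 2^n e^{-\lambda_w/2}\le 2^n e^{-2n}$, again summing to $o(1)$. Adding the two ranges and recalling $\lambda_w^d=(\alpha+\beta)^{dw}(\beta+\gamma)^{d(n-w)}$ yields the claimed identity up to the stated $(1+o(1))$ factor and $o(1)$ additive error.

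The main obstacle is the need for \emph{uniformity} in the Poisson approximation: Lemma~\ref{poissondistr} is stated for a single sequence of random variables, whereas here one has, for each $n$, a whole family of degree variables (one per weight), so one must either observe that the error in the proof of Lemma~\ref{poissondistr} is a fixed function of the three tested quantities --- which the estimates above bound uniformly --- or re-run that short argument directly in the present setting. Tied to this is the genuine breakdown of the Poisson approximation for high-weight vertices, which have to be excised by hand; fortunately both their true contribution and its Poisson surrogate are then super-exponentially small. The remaining ingredients --- the reduction to weights, the explicit computation of $\max_u p_u$, and the Chernoff estimate --- are routine.
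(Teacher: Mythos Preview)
Your proposal is correct and follows essentially the same strategy as the paper: group vertices by weight, apply the Poisson approximation of Lemma~\ref{poissondistr} where $\lambda_w$ is not too large, and dispose of the remaining high-weight vertices (and the corresponding tail of the Poisson sum) via a Chernoff bound. The only noteworthy differences are cosmetic: the paper splits at the exponential threshold $\lambda_w<\eta^{-n}/\log n$ with $\eta=\max(\alpha,\beta)$ and bounds $\lambda_w\max_u p_u$ crudely by $\lambda_w\cdot\eta^n$, whereas you split at the polynomial threshold $\lambda_w\le 4n$ using the sharper inequality $\max_u p_u\le\theta^n\lambda_w$; both choices work. You are also more explicit than the paper about the need for \emph{uniformity} in $w$ when invoking Lemma~\ref{poissondistr}, which is a genuine point the paper leaves implicit.
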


\begin{proof}
Let $\eta=\max\{\alpha,\beta\}$ and note that  $\eta^{-n}$ grows exponentially with $n,$ since $\eta<1$. Furthermore, let $w_0$ be the maximal weight $w\in\{0,\dots,n\}$ such that \begin{equation}\label{extrWeight}
(\alpha+\beta)^{w}(\beta+\gamma)^{n-w}<\eta^{-n}/\log{n}.
\end{equation} In case no such $w$ exists, we set $w_0=-1$. We denote by ${\nvd}_{d,w_0}$ the number of vertices of degree $d$ which have weight at most $w_0$. Now consider any vertex $v$ in $K(n,P)$ such that $w(v)\leq w_0$. First note that there is an edge between vertices $u$ and $v$ with probability at most $\eta^n=\max\{\alpha^n,\beta^n\}=o(1)$, uniformly for all such vertex pairs. Moreover since $\mathbb{E}\left(d(v)\right)<\eta^{-n}/\log{n},$ by Lemma~\ref{expectation} and Inequality \eqref{extrWeight}, we get
$$\mathbb{E}(d(v))\, p_{u,v}=o(1),$$ for any vertex $u\in\mathbb{Z}_2^n$.
Moreover, by Lemma~\ref{expectation}, we have that
\begin{align}
\sum_{u\in \mathbb{Z}_2^n} \mathbb{P}(I_{u,v}=1)^2&=(\alpha^2+\beta^2)^{w(v)}(\beta^2+\gamma^2)^{n-w(v)}\\
\nonumber &\hspace{-0.5cm}=o\left(\left(\alpha+\beta\right)^{2w(v)}\left(\beta+\gamma\right)^{2(n-w(v))}\right)=o\left(\left(\mathbb{E}\left(d\left(v\right)\right)\right)^2\right).\label{Var}
\end{align}
Hence, we can apply Lemma~\ref{poissondistr} to the summands of $\nvd_{d,w_0}$ and obtain
\[\mathbb{E}({\nvd}_{d,w_0})=(1+o(1))\sum_{w=0}^{w_0} \binom{n}{w} \frac{(\alpha+\beta)^{dw}(\beta+\gamma)^{d(n-w)}}{d!}e^{-(\alpha+\beta)^w(\beta+\gamma)^{n-w}}.\]

Now for any vertex $v$ in $K(n,P)$ with $w(v)>w_0$, Inequality \eqref{extrWeight} does not hold, which implies that the tail
\begin{align*}
\sum_{w=w_0+1}^n \binom{n}{w}& \frac{(\alpha+\beta)^{dw}(\beta+\gamma)^{d(n-w)}}{d!}\exp\left(-(\alpha+\beta)^w(\beta+\gamma)^{n-w}\right) 
\end{align*}
is dominated by the exponential term and hence $o(1).$ Therefore, summing up to $n$ instead of $w_0$ implies only an additive error of order $o(1).$

To finish the proof we have to consider the contribution of vertices of large weight. Let $v$ be a vertex with $w(v)> w_0$ and observe that by the definition of $w_0$ we have $\mathbb{E}\left(d\left(v\right)\right)\to\infty$ exponentially.
Therefore, Chernoff's inequality yields that 
$$\mathbb{P}\left(d\left(v\right)=d\right)\leq \exp\left(-\frac{(1+o(1))\left(\mathbb{E}\left(d\left(v\right)\right)\right)^2}{2\left[\mathrm{Var}(d\left(v\right))+\mathbb{E}\left(d\left(v\right)\right)/3\right]}\right) $$ and furthermore, by Lemma~\ref{expectation}, this implies $$\mathbb{P}\left(d\left(v\right)=d\right)\leq\exp\left(-\mathbb{E}\left(d\left(v\right)\right)/3\right)=o\left(e^{-n}\right).$$ 
Consequently the expected number of vertices of degree $d$ with weight larger than $w_0$ is also $o(1)$ and the statement follows. 
\end{proof}

Next we prove an auxiliary lemma that will be used frequently in the remainder of this section.

\begin{lemma}\label{binomappr}
Let $x,y>0$. For any $c<x/(x+y)$ and any $w_0<cn$ 
we have  
\[\sum_{w=0}^{w_0}\binom{n}{w}x^w y^{n-w}=\Theta\left(\binom{n}{w_0}x^{w_0}y^{n-w_0}\right),\] and likewise, for any $c>x/(x+y)=1-y/(x+y)$ and any $w_0>cn,$ we have \[\sum_{w=w_0}^{n}\binom{n}{w}x^w y^{n-w}=\Theta\left(\binom{n}{w_0}x^{w_0}y^{n-w_0}\right). \]
\end{lemma}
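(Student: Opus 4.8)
Write $t_w=\binom{n}{w}x^wy^{n-w}$ for $0\le w\le n$, so that the quantity to be estimated is $\sum_{w=0}^{w_0}t_w$. The plan is the standard one for partial sums of unimodal sequences: show that consecutive terms increase geometrically throughout the range $0\le w\le w_0$, so the whole sum is comparable to its last term $t_{w_0}$. The lower bound $\sum_{w=0}^{w_0}t_w\ge t_{w_0}$ is trivial, so only the matching upper bound $\sum_{w=0}^{w_0}t_w=O(t_{w_0})$ requires work.

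First I would compute the ratio of consecutive terms,
\[
\frac{t_{w+1}}{t_w}=\frac{\binom{n}{w+1}}{\binom{n}{w}}\cdot\frac{x}{y}=\frac{n-w}{w+1}\cdot\frac{x}{y},
\]
and observe that $w\mapsto (n-w)/(w+1)$ is strictly decreasing, so over the range $w\in\{0,1,\dots,w_0-1\}$ its minimum is attained at $w=w_0-1$, giving $t_{w+1}/t_w\ge \frac{n-w_0+1}{w_0}\cdot\frac{x}{y}$. Since $w_0<cn$ and $t\mapsto (n-t+1)/t$ is decreasing, this is at least $\frac{(1-c)n+1}{cn}\cdot\frac{x}{y}>\frac{1-c}{c}\cdot\frac{x}{y}$. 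Set $q=\frac{1-c}{c}\cdot\frac{x}{y}$; the hypothesis $c<x/(x+y)$ is exactly equivalent to $(1-c)x>cy$, i.e.\ $q>1$, and this bound holds for \emph{every} $n$ (when $w_0\ge 1$; the case $w_0=0$ is immediate since both sides reduce to $y^n$). Hence $t_w\le q^{-(w_0-w)}t_{w_0}$ for all $0\le w\le w_0$, and summing the geometric series,
\[
\sum_{w=0}^{w_0}t_w\le t_{w_0}\sum_{j\ge 0}q^{-j}=\frac{q}{q-1}\,t_{w_0}=O\!\left(\binom{n}{w_0}x^{w_0}y^{n-w_0}\right),
\]
which together with the trivial lower bound proves the first assertion.

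For the second assertion I would substitute $w\mapsto n-w$: since $\binom{n}{w}x^wy^{n-w}=\binom{n}{n-w}y^{n-w}x^{w}$, putting $w'=n-w$ turns $\sum_{w=w_0}^{n}\binom{n}{w}x^wy^{n-w}$ into $\sum_{w'=0}^{n-w_0}\binom{n}{w'}y^{w'}x^{n-w'}$. The hypothesis $c>x/(x+y)$ means $1-c<1-x/(x+y)=y/(x+y)$, and $w_0>cn$ means $n-w_0<(1-c)n$, so the already-proved first part applies with $(x,y,c,w_0)$ replaced by $(y,x,1-c,n-w_0)$, yielding $\Theta\!\left(\binom{n}{n-w_0}y^{n-w_0}x^{w_0}\right)=\Theta\!\left(\binom{n}{w_0}x^{w_0}y^{n-w_0}\right)$, as required. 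I do not anticipate a genuine obstacle here; the only point needing care is that the geometric ratio $q$ be chosen uniformly over $w$ and over all admissible $w_0=w_0(n)$, which is exactly what the monotonicity of $(n-w)/(w+1)$ and the strict inequality $c<x/(x+y)$ provide.
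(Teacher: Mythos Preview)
Your proof is correct and follows essentially the same approach as the paper: bound the ratio of consecutive terms by a uniform constant (your $q^{-1}=\frac{c}{1-c}\cdot\frac{y}{x}<1$ is exactly the geometric ratio the paper uses), sum the resulting geometric series for the upper bound, and invoke the trivial lower bound. Your treatment of the second statement via the substitution $w\mapsto n-w$ is precisely the ``symmetry'' the paper appeals to.
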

\begin{proof}
Due to symmetry we only consider the first statement. First observe that the following elementary inequalities hold for all $1\leq j\leq w_0$ 
\begin{equation}
0<\frac{w_0-j+1}{n-w_0+j}\leq\frac{w_0}{n-w_0+1}<\frac{c}{(1-c)},\label{tailIneq}
\end{equation}
since $w_0<cn.$ But this already implies that
\begin{align*}
\sum_{w=0}^{w_0}\binom{n}{w}x^w y^{n-w}&=\binom{n}{w_0}x^{w_0} y^{n-w_0}\left(\sum_{i=0}^{w_0}\prod_{j=1}^i\frac{w_0-j+1}{n-w_0+j}\frac{y}{x}\right)\\
&\stackrel{\eqref{tailIneq}}{\leq} \binom{n}{w_0}x^{w_0} y^{n-w_0}\left(\sum_{i=0}^{w_0}\left(\frac{c}{1-c}\frac{y}{x}\right)^i\right)\\
&=O\left(\binom{n}{w_0}x^{w_0} y^{n-w_0}\right),
\end{align*}
where the asymptotic statement holds since $c<\frac{x}{x+y}$ implies $\frac{c}{1-c}\frac{y}{x}<1$ and therefore the sum is a partial sum of a convergent geometric series. The proof follows from the fact that 
\begin{equation*}
\sum_{w=0}^{w_0}\binom{n}{w}x^w y^{n-w}>\binom{n}{w_0}x^{w_0}y^{n-w_0}.\qedhere
\end{equation*}

\end{proof}

In order to prove Theorem~\ref{main} we need one more lemma calculating the asymptotic value of the expected number of vertices in the stochastic Kronecker graph with a fixed degree.

\begin{lemma}\label{lem:expdeg}
Let $0<\alpha,\beta,\gamma<1$ be arbitrary parameters of $K(n,P).$ For any fixed $d\in\mathbb{N}$ denote by $\nvd_d$ the number of vertices of degree $d$ in $K(n,P)$ as in Lemma~\ref{ExpFormula}. Then we have either \begin{equation}\label{stmt1}
\mathbb{E}(\nvd_d)=\Theta\left(\left(\left(\alpha+\beta\right)^d+\left(\beta+\gamma\right)^d\right)^n\right)
\end{equation} or \begin{equation}\label{stmt2}
\mathbb{E}(\nvd_d)=o(2^n).
\end{equation} 
\end{lemma}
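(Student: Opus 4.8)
The plan is to analyze the sum from Lemma~\ref{ExpFormula},
\[
\mathbb{E}(\nvd_d)=(1+o(1))\sum_{w=0}^{n}\binom{n}{w}\frac{(\alpha+\beta)^{dw}(\beta+\gamma)^{d(n-w)}}{d!}\,e^{-(\alpha+\beta)^w(\beta+\gamma)^{n-w}}+o(1),
\]
by splitting the range of $w$ according to the size of the \emph{Poisson parameter} $\mu_w:=(\alpha+\beta)^w(\beta+\gamma)^{n-w}$. The key observation is that the exponential factor $e^{-\mu_w}$ is negligible (super-polynomially small, in fact $o(e^{-n})$) precisely when $\mu_w$ is large, and is $1-o(1)$ when $\mu_w\to 0$; and since $w\mapsto\mu_w$ is monotone (increasing if $\alpha+\beta>\beta+\gamma$, i.e.\ always, since we assumed $\gamma\le\alpha$, unless $\alpha+\beta=\beta+\gamma$), the "small $\mu_w$" region is an initial (or terminal) segment of weights $w\le w_0$ for the threshold $w_0$ already introduced in the proof of Lemma~\ref{ExpFormula}. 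So up to an $o(1)$ additive error the sum equals its truncation to $w\le w_0$ with the exponential replaced by $1+o(1)$:
\[
\mathbb{E}(\nvd_d)=(1+o(1))\frac{1}{d!}\sum_{w=0}^{w_0}\binom{n}{w}(\alpha+\beta)^{dw}(\beta+\gamma)^{d(n-w)}+o(1).
\]

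Next I would identify where this truncated binomial-type sum concentrates. Set $x=(\alpha+\beta)^d$ and $y=(\beta+\gamma)^d$, so the summand is $\binom{n}{w}x^w y^{n-w}$. The untruncated sum $\sum_{w=0}^n\binom{n}{w}x^wy^{n-w}=(x+y)^n=\big((\alpha+\beta)^d+(\beta+\gamma)^d\big)^n$ is the candidate value in \eqref{stmt1}. The full binomial sum is dominated by terms near $w^\star=\frac{x}{x+y}n$. The dichotomy then comes from comparing $w^\star$ with the cutoff $w_0$, which in turn is governed by whether $\mu_{w^\star}\to 0$ or $\to\infty$. Concretely: if at the dominant weight $w^\star$ the Poisson parameter $\mu_{w^\star}$ stays bounded (equivalently, some growth condition on $\alpha,\beta,\gamma$ holds — one expects this forces $\alpha+\beta$ and $\beta+\gamma$ to be $\le 1$ in the relevant regime), then $w^\star\le w_0$ up to lower-order terms, the truncation does not cut off the bulk, and Lemma~\ref{binomappr} together with a standard central-limit / Stirling estimate for the binomial gives $\mathbb{E}(\nvd_d)=\Theta\big(((\alpha+\beta)^d+(\beta+\gamma)^d)^n\big)$, i.e.\ \eqref{stmt1}. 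If instead $\mu_{w^\star}\to\infty$, then $w^\star>w_0$, the exponential $e^{-\mu_w}$ kills the dominant part of the binomial sum, and only weights $w\le w_0<w^\star$ survive; by Lemma~\ref{binomappr} (with $c$ chosen strictly between $w_0/n$ and $x/(x+y)$) the truncated sum is $\Theta\big(\binom{n}{w_0}x^{w_0}y^{n-w_0}\big)$, which, being evaluated strictly away from the binomial peak, is exponentially smaller than $(x+y)^n\le 2^{dn}$ — but I need the stronger conclusion $o(2^n)$, so here I would bound $\binom{n}{w_0}x^{w_0}y^{n-w_0}$ via the entropy formula and check that the exponential rate is strictly below $\log 2$, using that $x,y<1$ and $w_0/n$ is bounded away from its extreme value.

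The main obstacle I anticipate is making the case split clean and exhaustive, in particular the borderline situations: (i) when $\alpha+\beta=\beta+\gamma$, so $\mu_w$ is \emph{constant} in $w$ and the monotonicity argument degenerates — here either $\mu_w\to 0$ (then $\alpha+\beta=\beta+\gamma<1$, $w_0=n$, and the sum is the full $(x+y)^n$, giving \eqref{stmt1}) or $\mu_w\to\infty$ (then $e^{-\mu_w}=o(e^{-n})$ uniformly and the whole sum is $o(1)$, a special case of \eqref{stmt2}), or $\mu_w=\Theta(1)$ (then again the full sum up to constants, \eqref{stmt1}); and (ii) pinning down that in the "$\mu_{w^\star}\to\infty$" case the surviving truncated mass is genuinely $o(2^n)$ and not merely $o((x+y)^n)$. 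For (ii) the point is that $w_0$ is defined by $\mu_{w_0}\approx\eta^{-n}/\log n$ with $\eta=\max\{\alpha,\beta\}<1$, so $w_0/n$ converges to an explicit constant determined by $\alpha,\beta,\gamma$ that is strictly separated from $x/(x+y)$; plugging this into the entropy bound for $\binom{n}{w_0}x^{w_0}y^{n-w_0}$ and verifying the resulting exponent is $<\log 2$ is the computational heart of the argument, and I would isolate it as the one place where a careful (but still routine) estimate is unavoidable. Everything else — the Poisson-tail truncation, the geometric-series bound of Lemma~\ref{binomappr}, and the Stirling estimate at the peak — is bookkeeping that I would carry out in that order.
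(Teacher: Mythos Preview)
Your high-level plan---truncate where the Poisson parameter $\mu_w=(\alpha+\beta)^w(\beta+\gamma)^{n-w}$ becomes large and compare the binomial peak $w^\star=\tfrac{x}{x+y}n$ to the cutoff---is exactly the dichotomy the paper runs (its constants $c_1,c_2$ in Case~3). But the truncation step as written is wrong. You identify the cutoff with the $w_0$ of Lemma~\ref{ExpFormula} and then assert $e^{-\mu_w}=1+o(1)$ for all $w\le w_0$. That $w_0$ is defined by $\mu_{w_0}\approx\eta^{-n}/\log n$, which is exponentially \emph{large}; so for a wide range of weights $w\le w_0$ the factor $e^{-\mu_w}$ is exponentially small, not $1+o(1)$. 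Your displayed simplification $\mathbb{E}(Z_d)=(1+o(1))\tfrac{1}{d!}\sum_{w\le w_0}\binom{n}{w}x^wy^{n-w}+o(1)$ therefore fails, and everything downstream rests on it. The threshold you actually need is where $\mu_w$ crosses $1$ (the paper's $c_2 n$), not the $w_0$ of Lemma~\ref{ExpFormula}.

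Even after correcting the threshold, two real difficulties remain that your sketch does not cover. First, in the regime $w^\star>c_2 n$ there is a transition window of width $O(1)$ around $c_2 n$ where $e^{-\mu_w}=\Theta(1)$ and the terms just \emph{above} $c_2 n$ can match the entire truncated sum below it; one cannot simply invoke Lemma~\ref{binomappr} on a one-sided truncation---the paper's Case~3(iii) handles this by an explicit geometric-ratio bound on both sides of $c_2 n+\iota$. Second, and more seriously, when $\beta+\gamma=1<\alpha+\beta$ (Case~2) there is \emph{no} weight with $\mu_w\to 0$, so your dichotomy never starts: every term carries a nontrivial exponential factor, the maximiser of $a_{d,w}$ sits at a sublinear weight $k\approx\log_{\alpha+\beta}\log n$, and showing the sum is $\Theta(a_{d,k})=o(2^n)$ requires a separate, delicate computation that cannot be absorbed into a $w^\star$-versus-cutoff comparison or an entropy bound at a linear threshold.
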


\begin{proof}
There are six cases, according to the choice of $\alpha,$ $\beta$ and $\gamma$, that require different calculations depending on the terms that dominate the expectation of the number of vertices of degree $d$ in $K(n,P).$ For this we will use the asymptotic representation of $\mathbb{E}\left(\nvd_d\right)$ given in Lemma~\ref{ExpFormula}.\\

In order to shorten our notation we set
\[a_{d,w}=\binom{n}{w} \frac{(\alpha+\beta)^{dw}(\beta+\gamma)^{d(n-w)}}{d!}\exp(-(\alpha+\beta)^w(\beta+\gamma)^{n-w})\]
 and we obtain the representation \begin{equation}
\label{ExpDeg} \mathbb{E}\left(\nvd_d\right)=(1+o(1))\sum_{w=0}^n a_{d,w}+o(1).
 \end{equation}

\bf Case 1:\hspace{.2cm} $\beta+\gamma<\alpha+\beta=1.$\normalfont \\
Note that in this case
\[a_{d,w}=\binom{n}{w} \frac{(\beta+\gamma)^{d(n-w)}}{d!}\exp(-(\beta+\gamma)^{n-w}).\]
Furthermore, since $\exp\left(-\left(\beta+\gamma\right)^{n-w}\right)\leq 1$, for every $0\leq w \leq n$, we get
\begin{align*}
\mathbb{E}(Z_d)\stackrel{\eqref{ExpDeg}}{=}(1+o(1))\sum_{w=0}^n a_{d,w}+o(1)&\leq(1+o(1))\frac{\left(1+\left(\beta+\gamma\right)^d\right)^n}{d!}\, . 
\end{align*}
On the other hand, there is a constant $\varepsilon\in \big(0,(\beta+\gamma)/(1+\beta+\gamma)\big)$ such that for all $w\leq(1-\varepsilon)n$ we have $\exp\left(-\left(\beta+\gamma\right)^{n-w}\right)= 1+o(1)$  and thus 
\begin{align*}
\mathbb{E}(Z_d)&\stackrel{\eqref{ExpDeg}}{=}(1+o(1))\sum_{w=0}^n a_{d,w}+o(1)\\
&\geq(1+o(1))\sum_{w=0}^{(1-\varepsilon)n}\binom{n}{w}\frac{(\beta+\gamma)^{d(n-w)}}{d!}+o(1)\\
&\stackrel{L.\ref{binomappr}}{=}(1+o(1))\frac{\left(1+\left(\beta+\gamma\right)^d\right)^n}{d!}
\end{align*}
and Statement \eqref{stmt1} holds.\\

\bf Case 2:\hspace{.2cm} $1=\beta+\gamma<\alpha+\beta.$\normalfont \\
Let us first introduce two parameters: 
$$K=\log{n}-\log\log_{\alpha+\beta}\log{n}+(d+1)\log(\alpha+\beta),$$
$$k=\log_{\alpha+\beta}\left(\frac{K}{(\alpha+\beta)-1}\right) .$$
Observe that, asymptotically, we have \begin{equation}
\label{asympK}K=(1+o(1))\log n,
\end{equation} and thus \begin{equation}
\label{asympk}k=\log_{\alpha+\beta}\log n +O(1).
\end{equation}
 Next, note that in this case for any weight $w$ we have $$a_{d,w}=\binom{n}{w}\frac{\left(\alpha+\beta\right)^{dw}}{d!}\exp\left(-\left(\alpha+\beta\right)^w\right),$$ and thus, using Estimates \eqref{asympK} and \eqref{asympk}, we get  
 $$a_{d,k}=o\left(2^nn^{-(1+o(1))}(\log n)^d \right)=o(2^n).$$ Therefore Statement \eqref{stmt2} follows if we show that $\mathbb{E}(\nvd_d)=\Theta(a_{d,k}).$
In fact, it is sufficient to show that 
\begin{equation}\label{sumCase5}
\sum_{w=0}^{n}\frac{a_{d,w}}{a_{d,k}}=O(1),\end{equation}
since $\mathbb{E}(\nvd_d)\geq(1+o(1)) a_{d,k}+o(1)$ and $a_{d,k}=\omega(1)$.
We can divide this sum into three parts
\[\sum_{w=0}^{n}\frac{a_{d,w}}{a_{d,k}}=\sum_{w=1}^{k}\frac{a_{d,k-w}}{a_{d,k}}+1+\sum_{w=1}^{n-k}\frac{a_{d,k+w}}{a_{d,k}}\]
and analyze the parts of this sum separately. By the definition of $a_{d,w}$ we get for the first part 
\begin{align}
\nonumber \sum_{w=1}^{k}\frac{a_{d,k-w}}{a_{d,k}}&\leq(1+o(1))\sum_{w=1}^{k}\left(\frac{k}{n}\right)^w(\alpha+\beta)^{-dw} e^{\left(\alpha+\beta\right)^k\left(1-\left(\alpha+\beta\right)^{-w}\right)}.
\end{align}Furthermore, by the definition of $k,$ we have
\begin{align*}
\left(\alpha+\beta\right)^k\left(1-\left(\alpha+\beta\right)^{-w}\right)&=\frac{K}{\alpha+\beta}\cdot\frac{1-\left(\alpha+\beta\right)^{-w}}{1-\left(\alpha+\beta\right)^{-1}}\\
&=\frac{K}{\alpha+\beta}\sum_{i=0}^{w-1}\left(\alpha+\beta\right)^{-i}\\
&\stackrel{\alpha+\beta>1}{\leq} \frac{Kw}{\alpha+\beta},
\end{align*}
 and thus, extending the range of summation, we obtain \begin{align*}
\sum_{w=1}^{k}\frac{a_{d,k-w}}{a_{d,k}}&\leq(1+o(1))\sum_{w=1}^{\infty}\left(\frac{k}{(\alpha+\beta)^{d}n}\exp\left(\frac{K}{\alpha+\beta}\right)\right)^w.
\end{align*} Substituting $K$ and using Estimate \eqref{asympk} for $k$ yields \begin{align*}
\frac{k}{(\alpha+\beta)^{d}n}\exp\left(\frac{K}{\alpha+\beta}\right)&=\frac{\log_{\alpha+\beta}\log{n}+O(1)}{(\alpha+\beta)^{d}n}\left(\frac{n(\alpha+\beta)^{d+1}}{\log_{\alpha+\beta}\log{n}}\right)^{1/(\alpha+\beta)}\\
&=O\left(n^{-\frac{\alpha+\beta-1}{\alpha+\beta}}\log n\right) ,
\end{align*} and thus the first part of the sum in \eqref{sumCase5} is bounded by $$\sum_{w=1}^{k}\frac{a_{d,k-w}}{a_{d,k}}
\leq\sum_{w=1}^\infty \left(O\left(n^{-\frac{\alpha+\beta-1}{\alpha+\beta}}\log n\right)\right)^w=O(1).$$

Now it remains to consider the last term in the sum in \eqref{sumCase5}. The arguments involved in this step are similar to the ones just used, but we include them for sake of completeness. By the definition of $a_{d,w}$ we get
\begin{align*}
\sum_{w=1}^{n-k}\frac{a_{d,k+w}}{a_{d,k}}&\leq (1+o(1))\sum_{w=1}^{n-k}\left(\frac{n}{k}\right)^w(\alpha+\beta)^{dw}e^{-\left(\alpha+\beta\right)^k\left(\left(\alpha+\beta\right)^w-1\right)}.
\end{align*} Furthermore, by the definition of $k,$ we estimate 
\begin{align*}
-\left(\alpha+\beta\right)^k\left(\left(\alpha+\beta\right)^{w}-1\right)&=-K\frac{\left(\alpha+\beta\right)^{w}-1}{\alpha+\beta-1}\\
&=-K\sum_{i=0}^{w-1}\left(\alpha+\beta\right)^{i}\\
&\stackrel{\alpha+\beta>1}{\leq} -Kw.
\end{align*}
Thus, extending the range of summation, substituting $K$, and using Estimate \eqref{asympk} for $k$ shows that \begin{align*}
\sum_{w=1}^{n-k}\frac{a_{d,k+w}}{a_{d,k}}&\leq(1+o(1))\sum_{w=1}^{\infty}\left(\frac{n(\alpha+\beta)^d}{k}\exp\left(-K\right)\right)^w=\sum_{w=1}^{\infty} \left(\frac{1+o(1)}{\alpha+\beta}\right)^w.
\end{align*}
Since $\alpha+\beta>1$ the last term is $O(1)$ and, as stated earlier, Statement \eqref{stmt2} holds.\\

\bf Case 3:\hspace{.2cm} $\beta+\gamma<1<\alpha+\beta.$\normalfont \\
We start by defining two constants $c_1,c_2\in(0,1)$: Let
$$c_1=\frac{\left(\alpha+\beta\right)^d}{\left(\alpha+\beta\right)^d+\left(\beta+\gamma\right)^d}$$
and let $c_2$ be the unique real solution of
\[(\alpha+\beta)^{c_2}(\beta+\gamma)^{1-c_2}=1.\]
Recall that $a_{d,w}$ consists of a binomial part $$b_{d,w}=\frac{1}{d!}\binom{n}{w}(\alpha+\beta)^{dw}(\beta+\gamma)^{d(n-w)} $$ and an exponential part $$e_{d,w}=\exp\left(-\left(\alpha+\beta\right)^w\left(\beta+\gamma\right)^{n-w}\right)$$ with $a_{d,w}=b_{d,w}e_{d,w}.$ The choice of $c_1$ and $c_2$ allows us to analyze $a_{d,w}$ more systematically, due to the following two observations: When \linebreak[4] $w>(1+o(1))c_1n$, then the binomial term $b_{d,w}$ starts decreasing significantly, and similarly when $w>c_2n$, then the exponential term $e_{d,w}$ starts decreasing at a significant rate. We need different arguments depending on the relation between $c_1$ and $c_2,$ thus we consider three cases. \\ 

(i)  If $c_1<c_2$, by application of the Binomial Theorem and since $e_{d,w}\leq 1,$ we have that
\[\mathbb{E}(\nvd_d)\stackrel{\eqref{ExpDeg}}{\leq} (1+o(1))\frac{\left(\left(\alpha+\beta\right)^d+\left(\beta+\gamma\right)^d\right)^n}{d!}.\]
On the other hand, set $c=\frac{c_1+c_2}{2}\in(c_1,c_2)$ and observe that \begin{equation}
\label{adc}a_{d,cn}=o\left(\left(\left(\alpha+\beta\right)^d+\left(\beta+\gamma\right)^d\right)^n\right).
\end{equation} Moreover, we have for all $w\leq cn$ that $$(\alpha+\beta)^w(\beta+\gamma)^{n-w}=o(1),$$ and therefore the exponential term satisfies $e_{d,w}=1+o(1).$ Thus, by dropping some summands, it follows that
\begin{align*}
\mathbb{E}(\nvd_d)&\stackrel{\eqref{ExpDeg}}{\geq} (1+o(1))\sum_{w=0}^{cn} \binom{n}{w} \frac{(\alpha+\beta)^{dw}(\beta+\gamma)^{d(n-w)}}{d!}\\
&\geq (1+o(1))\frac{\left((\alpha+\beta)^d+(\beta+\gamma)^d\right)^n}{d!} 
\end{align*}
by application of Lemma~\ref{binomappr} and Estimate~\eqref{adc}. Hence Statement~\eqref{stmt1} is satisfied.\\

(ii) If $c_1=c_2$, we split the sum into three parts
\begin{equation}\label{sumCase6ii}
\mathbb{E}(\nvd_d)\stackrel{\eqref{ExpDeg}}{=}(1+o(1))\left(\sum_{w=0}^{c_1n-\log{n}}a_{d,w}+\sum_{w=c_1n-\log{n}}^{c_1n+\log{n}}a_{d,w}+\sum_{w=c_1n+\log{n}}^{n}a_{d,w}\right)+o(1).\end{equation}
Now note that we can express the binomial part $b_{d,w}$ of the summand $a_{d,w}$ with the help of a binomially distributed random variable \begin{equation}\label{binCase6}
b_{d,w}=\frac{1}{d!}\left(\left(\alpha+\beta\right)^d+\left(\beta+\gamma\right)^d\right)^n\mathbb{P}\left(\mathrm{Bin}\left(n,c_1\right)=w\right).
\end{equation} Using the normal approximation of the binomial distribution we will be able to estimate the three parts of the sum in Equation \eqref{sumCase6ii}. First of all, since $e_{d,w}\leq 1,$ we can estimate the second term
\begin{align*}\sum_{w=c_1n-\log{n}}^{c_1n+\log{n}} \hspace{-0.3cm}a_{d,w} 
&\hspace{-0.1cm}\stackrel{\eqref{binCase6}}{=}\hspace{-0.1cm}O\hspace{-0.1cm}\left[\frac{1}{d!}\left(\left(\alpha+\beta\right)^d+\left(\beta+\gamma\right)^d\right)^n\mathbb{P}\left(\left|\mathrm{N}\left(0,1\right)\right|\leq \frac{\log n}{c_1(1-c_1)n}\right)\right]\\
&=o\left(\left(\left(\alpha+\beta\right)^d+\left(\beta+\gamma\right)^d\right)^n\right).\end{align*}
Similarly, since $e_{d,w}=o(1)$ for the summands of the last term, we also have
\begin{align*}
\sum_{w=c_1n+\log{n}}^{n}\hspace{-0.2cm} a_{d,w}&\stackrel{\eqref{binCase6}}{=}o\hspace{-0.06cm}\left[\frac{1}{d!}\left(\left(\alpha+\beta\right)^d+\left(\beta+\gamma\right)^d\right)^n\mathbb{P}\left(\mathrm{N}\left(0,1\right)\geq \frac{\log n}{c_1(1-c_1)n}\right)\right]\\
&=o\left(\left(\left(\alpha+\beta\right)^d+\left(\beta+\gamma\right)^d\right)^n\right).
\end{align*}
Finally, we have $e_{d,w}=1+o(1)$ for the summands in the  first term, hence we get
\begin{align*}
\sum_{w=0}^{c_1n-\log{n}}a_{d,w}&\stackrel{\eqref{binCase6}}{=}(1+o(1))\frac{1}{d!}\left(\left(\alpha+\beta\right)^d+\left(\beta+\gamma\right)^d\right)^n\mathbb{P}\left(\mathrm{N}\left(0,1\right)\leq 0\right)\\
&=(1+o(1))\frac{1}{2d!}\left(\left(\alpha+\beta\right)^{d}+\left(\beta+\gamma\right)^{d}\right)^n,
\end{align*}
and thus Statement \eqref{stmt1} holds.\\

(iii) If $c_1>c_2$, the sum can be split into two parts,
\begin{equation}\label{sumCase6iii}
\mathbb{E}(\nvd_d)\stackrel{\eqref{ExpDeg}}{=}(1+o(1))\left(\sum_{w=0}^{c_2n+\iota}a_{d,w}+\sum_{w=c_2n+\iota+1}^{n}a_{d,w}\right)+o(1),
\end{equation} 
where $\iota$ is some constant which will be determined later. Our goal is to show $\mathbb{E}\left(\nvd_d\right)=\Theta\left(a_{d,c_2n+\iota}\right).$ 
This implies $\mathbb{E}\left(\nvd_d\right)=o\left(2^n\right),$ i.e.\ Statement \eqref{stmt2} holds, since by the definition of $c_2$ and Stirling's approximation for binomial coefficients we know that 
$$a_{d,c_2n+\iota}\leq b_{d,c_2n+\iota}=\frac{1}{d!}\binom{n}{c_2n+\iota}=o(2^n). $$ 
We begin the analysis with the second sum of \eqref{sumCase6iii}. Note that due to our choice of $c_2$, we have that for any $m\geq 0$
\begin{align*}
a_{d,c_2n+m}&=(\alpha+\beta)^{dc_2n}(\beta+\gamma)^{dc_2n}\left(\frac{\alpha+\beta}{\beta+\gamma}\right)^{dm}\\
&\hspace{4cm}\cdot\exp\left(-(\alpha+\beta)^{c_2n}(\beta+\gamma)^{c_2n}\left(\frac{\alpha+\beta}{\beta+\gamma}\right)^{m}\right)\\
&=\left(\frac{\alpha+\beta}{\beta+\gamma}\right)^{dm}\exp\left(-\left(\frac{\alpha+\beta}{\beta+\gamma}\right)^{m}\right).
\end{align*}
If we consider the quotient of two successive summands we obtain
$$\frac{a_{d,c_2n+m+1}}{a_{d,c_2n+m}}=\left(\frac{\alpha+\beta}{\beta+\gamma}\right)^d\exp\left(\left(1-\frac{\alpha+\beta}{\beta+\gamma}\right)\left(\frac{\alpha+\beta}{\beta+\gamma}\right)^m\right)$$
and since $(\alpha+\beta)/(\beta+\gamma)>1$ the sequence of these quotients is monotone decreasing in $m$. Now define $\iota$ as the smallest positive integer such that
$$\zeta_{d,\iota}:=\left(\frac{\alpha+\beta}{\beta+\gamma}\right)^d\exp\left(\left(1-\frac{\alpha+\beta}{\beta+\gamma}\right)\left(\frac{\alpha+\beta}{\beta+\gamma}\right)^\iota\right)<1$$
and note that the value of $\iota$ does not depend on $n$.
Therefore
\begin{align*}
\sum_{w=c_2n+\iota+1}^{n}a_{d,w}&\leq a_{d,c_2n+\iota}\sum_{i=1}^{n-c_2n-\iota} \left(\zeta_{d,\iota}\right)^i,
\end{align*} and since the sum can be bounded from above by a convergent geometric series this is $O(a_{d,c_2n+\iota}).$

On the other hand for the first summand in \eqref{sumCase6iii} we have that
$$\sum_{w=0}^{c_2n+\iota}a_{d,w}\leq \sum_{w=0}^{c_2n+\iota}b_{d,w}.$$
Since $c_2n+\iota=(1+o(1))c_2n$ and $c_2<c_1$ thus Lemma \ref{binomappr} implies that
$$\sum_{w=0}^{c_2n+\iota}b_{d,w} =O(b_{d,c_2n+\iota}).$$
Also $\iota$ does not depend on $n$ thus $e_{d,c_2n+\iota}$ is a constant and therefore $b_{d,c_2n+\iota}=O(a_{d,c_2n+\iota})$ completing Case $3$(iii).\\

\bf Case 4:\hspace{.2cm} $\beta+\gamma\leq\alpha+\beta<1.$\normalfont \\
Note in this case that in this case $e_{d,w}=(1+o(1))$ uniformly and thus
\begin{align*}
\mathbb{E}(\nvd_d)&\stackrel{\eqref{ExpDeg}}{=}\frac{(1+o(1))}{d!}\sum_{w=0}^n\binom{n}{w}(\alpha+\beta)^{dw}(\beta+\gamma)^{d(n-w)}+o(1)\\
&=(1+o(1))\frac{\left((\alpha+\beta)^{d}+(\beta+\gamma)^{d}\right)^n}{d!}+o(1).
\end{align*}
Hence Statement \eqref{stmt1} holds.\\

\bf Case 5:\hspace{.2cm} $1<\beta+\gamma\leq \alpha+\beta.$\normalfont \\
Observe that we get
\[\mathbb{E}\left(\nvd_d\right)\stackrel{\eqref{ExpDeg}}{\leq}(1+o(1))\exp\left(-(\beta+\gamma)^n\right)\left((\alpha+\beta)^d+(\beta+\gamma)^d\right)^n+o(1)=o(1),\]
and Statement \eqref{stmt2} holds.\\

\bf Case 6:\hspace{.2cm} $\beta+\gamma=\alpha+\beta=1.$\normalfont\\
We get
\[\mathbb{E}\left(\nvd_d\right)\stackrel{\eqref{ExpDeg}}{=}(1+o(1))\frac{2^n}{\mathrm{e}d!},\]
thus Statement \eqref{stmt1} holds  and thereby we complete the proof of Lemma~\ref{lem:expdeg}.
\end{proof}

With the help of these preliminary results we can now prove Theorem~\ref{main}.

\begin{proof}[Proof of Theorem \ref{main}]
In order for a graph to have a power law degree distribution it is necessary that the number of vertices of degree $d$ is approximately $cd^{-\beta}|
V|$; in particular, for every finite $d$ the expected number of vertices with degree $d$ has to be a linear fraction of all vertices. In Lemma~\ref{lem:expdeg} we have shown that either \begin{equation*}
\mathbb{E}(\nvd_d)=\Theta(((\alpha+\beta)^d+(\beta+\gamma)^d)^n),
\end{equation*} 
or 
\begin{equation*}
\mathbb{E}\left(\nvd_d\right)=o\left(2^n\right).
\end{equation*}
 If $\mathbb{E}\left(\nvd_d\right)=o\left(2^n\right)$, then Markov's inequality implies that the stochastic Kronecker graph a.a.s.\ does not follow a power law degree distribution. Clearly, the only parameter choice which can satisfy $\mathbb{E}(\nvd_d)=\Theta(2^n)$ for every finite $d$ is when $(\alpha+\beta)^d+(\beta+\gamma)^d=2$. However, this can hold only if $\alpha+\beta=\beta+\gamma=1$. A closer examination of this case gives us that 
\begin{equation}\label{EZd}
\mathbb{E}\left(\nvd_d\right)=(1+o(1))\frac{2^n}{\mathrm{e}d!}=(1+o(1))\frac{1}{\mathrm{e}d!}|V|,
\end{equation}
which indicates that the number of edges follows a Poisson distribution with parameter 1 and not a power law degree distribution. In fact, we will show that a.a.s.\ $$\nvd_d=(1+o(1))\mathbb{E}(\nvd_d).$$ Recall that for a vertex $u$ we denote by $N(u)$ the set of its neighbors and by $d(u)=\left|N\left(u\right)\right|$ its degree. Conditioning on the edge $\{u,v\}$ being present or not, we can estimate
\[\mathbb{P}\left(d\left(u\right)=d\right)-p_{u,v} \leq \mathbb{P}\left(\left|N\left(u\right)\backslash v\right|=d\right) \leq \mathbb{P}\left(d\left(u\right)=d\right)+p_{u,v},\]
and thus we have 
\begin{equation}\label{Prob}
\mathbb{P}\left(\left|N\left(u\right)\backslash v\right|=d\right)=(1+o(1))\mathbb{P}\left(d\left(u\right)=d\right)=(1+o(1))e^{-1}/d!\, .
\end{equation} 
Since the events $\left|N\left(u\right)\backslash v\right|=d$ and $\left|N\left(v\right)\backslash u\right|=d$ are independent, the second moment of $\nvd_d$ satisfies
\begin{align*}
 \mathbb{E}\left(\nvd_d^2\right)&=\sum_{u\in V} \mathbb{P}\left(d\left(u\right)=d\right) + \sum_{\substack{u,v\in V\\u\neq v}} \mathbb{P}\left(d\left(u\right)=d\left(v\right)=d\right) \\
&\leq \mathbb{E}\left(\nvd_d\right)+ \sum_{\substack{u,v\in V\\u\neq v}}\Big[ \mathbb{P}\left(\left|N\left(u\right)\backslash v\right|=d\right)\mathbb{P}\left(\left|N\left(v\right)\backslash u\right|=d\right)+p_{u,v}\Big]\\
&\stackrel{\eqref{Prob}}{=} (1+o(1))4^n \left(\frac{e^{-1}}{d!}\right)^2\stackrel{\eqref{EZd}}{=}(1+o(1))\left(\mathbb{E}\left(\nvd_d\right)\right)^2.
\end{align*}
The statement follows by applying the second moment method, i.e.\ by Chebyshev's inequality.
\end{proof}

\section{Small subgraphs: proofs of Lemma~\ref{subgraphs} and Theorem~\ref{cycleconc}}\label{ch:subgraphs}

Let $G$ be a fixed simple graph and label the vertices of $G$ with $1,\ldots,\left|V(G)\right|$.  Denote by $X_G$ the number of labeled copies of $G$ in the stochastic Kronecker graph $K(n,P)$.
Let $L_G$ be the set of functions $g:V(G)\rightarrow \mathbb{Z}_2$.
Define the base value of a graph to be
\[B_G=\sum_{g\in L_G}\prod_{\{u,v\}\in E(G)}P_{g(u),g(v)}\]
and for any fixed function $g\in L_G$ let its contribution to the base value of $G$ be
\[b_G(g)=\prod_{\{u,v\}\in E(G)}P_{g(u),g(v)}.\]

We now establish the expectation of the number of subgraphs in $K(n,P)$.

\begin{proof}[Proof of Lemma \ref{subgraphs}]
First we express $X_G$ as a sum of indicator random variables, one for each \emph{injective} function of the vertex set $V(G)$ into the vertex set $\mathbb{Z}_2^n$ of the stochastic Kronecker graph $K(n,P).$ Then we obtain an upper bound $U_G$ for the expectation of the number of copies of $G$ in $K(n,P)$ by summing over \emph{all} mappings from $V(G)$ into $\mathbb{Z}_2^n$.
\begin{align*}
\mathbb{E}(X_G)&=\sum_{\substack{\varphi:V(G)\to\mathbb{Z}_2^n\\\varphi \text{ inj.}}}\prod_{\{u,v\}\in E(G)}\hspace{-0.2cm} p_{\varphi(u),\varphi(v)}
\leq\sum_{\varphi:V(G)\to\mathbb{Z}_2^n}\prod_{\{u,v\}\in E(G)}\hspace{-0.2cm}p_{\varphi(u),\varphi(v)}=U_G.
\end{align*} 
Using the digit-wise representation of the probabilities $p_{\varphi(i),\varphi(j)}$ we get
\begin{align*}
U_G&=\sum_{\varphi:V(G)\to\mathbb{Z}_2^n}\prod_{\{u,v\}\in E(G)}\prod_{k=1}^n P_{\varphi(u)_k,\varphi(v)_k}.
\end{align*} 
Furthermore, note that, for any $k\in[n]$ and any function $\varphi:V(G)\to\mathbb{Z}_2^n$, restricting $\varphi$ to the $k$-th digit defines a function $g_k\in L_G$ with $g_k(w)=\varphi(w)_k$ for all vertices $w\in V(G).$ Therefore by changing the order of summation we can express the upper bound $U_G$ in terms of the base value $B_G$ $$U_G=\prod_{k=1}^{n}\sum_{g_k\in L_G}\prod_{\{u,v\}\in E(G)}\hspace{-0.2cm}P_{g_k(u),g_k(v)}
=\left(\sum_{g\in L_G}\prod_{\{u,v\}\in E(G)}\hspace{-0.2cm}P_{g(u),g(v)}\right)^n\hspace{-0.2cm}=\left(B_G\right)^n.$$

It remains to show that this upper bound is asymptotically tight. Therefore let us consider the error-term
\begin{align*}
\left(B_G\right)^n-\mathbb{E}(X_G)&=\hspace{-0.2cm}\sum_{\varphi:V(G)\to\mathbb{Z}_2^n}\prod_{\{u,v\}\in E(G)}\hspace{-0.2cm}p_{\varphi(u),\varphi(v)}-\hspace{-0.2cm}\sum_{\substack{\varphi:V(G)\to\mathbb{Z}_2^n\\\varphi \text{ inj.}}}\prod_{\{u,v\}\in E(G)}\hspace{-0.2cm}p_{\varphi(u),\varphi(v)}
\end{align*}
and note that the proof is complete if we show that it is $o\left(\left(B_G\right)^n\right)$.

Fix a pair of vertices $\{w_1,w_2\}\subset V(G)$ with $w_1\neq w_2$ and note that by a similar argument we have
\begin{align*}
\sum_{\substack{\varphi:V(G)\to\mathbb{Z}_2^n\\\varphi(w_1)=\varphi(w_2)}}\prod_{\{u,v\}\in E(G)}p_{\varphi(u),\varphi(v)}=\left(\sum_{\substack{g\in L_G\\g(w_1)=g(w_2)}}\prod_{\{u,v\}\in E(G)}P_{g(u),g(v)}\right)^n.
\end{align*}
Since all entries of the probability matrix $P$ are positive, in other words $\alpha,\beta,\gamma>0$, every $g\in L_{G}$ has a positive contribution $b_G(g)>0$ to the base value of $G$ and thus we have for every factor
\[\vartheta:=\sum_{\substack{g\in L_G\\g(w_1)=g(w_2)}}\prod_{\{u,v\}\in E(G)}P_{g(u),g(v)}<\sum_{g\in L_G}\prod_{\{u,v\}\in E(G)}P_{g(u),g(v)}=B_G.\]
Hence, as neither $\vartheta$ nor $B_G$ depend on $n$ we have that $\vartheta^n=o((B_G)^n)$
completing the proof.
\end{proof}

In order to show concentration for $X_G$ we will apply the second moment method. This means that we want to compare the variance of $X_G$ with its expectation. By expressing $X_G$ as a sum of indicator random variables we shall calculate the covariance for any two of the indicator random variables that are not independent. Therefore we have to consider any fixed graph $F$ formed by two edge-overlapping copies of $G$ and determine the expectation of $X_F,$ i.e. the expected number of copies of $F$ contained in $K(n,P).$ 

To this end, we characterize the set of graphs consisting of two overlapping copies of a fixed graph with the help of graph homomorphisms. 
But first note that, given two graphs $H_1$ and $H_2,$ any graph homomorphism $\psi:V\left(H_1\right)\rightarrow V\left(H_2\right)$  
canonically extends to 
a function $\widehat{\psi}:E\left(H_1\right)\rightarrow E\left(H_2\right)$, $\widehat{\psi}(\{u,v\})=\{\psi(u),\psi(v)\}$, which we call \emph{edge function}. 

\begin{definition}\label{def:graphunion}
Define $\mathcal{F}_G$ as the set of graphs $F$ such that there exist injective homomorphisms $f_1,f_2:V(G)\rightarrow V(F)$ such that the following holds.
\begin{itemize}
\item $V(F)=f_1(V(G))\cup f_2(V(G))$;
\item $E(F)=\widehat{f_1}(E(G))\cup \widehat{f_2}(E(G))$;
\item $\widehat{f_1}(E(G))\neq \widehat{f_2}(E(G))$;
\item $\widehat{f_1}(E(G))\cap \widehat{f_2}(E(G))\neq \emptyset$.
\end{itemize}
\end{definition}

With this notation, the following concentration lemma is a direct application of Chebyshev's inequality, .

\begin{lemma}[\cite{MR1782847}]\label{conc}
Let $G$ be a fixed graph. If for every graph $F\in \mathcal{F}_G$ we have that $\mathbb{E}(X_F)=o((\mathbb{E}(X_G))^2)$, then a.a.s.\ $X_G=(1+o(1))\mathbb{E}(X_G)$. 
\end{lemma}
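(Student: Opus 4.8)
The plan is to run the standard second moment argument. Write $X_G=\sum_{\varphi}I_\varphi$, where $\varphi$ ranges over all injective maps $V(G)\to\mathbb{Z}_2^n$ and $I_\varphi$ is the indicator of the event that every pair in $\widehat{\varphi}(E(G))$ is an edge of $K(n,P)$; thus $\mathbb{E}(I_\varphi)=\prod_{\{u,v\}\in E(G)}p_{\varphi(u),\varphi(v)}$ and summing recovers the quantity from Lemma~\ref{subgraphs}. By Chebyshev's inequality it is enough to show $\mathrm{Var}(X_G)=o\big((\mathbb{E}(X_G))^2\big)$, and then the claimed a.a.s.\ statement follows (assuming, as is implicit and holds in every application in this paper, that $\mathbb{E}(X_G)\to\infty$; for instance for $G$ with no isolated vertices and $e(G)\ge 1$).

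Next I would expand $\mathrm{Var}(X_G)=\sum_{\varphi,\varphi'}\mathrm{Cov}(I_\varphi,I_{\varphi'})$ and use that distinct potential edges of $K(n,P)$ are present independently: hence $\mathrm{Cov}(I_\varphi,I_{\varphi'})=0$ whenever $\widehat{\varphi}(E(G))\cap\widehat{\varphi'}(E(G))=\emptyset$, because then $I_\varphi$ and $I_{\varphi'}$ are functions of disjoint families of independent edge indicators. Consequently $\mathrm{Var}(X_G)\le\sum^{*}\mathbb{E}(I_\varphi I_{\varphi'})$, where the star restricts the sum to ordered pairs $(\varphi,\varphi')$ whose edge images share at least one edge. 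I would then split this restricted sum according to whether $\widehat{\varphi}(E(G))=\widehat{\varphi'}(E(G))$ or not.

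For a pair with $\widehat{\varphi}(E(G))\neq\widehat{\varphi'}(E(G))$, the subgraph of $K(n,P)$ on vertex set $\varphi(V(G))\cup\varphi'(V(G))$ with edge set $\widehat{\varphi}(E(G))\cup\widehat{\varphi'}(E(G))$ is, by Definition~\ref{def:graphunion} applied with $f_1=\varphi$, $f_2=\varphi'$, a copy of some $F\in\mathcal{F}_G$, and $I_\varphi I_{\varphi'}$ is precisely the indicator that this copy of $F$ is present. Conversely, each labeled copy of a fixed $F\in\mathcal{F}_G$ is produced by at most $c_F$ such pairs, where $c_F$ depends only on $G$ and $F$ (it is bounded by the number of ordered pairs of injective homomorphisms $V(G)\to V(F)$ witnessing $F\in\mathcal{F}_G$). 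Since every $F\in\mathcal{F}_G$ satisfies $|V(F)|\le 2|V(G)|$ and $|E(F)|\le 2|E(G)|$, the family $\mathcal{F}_G$ comprises only finitely many isomorphism types, so this part of the sum is at most $\sum_{F\in\mathcal{F}_G}c_F\,\mathbb{E}(X_F)=o\big((\mathbb{E}(X_G))^2\big)$ by hypothesis. For a pair with $\widehat{\varphi}(E(G))=\widehat{\varphi'}(E(G))$ we have $I_\varphi I_{\varphi'}=I_\varphi$, and for fixed $\varphi$ the number of admissible $\varphi'$ is bounded by a constant depending only on $G$ (at most $|\mathrm{Aut}(G)|$ when $G$ has no isolated vertices, which is the only regime used here), so this part contributes $O(\mathbb{E}(X_G))=o\big((\mathbb{E}(X_G))^2\big)$. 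Adding the two estimates gives $\mathrm{Var}(X_G)=o\big((\mathbb{E}(X_G))^2\big)$. I expect the only genuinely delicate point to be the combinatorial correspondence in this last paragraph: one must verify that each copy of $F\in\mathcal{F}_G$ comes from only boundedly many pairs $(\varphi,\varphi')$, so that the overlapping-pair sum is dominated by $\sum_{F\in\mathcal{F}_G}\mathbb{E}(X_F)$ and not by a larger quantity; everything else is routine bookkeeping.
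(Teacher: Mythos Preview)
Your argument is correct and is precisely the standard second moment computation the paper has in mind: the paper does not actually prove this lemma but merely cites \cite{MR1782847} and remarks that it is ``a direct application of Chebyshev's inequality.'' Your write-up spells out exactly that application, including the bookkeeping for the overlapping-pair sum, so there is nothing to add.
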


Therefore, we shall compare the base values of two graphs $H_1$ and $H_2$. The following Lemma will prove to be a useful tool for this.

\begin{lemma}\label{comp}
Let $H_1$ and $H_2$ be simple graphs. If there exists a surjective homomorphism $\phi:V(H_1)\rightarrow V(H_2)$ such that $\widehat{\phi}$ is injective, then $$B_{H_1}\geq B_{H_2}.$$
\end{lemma}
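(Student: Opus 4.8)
The plan is to exhibit an explicit injection from $L_{H_2}$ into $L_{H_1}$ under which the contribution to the base value can only increase, and then invoke positivity of the remaining summands. Throughout I write $P_{h(e')}$ as shorthand for $P_{h(x),h(y)}$ when $e'=\{x,y\}$.

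First I would set up the injection. Given the surjective homomorphism $\phi:V(H_1)\to V(H_2)$, for every $h\in L_{H_2}$ put $g_h:=h\circ\phi\in L_{H_1}$. The assignment $h\mapsto g_h$ is injective: if $h\circ\phi=h'\circ\phi$, then $h$ and $h'$ agree on $\phi(V(H_1))=V(H_2)$, so $h=h'$.

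Second, and this is the heart of the argument, I would compare $b_{H_1}(g_h)$ with $b_{H_2}(h)$. By definition
\[
b_{H_1}(g_h)=\prod_{\{u,v\}\in E(H_1)}P_{h(\phi(u)),h(\phi(v))}=\prod_{e\in E(H_1)}P_{h(\widehat\phi(e))},
\]
which makes sense because $\phi$ is a homomorphism of simple graphs, so $\widehat\phi(e)\in E(H_2)$ (in particular $\phi(u)\neq\phi(v)$ on every edge). Since $\widehat\phi:E(H_1)\to E(H_2)$ is injective by hypothesis, reindexing the product along the bijection $E(H_1)\to\widehat\phi(E(H_1))$ yields $b_{H_1}(g_h)=\prod_{e'\in\widehat\phi(E(H_1))}P_{h(e')}$. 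Because $\widehat\phi(E(H_1))\subseteq E(H_2)$ and every entry of $P$ is at most $1$, enlarging the product to all of $E(H_2)$ can only decrease it, so $b_{H_1}(g_h)\ge\prod_{e'\in E(H_2)}P_{h(e')}=b_{H_2}(h)$.

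Finally I would assemble the estimate. Since $\alpha,\beta,\gamma>0$, every $b_{H_1}(g)>0$, hence dropping from $B_{H_1}$ all summands except those indexed by $\{g_h:h\in L_{H_2}\}$ only decreases the sum:
\[
B_{H_1}=\sum_{g\in L_{H_1}}b_{H_1}(g)\ge\sum_{h\in L_{H_2}}b_{H_1}(g_h)\ge\sum_{h\in L_{H_2}}b_{H_2}(h)=B_{H_2}.
\]
The only step needing care is the reindexing: it is exactly the injectivity of $\widehat\phi$ that prevents two edges of $H_1$ from being pushed onto the same edge of $H_2$ (which, as the entries of $P$ are strictly below $1$, would reverse the inequality), while the homomorphism property is what guarantees that the corresponding factors already lie in $E(H_2)$. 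Everything else is routine bookkeeping, so I do not anticipate a genuine obstacle.
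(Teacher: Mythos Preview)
Your proof is correct and follows essentially the same approach as the paper: both define the injection $L_{H_2}\to L_{H_1}$ via precomposition with $\phi$, use injectivity of $\widehat\phi$ to reindex the product over $E(H_1)$ as a product over $\widehat\phi(E(H_1))\subseteq E(H_2)$, and then exploit that the entries of $P$ lie in $(0,1)$ to compare the products and sums. Your write-up is in fact slightly more explicit about why $\widehat\phi(e)$ is a genuine edge of $H_2$ (the homomorphism property), which the paper leaves implicit.
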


\begin{proof} 
Define a function $\Phi:L_{H_2}\rightarrow L_{H_1}$ by setting $\Phi(h_2)(v)=h_2(\phi(v))$ for every $v\in V(H_1)$ and $h_2\in L_{H_2}.$ 
Note that $\Phi$ is injective since $\phi$ is surjective providing a bijection \begin{equation}\label{bijPhi}
\Phi\left(L_{H_2}\right)\simeq L_{H_2},
\end{equation} and similarly, since $\widehat{\phi}$ is also injective, a bijection
\begin{equation}\label{bijphi}
\widehat{\phi}\left(E\left(H_1\right)\right)\simeq E\left(H_1\right).
\end{equation} 
Thus, we have
\begin{align*}
B_{H_2}&=\sum_{h_2\in L_{H_2}}\prod_{\{u,v\}\in E\left(H_2\right)}P_{h_2(u),h_2(v)}\\
&\leq \sum_{h_2\in L_{H_2}}\prod_{\{u,v\}\in \widehat{\phi}\left(E\left(H_1\right)\right)}P_{h_2(u),h_2(v)}\\
&\hspace{-2pt}\stackrel{\eqref{bijphi}}{=}\sum_{h_2\in L_{H_2}}\prod_{\{u,v\}\in E\left(H_1\right)}P_{h_2(\phi(u)),h_2(\phi(v))}\\
&=\sum_{h_2\in L_{H_2}}\prod_{\{u,v\}\in E\left(H_1\right)}P_{\Phi(h_2)(u),\Phi(h_2)(v)}\\
&\hspace{-2pt}\stackrel{\eqref{bijPhi}}{=}\sum_{h_1\in \Phi\left(L_{H_2}\right)}\prod_{\{u,v\}\in E\left(H_1\right)}P_{h_1(u),h_1(v)}\\
&\leq\sum_{h_1\in L_{H_1}}\prod_{\{u,v\}\in E\left(H_1\right)}P_{h_1(u),h_1(v)}=B_{H_1}. 
\end{align*}
\end{proof}

The standard application of Lemma \ref{comp} is the following: for any graph $H$ and any two vertices $u,v\in V(H),$ we create a graph $H'$ from $H$ by identifying $u$ and $v$, i.e.\ removing $v$ and all of the edges adjacent to $v$ and inserting an edge $\{u,w\}$ when $\{v,w\}\in E(H)$ and $w\neq v$. If this does not create any multiple edges, i.e.\ if $H'$ is a simple graph, then Lemma \ref{comp} implies that $B_{H'}\leq B_H$.

With the help of this we will now study the thresholds for the appearance of some classes of subgraphs and we start with stars.

\begin{theorem}\label{2ndMM}
Let $k\in\mathbb{N}.$ The threshold for the appearance of $K_{1,k}$ in the stochastic Kronecker graph is $(\alpha+\beta)^k+(\beta+\gamma)^k=1$. Additionally, if $(\alpha+\beta)^k+(\beta+\gamma)^k>1$, then a.a.s.\ $X_{K_{1,k}}=(1+o(1))\big((\alpha+\beta)^k+(\beta+\gamma)^k\big)^n$.
\end{theorem}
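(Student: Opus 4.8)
The plan is to prove Theorem~\ref{2ndMM} in two stages: first compute $B_{K_{1,k}}$ explicitly and invoke Lemma~\ref{subgraphs} to get $\mathbb{E}(X_{K_{1,k}})$, then apply the second moment method via Lemma~\ref{conc}. For the base value, I would enumerate functions $g:V(K_{1,k})\to\mathbb{Z}_2$ according to the value $g(c)\in\{0,1\}$ at the center $c$. If $g(c)=1$, each leaf $\ell$ contributes $P_{1,g(\ell)}$, so summing over $g(\ell)\in\{0,1\}$ gives $(\alpha+\beta)$ per leaf, hence $(\alpha+\beta)^k$ total; if $g(c)=0$, each leaf contributes $(\beta+\gamma)$, giving $(\beta+\gamma)^k$. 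Therefore $B_{K_{1,k}}=(\alpha+\beta)^k+(\beta+\gamma)^k$ and by Lemma~\ref{subgraphs}, $\mathbb{E}(X_{K_{1,k}})=(1+o(1))\big((\alpha+\beta)^k+(\beta+\gamma)^k\big)^n$. This already yields the threshold claim (via the first moment method for the upper direction and the second moment for the lower direction): if $B_{K_{1,k}}<1$ then $\mathbb{E}(X_{K_{1,k}})\to0$ and Markov gives a.a.s.\ no copy, while if $B_{K_{1,k}}>1$ concentration will give a.a.s.\ existence.

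For the concentration statement when $B_{K_{1,k}}>1$, by Lemma~\ref{conc} it suffices to show $\mathbb{E}(X_F)=o((\mathbb{E}(X_{K_{1,k}}))^2)=o(B_{K_{1,k}}^{2n})$ for every $F\in\mathcal{F}_{K_{1,k}}$, i.e.\ every graph obtained as the union of two edge-overlapping copies of $K_{1,k}$. The key structural observation is that two copies of a star can overlap in an edge only if their centers are identified (a leaf of one copy cannot play the role of the center of the other while sharing an edge, since a leaf has degree one in its own copy). So in any $F\in\mathcal{F}_{K_{1,k}}$ the two centers coincide, and $F$ is a star $K_{1,m}$ with $k\le m\le 2k-1$ (the two leaf-sets of size $k$ overlap in $2k-m$ leaves, and since $\widehat{f_1}(E)\neq\widehat{f_2}(E)$ we have $m>k$, i.e.\ $m\le 2k-1$ and $m\ge k+1$). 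By the explicit base-value computation, $B_{K_{1,m}}=(\alpha+\beta)^m+(\beta+\gamma)^m$, so $\mathbb{E}(X_F)=(1+o(1))B_{K_{1,m}}^n$, and it remains to check $B_{K_{1,m}}^n=o(B_{K_{1,k}}^{2n})$, which follows if $B_{K_{1,m}}<B_{K_{1,k}}^2$ for every $k+1\le m\le 2k-1$, since both quantities are constants independent of $n$.

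The remaining arithmetic inequality is the technical heart. Writing $s=\alpha+\beta$ and $t=\beta+\gamma$, I need $s^m+t^m<(s^k+t^k)^2=s^{2k}+2s^kt^k+t^{2k}$ for $k+1\le m\le 2k-1$. Since $m\le 2k-1<2k$ and $s,t$ can exceed $1$, the naive comparison $s^m\le s^{2k}$ fails; instead I would argue termwise using $k+1\le m$: by the AM–GM-type bound $s^m+t^m\le s^{m-k}(s^k+t^k)+t^{m-k}(s^k+t^k)-\text{(cross terms)}$, or more directly observe $s^m+t^m=s^{m-k}s^k+t^{m-k}t^k\le \max(s^{m-k},t^{m-k})(s^k+t^k)\le (s^k+t^k)^{1+(m-k)/k}\cdot(\text{bounded factor})$, and since $m-k\le k-1<k$ the exponent is $<2$; a cleaner route is $s^m+t^m\le (s^{k-1}+t^{k-1})(s^k+t^k) \le (s^k+t^k)^2$ using $m\le 2k-1$ and $s^{k-1}+t^{k-1}\le s^k+t^k$ when $s,t\ge1$ — but this needs care when $s$ or $t$ is less than $1$, so one splits into cases. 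The strict inequality in the relevant range then follows because at least one cross term $2s^kt^k>0$ is genuinely lost. I expect this elementary-but-fiddly case analysis on the parameters to be the main obstacle; everything else is a direct application of Lemmas~\ref{subgraphs} and~\ref{conc}.
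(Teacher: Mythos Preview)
Your structural claim that the two centers must coincide is false, and this is a genuine gap. Take two copies of $K_{1,k}$ with centers $c$ and $c'$ and suppose $f_1(c)=a\neq b=f_2(c')$. The edge sets are $\{\{a,f_1(\ell)\}:\ell\text{ leaf}\}$ and $\{\{b,f_2(\ell')\}:\ell'\text{ leaf}\}$. These share an edge precisely when $a$ is a leaf of the second copy and $b$ is a leaf of the first, i.e.\ the common edge is $\{a,b\}$. Nothing forbids this: a single vertex of $F$ may simultaneously be the image of a leaf under $f_1$ and of the center under $f_2$; your ``degree one in its own copy'' argument confuses degree in $G$ with degree in $F$. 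The paper treats exactly this case: when $f_1(u)\neq f_2(u)$ the overlap is a single edge, and among all such $F'$ the base value is maximized (via Lemma~\ref{comp}) by the graph $F^*$ in which the two stars meet only in that edge and its two endpoints. One then computes $B_{F^*}$ explicitly and bounds it by $B_{K_{1,2k-1}}$, reducing everything to the same family $B_{K_{1,k+\ell}}$, $0<\ell<k$, that you arrived at in the coinciding-centers case.

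Your handling of the inequality $B_{K_{1,k+\ell}}<B_{K_{1,k}}^2$ is also incomplete. The suggested bound $s^{k-1}+t^{k-1}\le s^k+t^k$ can fail when one of $s,t$ is below $1$ (and recall only $\gamma\le\alpha$ is assumed, so $t=\beta+\gamma$ may well be $<1$ while $s=\alpha+\beta>1$). The paper's clean split is: if $s\le 1$ then $B_{K_{1,k}}^2>B_{K_{1,k}}\ge s^{\ell}B_{K_{1,k}}\ge B_{K_{1,k+\ell}}$ using $B_{K_{1,k}}>1$; if $s>1$ then $B_{K_{1,k}}^2>s^{2k}+s^kt^k\ge s^{k-\ell}(s^{k+\ell}+t^{k+\ell})\ge B_{K_{1,k+\ell}}$. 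Both cases are two-line computations once you commit to the right grouping, so the ``elementary-but-fiddly'' step you anticipated is not the obstacle; the missing non-coinciding-centers case is.
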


\begin{proof}
Assume that in a labeling $g\in L_{K_{1,k}}$ the central vertex is labeled with 1 and exactly $i$ of the remaining vertices are labeled 1. Then we have  $b_{K_{1,k}}(g)=\alpha^{i}\beta^{k-i}$. Note that there are $\binom{k}{i}$ ways to create such a labeling. Similarly, if the central vertex is labeled 0 and exactly $i$ of the remaining vertices is labeled 0, then $b_{K_{1,k}}(g)=\gamma^{i}\beta^{k-i}$ and there are $\binom{k}{i}$ ways to create such a labeling. Thus, we have
\[B_{K_{1,k}}=\sum_{i=0}^k\binom{k}{i}\alpha^i\beta^{k-i}+\sum_{i=0}^k\binom{k}{i}\gamma^i\beta^{k-i}=(\alpha+\beta)^k+(\beta+\gamma)^k,\]
and therefore, if $(\alpha+\beta)^k+(\beta+\gamma)^k<1,$ we get $$\mathbb{E}\left(X_{K_{1,k}}\right)=(1+o(1))\left(B_{K_{1,k}}\right)^n=o(1).$$ In particular a.a.s.\ there is no copy of $K_{1,k}$ contained in $K(n,P).$  

Now for the rest of the proof we assume $(\alpha+\beta)^k+(\beta+\gamma)^k>1.$ Note that, by Lemma~\ref{conc}, it is sufficient to show that for every $F\in \mathcal{F}_{K_{1,k}}$ we have  $B_F<\left(B_{K_{1,k}}\right)^2.$ Fix a graph $F\in \mathcal{F}_{K_{1,k}}$ and let $f_1$ and $f_2$ be as in Definition \ref{def:graphunion}. Let $u$ denote the central vertex of the star $K_{1,k}$. There are two options: either the central vertices of the copies match, i.e.\ $f_1(u)=f_2(u)$, or they do not. If $f_1(u)=f_2(u)$, then we have that $F$ is a star namely $F=K_{1,k+\ell}$ for some $0< \ell <k$. 

On the other hand, if $f_1(u)\neq f_2(u)$, then the two copies of $K_{1,k}$ found in $F$ overlap in exactly one edge, i.e.\ $|\hat{f}_1(E(K_{1,k}))\cap \hat{f}_2(E(K_{1,k}))|=1$. Now let $F^*\in \mathcal{F}_{K_{1,k}}$ be the graph such that $f_1(u)\neq f_2(u)$ and the two graphs overlap only in the two end-vertices of the edge, i.e.\ $f_1(V(K_{1,k}))\cap f_2(V(K_{1,k}))=\{f_1(u)\cup f_2(u)\}$. 
Lemma \ref{comp} implies that for every graph $F'\in \mathcal{F}_{K_{1,k}}$ such that the central vertices of the copies of $K_{1,k}$ do not match, we have  $B_{F'}\leq B_{F^*}$ (see Figure \ref{fig:stars}).

\begin{figure}[h]
\begin{center}
\begin{tikzpicture}[scale=0.9]
\node (X0) at (14-1,0) [circle,fill=black, inner sep=1pt,label={below:1},label=left:{$f_1(u)$}] {};
\node (X1) at (14,1) [circle,fill=black, inner sep=1pt,label=above:{$f_1(v_1)=f_2(v_2)$},label=below:?] {};
\node (X2) at (14-2,1) [circle,fill=black, inner sep=1pt,label={left:1}] {};
\node (X3) at (14-2,-1) [circle,fill=black, inner sep=1pt,label={left:1}] {};
\node (X4) at (14-0.25,-1) [circle,fill=black, inner sep=1pt,label={left:1}] {};
\node (Y0) at (15,0) [circle,fill=black, inner sep=1pt,label={below:1},label=right:{$f_2(u)$}] {};
\node (Y2) at (16,1) [circle,fill=black, inner sep=1pt,label={right:1}] {};
\node (Y3) at (16,-1) [circle,fill=black, inner sep=1pt,label={right:1}] {};
\node (Y4) at (14.25,-1) [circle,fill=black, inner sep=1pt,label={right:1}] {};
\node at (14,-1.75) {$F'$};
{
\draw[dotted] (X0)--(X1);
\draw[dotted] (X0)--(X2);
\draw[dotted] (X0)--(X3);
\draw[dotted] (X0)--(X4);
}
{
\draw[dashed] (Y0)--(X1);
\draw[dashed] (Y0)--(Y2);
\draw[dashed] (Y0)--(Y3);
\draw[dashed] (Y0)--(Y4);
}
{\draw (X0)--(Y0);}
\node (X10) at (5,0) [circle,fill=black, inner sep=1pt,label={below:1},label=left:{$f_1(u)$}] {};
\node (X11) at (5.5,1) [circle,fill=black, inner sep=1pt,label=above:{$f_1(v_1)$},label=below:0] {};
\node (X12) at (4,1) [circle,fill=black, inner sep=1pt,label={left:1}] {};
\node (X13) at (4,-1) [circle,fill=black, inner sep=1pt,label={left:1}] {};
\node (X14) at (6,-1) [circle,fill=black, inner sep=1pt,label={left:1}] {};
\node (Y10) at (7.5,0) [circle,fill=black, inner sep=1pt,label={below:1},label=right:{$f_2(u)$}] {};
\node (Y11) at (7,1) [circle,fill=black, inner sep=1pt,label=above:$f_2(v_2)$,label=below:1] {};
\node (Y12) at (8.5,1) [circle,fill=black, inner sep=1pt,label={right:1}] {};
\node (Y13) at (8.5,-1) [circle,fill=black, inner sep=1pt,label={right:1}] {};
\node (Y14) at (6.5,-1) [circle,fill=black, inner sep=1pt,label={right:1}] {};
\node at (6.25,-1.75) {$F^{*}$};
{
\draw[dotted](X10)--(X11);
\draw[dotted] (X10)--(X12);
\draw[dotted] (X10)--(X13);
\draw[dotted] (X10)--(X14);
}
{
\draw[dashed] (Y10)--(Y11);
\draw[dashed] (Y10)--(Y12);
\draw[dashed] (Y10)--(Y13);
\draw[dashed] (Y10)--(Y14);
}
{\draw (X10)--(Y10);}
\end{tikzpicture}
\caption{\label{fig:stars} {\it Edge overlaping copies of two $K_{1,5}$. Solid lines indicate overlaping edges, dashed and dotted lines indicate non-overlaping edges of the individual copies. A particular vertex labeling (numbers) is given for $F^*$ which is not transferable to $F'$ in the sense that it cannot be decided what the label of the identified vertex ($f_1(v_1)=f_2(v_2)$) should be, as the labels for $f_1(v_1)$ and $f_2(v_2)$ differ.}}
\end{center}
\end{figure}
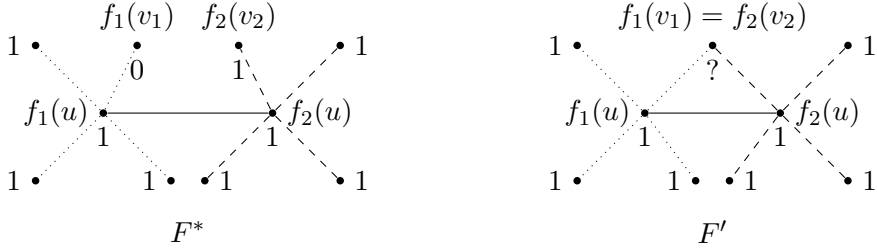
 Calculating the base value $B_{F^*}$ explicitly yields
\begin{align*}
B_{F^*}&=\alpha(\alpha+\beta)^{2k-2}+2\beta(\alpha+\beta)^{k-1}(\beta+\gamma)^{k-1}+\gamma(\beta+\gamma)^{2(k-1)}\\
&\leq (\alpha+\beta)^{2k-1}+(\beta+\gamma)^{2k-1}=B_{K_{1,2k-1}}.
\end{align*}
Thus, we only have to show that $B_{K_{1,k+\ell}}<\left(B_{K_{1,k}}\right)^2$ for $0<\ell<k$. If $\alpha+\beta\leq 1,$ we get, using $(\alpha+\beta)^k+(\beta+\gamma)^k>1,$ that
\begin{align*}
\left(B_{K_{1,k}}\right)^2=((\alpha+\beta)^k+(\beta+\gamma)^k)^2 &> (\alpha+\beta)^k+(\beta+\gamma)^k\\
&\geq (\alpha+\beta)^{\ell}\big((\alpha+\beta)^{k}+(\beta+\gamma)^{k}\big)\\
&\geq (\alpha+\beta)^{k+\ell}+(\beta+\gamma)^{k+\ell}=B_{K_{1,k+\ell}}\,.
\end{align*}
On the other hand, if $(\alpha+\beta)>1$, then we have
\begin{align*}
\left(B_{K_{1,k}}\right)^2=((\alpha+\beta)^k+(\beta+\gamma)^k)^2&> (\alpha+\beta)^{2k}+(\alpha+\beta)^k(\beta+\gamma)^k\\
&\geq (\alpha+\beta)^{2k}+(\alpha+\beta)^{k-\ell}(\beta+\gamma)^{k+\ell}\\
&=(\alpha+\beta)^{k-\ell}((\alpha+\beta)^{k+\ell}+(\beta+\gamma)^{k+\ell})\\
&\geq (\alpha+\beta)^{k+\ell}+(\beta+\gamma)^{k+\ell}=B_{K_{1,k+\ell}}\,.\,  \qedhere
\end{align*}
\end{proof}

Next we will show similar concentration results for trees and cycles, but only in the case when $\alpha=\gamma$. The arguments simplify if we examine edge labelings of a simple graph $G$, i.e.\ functions $\widehat{g}\in \widehat{L}_G:=\left\{\widehat{g}:E(G)\to \{0,1\}\right\},$ instead of vertex labelings of $G.$ For this we define a function $\Psi: L_G\to \widehat{L}_G$ by setting $\Psi(g)(\{u,v\})=\left|g(u)-g(v)\right|$ for all edges $\{u,v\}\in E(G)$ and vertex labelings $g\in L_G.$ Note that, if $G$ is connected, then there exist exactly two labelings $g_1,g_2\in L_G$ such that $\Psi(g_1)=\Psi(g_2)$. However, depending on $G$ there might exist edge labelings, for which no equivalent vertex labelings exist, e.g.\ labeling every edge of an odd cycle with 1. 
We call an edge labeling $\widehat{g}$ valid, if $\Psi^{-1}(\widehat{g})\neq \emptyset.$ The set of all valid edge labelings of $G$ is denoted by $\Psi\left(L_G\right)\subset\widehat{L}_G.$ We will once again be interested in the base value of $G$ but now will calculate by summing over all valid edge labelings. 

Now let $\alpha=\gamma.$ We define $b_G\left(\widehat{g}\right):=\alpha^{|\hat{g}^{-1}(0)|}\beta^{|\hat{g}^{-1}(1)|}$ to be the contribution to the base value of $G$ of a fixed edge labeling $\widehat{g}\in \Psi\left(L_G\right)$ and thus we get for every connected graph $G$ that its base value satisfies
\begin{equation}\label{eq:expectation}
B_G=\sum_{g\in L_G}b_G(g)=2\sum_{\widehat{g}\in \Psi\left(L_G\right)}\alpha^{|\widehat{g}^{-1}(0)|}\beta^{|\widehat{g}^{-1}(1)|}\,.
\end{equation}

With the help of this observation we now establish concentration results for trees and cycles.

\begin{theorem}\label{tree}
Let $T$ be a tree and assume that $\alpha=\gamma$. The threshold for the appearance of $T$ in the stochastic Kronecker graph is $2(\alpha+\beta)^{e(T)}=1$. Additionally, if $2(\alpha+\beta)^{e(T)}>1$, then a.a.s.\ $X_{T}=(1+o(1))(2(\alpha+\beta)^{e(T)})^n$.
\end{theorem}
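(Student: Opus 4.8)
The plan is to mirror the treatment of stars in Theorem~\ref{2ndMM}: first compute the base value $B_T$, read the threshold off the first moment, and then, for concentration above the threshold, reduce via Lemma~\ref{conc} to a base‑value inequality $B_F<(B_T)^2$ for the graphs $F\in\mathcal F_T$. For the base value I would use that a tree is connected and bipartite, so every edge labeling of $T$ lifts to a vertex labeling, i.e.\ $\Psi(L_T)=\widehat L_T$; hence \eqref{eq:expectation} gives
\[B_T=2\sum_{\widehat g\in\widehat L_T}\alpha^{|\widehat g^{-1}(0)|}\beta^{|\widehat g^{-1}(1)|}=2\sum_{i=0}^{e(T)}\binom{e(T)}{i}\alpha^{e(T)-i}\beta^{i}=2(\alpha+\beta)^{e(T)},\]
so by Lemma~\ref{subgraphs} we get $\mathbb E(X_T)=(1+o(1))(2(\alpha+\beta)^{e(T)})^n$. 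If $2(\alpha+\beta)^{e(T)}<1$ this is $o(1)$, and Markov's inequality shows $T$ is a.a.s.\ absent, which already pins down the threshold $2(\alpha+\beta)^{e(T)}=1$.

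Now suppose $2(\alpha+\beta)^{e(T)}>1$ (if $e(T)=1$ then $\mathcal F_T=\emptyset$ and there is nothing to prove, so assume $e(T)\ge2$). By Lemma~\ref{conc} it suffices to show $B_F<(B_T)^2$ for every $F\in\mathcal F_T$: since $B_F$ and $B_T$ are constants independent of $n$, Lemma~\ref{subgraphs} then yields $\mathbb E(X_F)=(1+o(1))B_F^{\,n}=o(B_T^{\,2n})=o((\mathbb E(X_T))^2)$. I would fix such an $F$ with injective homomorphisms $f_1,f_2$ as in Definition~\ref{def:graphunion}, let $T_i$ be the copy of $T$ with edge set $\widehat{f_i}(E(T))$, put $S=\widehat{f_1}(E(T))\cap\widehat{f_2}(E(T))$, $s=|S|\in\{1,\dots,e(T)-1\}$, and let $W=f_1(V(T))\cap f_2(V(T))$ be the set of shared vertices; write $P_g(\{u,v\})=P_{g(u),g(v)}$.

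The heart of the argument is the bound $B_F\le 2(\alpha+\beta)^{2e(T)-s}$. Since $E(F)$ is the disjoint union of $E(T_1)$ and $E(T_2)\setminus S$, decomposing a labeling $g$ of $F$ into $g_1:=g|_{V(T_1)}$ and its values on $V(T_2)\setminus W$ gives
\[B_F=\sum_{g_1\in L_{T_1}}b_{T_1}(g_1)\,\Sigma(g_1),\qquad\Sigma(g_1)=\sum_{g'}\ \prod_{e\in E(T_2)\setminus S}P_g(e),\]
where $g'$ runs over labelings of $V(T_2)\setminus W$, $g$ is the combined labeling, and $\Sigma(g_1)$ depends only on $g_1|_W$. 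Deleting the $s$ edges of $S$ from the tree $T_2$ leaves a spanning forest with exactly $s+1$ components, and each component contains an endpoint of some deleted edge; since endpoints of edges in $S$ lie in $W$, every component meets $W$. Hence $\Sigma(g_1)$ factorises over these components, each factor having the form $\sum_{g:\,g|_{W_C}=h}\prod_{e\in E(C)}P_g(e)$ for a tree $C$ with $W_C:=V(C)\cap W\neq\emptyset$. I claim every such factor is at most $(\alpha+\beta)^{e(C)}$; this follows by induction on $|V(C)|$, peeling a leaf $\ell$ chosen so that $C-\ell$ still has a vertex in $W_C$ (possible as $C$ has at least two leaves): summing over the label of an unpinned $\ell$ contributes the factor $\sum_{b\in\mathbb Z_2}P_{a,b}=\alpha+\beta$, which is independent of the parent's label $a$ precisely because $\alpha=\gamma$, whereas a pinned $\ell$ contributes a single entry $P_{a,b}\le\max(\alpha,\beta)<\alpha+\beta$. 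Multiplying over the $s+1$ components gives $\Sigma(g_1)\le(\alpha+\beta)^{e(T)-s}$, and therefore
\[B_F\le(\alpha+\beta)^{e(T)-s}\sum_{g_1\in L_{T_1}}b_{T_1}(g_1)=(\alpha+\beta)^{e(T)-s}B_{T_1}=2(\alpha+\beta)^{2e(T)-s}.\]

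Finally, $(B_T)^2=4(\alpha+\beta)^{2e(T)}$, so $B_F<(B_T)^2$ is equivalent to $(\alpha+\beta)^s>1/2$. If $\alpha+\beta\le1$ then $(\alpha+\beta)^s\ge(\alpha+\beta)^{e(T)}>1/2$ by the threshold hypothesis (as $s<e(T)$), while if $\alpha+\beta>1$ then $(\alpha+\beta)^s\ge\alpha+\beta>1/2$ since $s\ge1$; either way $B_F<(B_T)^2$, and Lemma~\ref{conc} completes the proof. The step I expect to be the main obstacle is the estimate $B_F\le 2(\alpha+\beta)^{2e(T)-s}$: one must verify carefully that removing the shared edges from $T_2$ produces components each anchored by a shared vertex — so that the leaf‑peeling induction has somewhere to start — and that the pinned‑leaf case of that induction goes through. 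This is exactly where the hypothesis $\alpha=\gamma$ is used, via $\sum_{b}P_{a,b}=\alpha+\beta$ for both values of $a$. (One could alternatively set up the reduction through Lemma~\ref{comp} by comparing $F$ with extremal overlap configurations, but the direct factorisation above seems cleaner.)
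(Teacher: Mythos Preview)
Your proof is correct and arrives at the same key inequality $B_F\le 2(\alpha+\beta)^{2e(T)-s}=2(\alpha+\beta)^{e(F)}$ as the paper, but by a genuinely different route. The paper exploits the edge-labeling identity \eqref{eq:expectation} in one stroke: since any $F\in\mathcal F_T$ is connected (the two copies share an edge), one has
\[
B_F=2\sum_{\widehat g\in\Psi(L_F)}\alpha^{|\widehat g^{-1}(0)|}\beta^{|\widehat g^{-1}(1)|}\le 2\sum_{\widehat g\in\widehat L_F}\alpha^{|\widehat g^{-1}(0)|}\beta^{|\widehat g^{-1}(1)|}=2(\alpha+\beta)^{e(F)},
\]
the point being that a tree with $e(F)$ edges attains equality because \emph{every} edge labeling of a tree is valid, whereas $F$ may have cycles and hence invalid labelings. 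Your leaf-peeling factorisation reaches the same bound from the vertex-labeling side instead, using that each component of $T_2\setminus S$ is anchored at a shared vertex and that the row sums of $P$ are constant when $\alpha=\gamma$. The paper's argument is essentially a one-liner once \eqref{eq:expectation} is available; yours is longer but more self-contained and makes the role of $\alpha=\gamma$ explicit in the induction step rather than hidden inside \eqref{eq:expectation}. Both then finish identically by comparing $2(\alpha+\beta)^{e(F)}$ with $(B_T)^2=4(\alpha+\beta)^{2e(T)}$ in the two regimes $\alpha+\beta\le1$ and $\alpha+\beta>1$.
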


\begin{proof}
Note that for a tree $T,$ every labeling is valid and Equation \eqref{eq:expectation} implies that 
\[\mathbb{E}(X_T)=(1+o(1))(2(\alpha+\beta)^{e(T)})^n,\] and the first statement follows by Markov's inequality.

Now assume $B_T>1$. We will again use the second moment method in form of Lemma~\ref{2ndMM}. Let $F'\in \mathcal{F}_T$ and let $F$ be a tree such that $e(F)=e(F')$. Since every labeling of the edges of $F$ is valid but, on the other hand, some labelings of $F'$ may not be valid, we have $B_{F}\geq B_{F'}$. Therefore, it is enough to show that for every tree $F$ such that $e(T)<e(F)<2e(T)$ we have that $B_F<\left(B_T\right)^2$. 
If $\alpha+\beta\leq 1$, we have
\begin{align*}
\left(B_T\right)^2=(2(\alpha+\beta)^{e(T)})^2> 2(\alpha+\beta)^{e(T)} \geq 2(\alpha+\beta)^{e(F)}=B_F.
\end{align*}
On the other hand, if $\alpha+\beta> 1$, we have
\begin{align*}
\left(B_T\right)^2=(2(\alpha+\beta)^{e(T)})^2= 4(\alpha+\beta)^{2e(T)} > 2(\alpha+\beta)^{e(F)}=B_F,
\end{align*}
completing the proof.
\end{proof}

We conclude this section with the proof of Theorem~\ref{cycleconc} concerning the threshold for the appearance of cycles in $K(n,P).$

\begin{proof}[Proof of Theorem \ref{cycleconc}]
Unlike in the case of trees, the edge labelings of $C_k$ are not necessarily valid. However, note that the only reason for a labeling not be valid is because it contains an odd number of $1$'s.  Therefore, Equation \eqref{eq:expectation} implies that
\[B_{C_k}=2\sum_{i=0}^{\lfloor k/2 \rfloor}\binom{k}{2i}\beta^{2i}\alpha^{k-2i}=(\alpha+\beta)^k+(\alpha-\beta)^k,\] and thereby the first statement of Theorem~\ref{cycleconc} holds due to Markov's inequality.

So now assume $B_{C_k}>1$ and proceed analogously to the proof of Theorem~\ref{tree}. First, for every $F'\in \mathcal{F}_{C_k}$, we will determine a graph $F$ such that $e(F)=e(F')$ and $B_F\geq B_{F'}$. 

Let $F_{\ell}$ be the graph created from two overlapping $k$-cycles such that the two cycles overlap in exactly $\ell$ consecutive edges.
Equivalently, $F_{\ell}$ is a pair of vertices $u,v$ which are connected by 3 vertex disjoint paths, where one consists of $\ell$ edges and the other two consist of $k-\ell$ edges. Note that a labeling of $F_{\ell}$ is valid if and only if the parity of the number of ones assigned to each of these paths is equal.
Therefore, by Equation \eqref{eq:expectation} we have 
\begin{align*}
B_{F_{\ell}}&=2\left(\sum_{i=0}^{\lfloor (k-\ell)/2\rfloor} \binom{k-\ell}{2i}\alpha^{k-\ell-2i}\beta^{2i} \right)^2\left(\sum_{i=0}^{\lfloor \ell/2\rfloor} \binom{\ell}{2i}\alpha^{\ell-2i}\beta^{2i}\right)\\
&\hspace{-.45cm}+2\left(\sum_{i=1}^{\lceil (k-\ell)/2\rceil} \binom{k-\ell}{2i-1}\alpha^{k-\ell-(2i-1)}\beta^{2i-1} \right)^2\left(\sum_{i=1}^{\lceil \ell/2\rceil} \binom{\ell}{2i-1}\alpha^{\ell-2i+1}\beta^{2i-1}\right)\\
&=2\left(\frac{(\alpha+\beta)^{k-\ell}+(\alpha-\beta)^{k-\ell}}{2}\right)^2\left(\frac{(\alpha+\beta)^{\ell}+(\alpha-\beta)^{\ell}}{2}\right)\\
&\hspace{+.8cm}+2\left(\frac{(\alpha+\beta)^{k-\ell}-(\alpha-\beta)^{k-\ell}}{2}\right)^2\left(\frac{(\alpha+\beta)^{\ell}-(\alpha-\beta)^{\ell}}{2}\right)
\end{align*}
where the second equality holds by several applications of the Binomial Theorem. Furthermore, this simplifies to 
\begin{align}
\label{baseFl1} B_{F_\ell}\hspace{-0.1cm}&=\hspace{-0.1cm}\frac{1}{2}\hspace{-0.1cm}\left((\alpha+\beta)^{2k-\ell}+(\alpha+\beta)^{\ell}(\alpha-\beta)^{2k-2\ell}+2(\alpha+\beta)^{k-\ell}(\alpha-\beta)^k\right)\\
&=\hspace{-0.1cm}\frac{1}{2}(\alpha+\beta)^{2k-\ell}\left(1+\left(\frac{\alpha-\beta}{\alpha+\beta}\right)^{2k-2\ell}+2\left(\frac{\alpha-\beta}{\alpha+\beta}\right)^k\right).\label{baseFl2}
\end{align}

Now let $F'\in \mathcal{F}_{C_k}$ such that $e(F')=2k-\ell$ and let $f_1',$ $f_2'$ be the corresponding functions as in Definition \ref{def:graphunion}. The edges of $F'$ can be partitioned into three sets: 
\begin{align*}
E_0&:=\widehat{f_1'}(E(C_k))\cap \widehat{f_2'}(E(C_k))\text{, }\\
 E_1&:=\widehat{f_1'}(E(C_k))\backslash \widehat{f_2'}(E(C_k)),\\
 E_2&:=\widehat{f_2'}(E(C_k))\backslash \widehat{f_1'}(E(C_k)).
\end{align*}

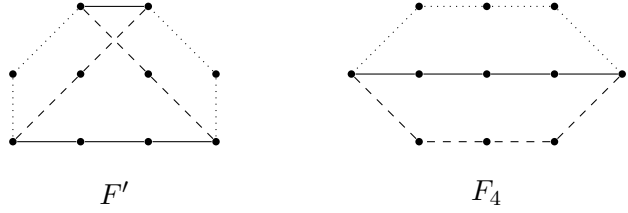
\begin{figure}[h]
\begin{center}
\begin{tikzpicture}[scale=0.9]

\node (Y0) at (-7,1) [circle,fill=black, inner sep=1pt,label={}] {};
\node (Y1) at (-8,1) [circle,fill=black, inner sep=1pt,label={}] {};
\node (Y2) at (-9,0) [circle,fill=black, inner sep=1pt,label={}] {};
\node (Y3) at (-8,0) [circle,fill=black, inner sep=1pt,label={}] {};
\node (Y4) at (-7,0) [circle,fill=black, inner sep=1pt,label={}] {};
\node (Y5) at (-6,0) [circle,fill=black, inner sep=1pt,label={}] {};
\node (Y6) at (-6,-1) [circle,fill=black, inner sep=1pt,label={}] {};
\node (Y7) at (-7,-1) [circle,fill=black, inner sep=1pt,label={}] {};
\node (Y8) at (-8,-1) [circle,fill=black, inner sep=1pt,label={}] {};
\node (Y9) at (-9,-1) [circle,fill=black, inner sep=1pt,label={}] {};
\node at (-7.5,-1.75){$F'$};
{
\draw[dotted] (Y5)--(Y6);
\draw[dotted] (Y5)--(Y0);
\draw[dotted] (Y1)--(Y2);
\draw[dotted] (Y2)--(Y9);
}

{
\draw[dashed] (Y0)--(Y3);
\draw[dashed] (Y3)--(Y9);
\draw[dashed] (Y1)--(Y4);
\draw[dashed] (Y4)--(Y6);
}

{
\draw (Y0)--(Y1);
\draw (Y7)--(Y8);
\draw (Y8)--(Y9);
\draw (Y6)--(Y7);
}

\node (X0) at (-1,1) [circle,fill=black, inner sep=1pt,label={}] {};
\node (X1) at (-2,1) [circle,fill=black, inner sep=1pt,label={}] {};
\node (X2) at (-3,1) [circle,fill=black, inner sep=1pt,label={}] {};
\node (X3) at (0,0) [circle,fill=black, inner sep=1pt,label={}] {};
\node (X4) at (-1,0) [circle,fill=black, inner sep=1pt,label={}] {};
\node (X5) at (-2,0) [circle,fill=black, inner sep=1pt,label={}] {};
\node (X6) at (-3,0) [circle,fill=black, inner sep=1pt,label={}] {};
\node (X7) at (-4,0) [circle,fill=black, inner sep=1pt,label={}] {};
\node (X8) at (-1,-1) [circle,fill=black, inner sep=1pt,label={}] {};
\node (X9) at (-2,-1) [circle,fill=black, inner sep=1pt,label={}] {};
\node (X10) at (-3,-1) [circle,fill=black, inner sep=1pt,label={}] {};
\node at (-2,-1.75){$F_4$};
{
\draw[dotted] (X0)--(X1);
\draw[dotted] (X1)--(X2);
\draw[dotted] (X2)--(X7);
\draw[dotted] (X0)--(X3);
}

{
\draw[dashed] (X8)--(X9);
\draw[dashed] (X9)--(X10);
\draw[dashed] (X3)--(X8);
\draw[dashed] (X10)--(X7);
}

{
\draw (X3)--(X4);
\draw (X4)--(X5);
\draw (X5)--(X6);
\draw (X6)--(X7);
}

\end{tikzpicture}
\caption{\label{cycles}{\it Two copies of $C_8$ which overlap in 4 edges. Solid lines indicate overlaping edges ($E_0$), dashed and dotted lines indicate non-overlaping edges of the individual copies ($E_1,E_2$). Labeling every edge with 1 is a valid labeling for $F_4$. However, since $F'$ contains an odd cycle, this labeling is not valid for $F'$.}}
\end{center}
\end{figure}

Note that for a labeling of edges to be valid the number of ones assigned to each of these sets must have the same parity. 
Therefore, the previous calculation for $B_{F_\ell}$ already implies $B_{F_\ell}\geq B_{F'}$, as we may sum over at most as many terms as before (see Figure \ref{cycles}).

Now, by Lemma~\ref{2ndMM} it is sufficient to show that $B_{F_{\ell}}<\left(B_{C_k}\right)^2$ for $0<\ell<k-1$. 
Observe that by Equation \eqref{baseFl2} we have that $B_{F_{\ell}}\geq B_{F_{\ell-1}}$ if and only if
\begin{align}
 1+\left(\frac{\alpha-\beta}{\alpha+\beta}\right)^{2k-2\ell}&+2\left(\frac{\alpha-\beta}{\alpha+\beta}\right)^k\\
\nonumber &\geq (\alpha+\beta)\left(1+\left(\frac{\alpha-\beta}{\alpha+\beta}\right)^{2k-2\ell+2}+2\left(\frac{\alpha-\beta}{\alpha+\beta}\right)^k\right)
\end{align}
or equivalently
\begin{align}
\left(1-\frac{(\alpha-\beta)^2}{\alpha+\beta}\right)\left(\frac{\alpha-\beta}{\alpha+\beta}\right)^{2k-2\ell}&\geq (\alpha+\beta-1)\left(1+2\left(\frac{\alpha-\beta}{\alpha+\beta}\right)^k\right).\label{cycleEst}
\end{align}
Note that the left hand side of Inequality \eqref{cycleEst} increases as $\ell$ increases, while the right hand side does not depend on $\ell$. Thus it is enough to show that $B_{F_1}<\left(B_{C_k}\right)^2$ and $B_{F_{k-2}}<\left(B_{C_k}\right)^2$. We will distinguish three cases for this.
\begin{enumerate}
\item If $\alpha+\beta\leq 1$, then we obtain
\begin{align*}
B_{F_\ell}&\stackrel{\eqref{baseFl2}}{=}\frac{1}{2}(\alpha+\beta)^{2k-\ell}\left(1+\left(\frac{\alpha-\beta}{\alpha+\beta}\right)^{2k-2\ell}+2\left(\frac{\alpha-\beta}{\alpha+\beta}\right)^k\right)\\
&\leq \frac{1}{2}(\alpha+\beta)^{k}\left(1+\left(\frac{\alpha-\beta}{\alpha+\beta}\right)^{2k-2\ell}+2\left(\frac{\alpha-\beta}{\alpha+\beta}\right)^k\right)\\
&< (\alpha+\beta)^{k}\left(1+\left(\frac{\alpha-\beta}{\alpha+\beta}\right)^k\right)=(\alpha+\beta)^k+(\alpha-\beta)^k=B_{C_k},
\end{align*}
where the last inequality follows from $|\alpha-\beta|<|\alpha+\beta|$.
Since $B_{C_k}>1$ the claim follows. 

\item If $\alpha+\beta>1$ and $\alpha\geq\beta$ or $\alpha+\beta>1$ and $k$ is even, then we have
\begin{align*}
B_{F_\ell}&\stackrel{\eqref{baseFl2}}{=}\frac{1}{2}(\alpha+\beta)^{2k-\ell}\left(1+\left(\frac{\alpha-\beta}{\alpha+\beta}\right)^{2k-2\ell}+2\left(\frac{\alpha-\beta}{\alpha+\beta}\right)^k\right)\\
&< \frac{1}{2}(\alpha+\beta)^{2k}\left(1+1+2\left(\frac{\alpha-\beta}{\alpha+\beta}\right)^{2k}+4\left(\frac{\alpha-\beta}{\alpha+\beta}\right)^k\right)\\
&= \left((\alpha+\beta)^k+(\alpha-\beta^k)\right)^2=\left(B_{C_k}\right)^2.
\end{align*}

\item If $\alpha+\beta>1$, $\beta>\alpha$ and $k$ is odd. 
Note that under these conditions $(\alpha-\beta)^k<0$ and therefore \begin{equation}
\label{est}(\alpha+\beta)^2(\alpha-\beta)^k\leq \frac{(\alpha-\beta)^{k+4}}{(\alpha+\beta)^2}\, ,
\end{equation} implying
\begin{align*}
B_{F_{k-2}}&\stackrel{\eqref{baseFl1}}{=}\frac{1}{2}\left((\alpha+\beta)^{k+2}+(\alpha+\beta)^{k-2}( \alpha-\beta)^{4}+2(\alpha+\beta)^{2}(\alpha-\beta)^k\right)\\
&\stackrel{\eqref{est}}{\leq}\frac{1}{2}\left((\alpha+\beta)^k+(\alpha-\beta)^k\right)\left((\alpha+\beta)^2+\frac{(\alpha-\beta)^{4}} {(\alpha+\beta)^2}\right).
\end{align*}
Furthermore note that for every odd $i\in \mathbb{N}$ we have  $$(\alpha+\beta)^i+(\alpha-\beta)^i<(\alpha+\beta)^{i+2}+(\alpha-\beta)^{i+2},$$ 
and therefore $B_{F_{k-2}}<\left(B_{C_k}\right)^2$ follows if we can show 
\[\frac{1}{2}\left((\alpha+\beta)^2+\frac{(\alpha-\beta)^{4}} {(\alpha+\beta)^2}\right)<(\alpha+\beta)^3+(\alpha-\beta)^3.\]
On the one hand, if $\alpha\geq 1/5$ we have 
\begin{align*}
\frac{1}{2}((\alpha+\beta)^4+(\alpha-\beta)^4)&=\alpha^4+6\alpha^2\beta^2+\beta^4\\
&<(2\alpha+5\beta)\alpha^4+10(\alpha+\beta)\alpha^2\beta^2+5\alpha\beta^4\\
&\leq (\alpha+\beta)^5+\alpha^5-\beta^5\\
&\leq (\alpha+\beta)^5+(\alpha+\beta)^2(\alpha-\beta)^3.
\end{align*}
On the other hand, if $\alpha<1/3$, then we have
\begin{align*}
\frac{1}{2}\left((\alpha+\beta)^2+\frac{(\alpha-\beta)^{4}} {(\alpha+\beta)^2}\right)&=\frac{\alpha^4+6\alpha^2\beta^2+\beta^4}{\alpha^2+2\alpha\beta+\beta^2}<1\, ,
\end{align*}
since $\alpha^4<\alpha^2$, $\beta^4<\beta^2$ and $6\alpha^2\beta^2<2\alpha\beta$.

\hspace{16pt} Hence, all that is left is to show that $B_{F_1}<\left(B_{C_k}\right)^2$. Assume $(\alpha+\beta)^{k-1}\leq 2$. Then we have
\begin{align*}
B_{F_1}&=\frac{1}{2}(\alpha+\beta)^{k-1}((\alpha+\beta)^k+(\alpha-\beta)^k)\\
&\hspace{16pt}+\frac{1}{2}(\alpha+\beta)(\alpha-\beta)^k((\alpha+\beta)^{k-2}+(\alpha-\beta)^{k-2})\\
&\leq\frac{1}{2}(\alpha+\beta)^{k-1}((\alpha+\beta)^k+(\alpha-\beta)^k)\\
&\leq(\alpha+\beta)^k+(\alpha-\beta)^k=B_{C_k}<\left(B_{C_k}\right)^2.
\end{align*}
However, if $(\alpha+\beta)^{k-1}>2$, then we have  
\begin{align*}
\left(\frac{\alpha-\beta}{\alpha+\beta}\right)^{2k-2}-2\left(\frac{\alpha-\beta}{\alpha+\beta}\right)^k&= \frac{(\alpha-\beta)^{2k-2}-2(\alpha+\beta)^{k-2}(\alpha-\beta)^{k}}{(\alpha+\beta)^{2k-2}}\\ 
&\leq \frac{1-2(\alpha^2-\beta^2)^{k-2}(\alpha-\beta)^{2}}{(\alpha+\beta)^{2k-2}}\\ 
&\leq \frac{1+2(\alpha-\beta)^{2}}{(\alpha+\beta)^{2k-2}}\leq \frac{3}{4}<1.
\end{align*}
Thus we have, as desired,
\begin{align*}
B_{F_1}&=\frac{1}{2}(\alpha+\beta)^{2k-1}\left(1+\left(\frac{\alpha-\beta}{\alpha+\beta}\right)^{2k-2}+2\left(\frac{\alpha-\beta}{\alpha+\beta}\right)^k\right)\\
&<\frac{1}{2}(\alpha+\beta)^{2k-1}\left(2+4\left(\frac{\alpha-\beta}{\alpha+\beta}\right)^k\right)\\
&<\frac{1}{2}(\alpha+\beta)^{2k}\left(2+2\left(\frac{\alpha-\beta}{\alpha+\beta}\right)^{2k}+4\left(\frac{\alpha-\beta}{\alpha+\beta}\right)^k\right)=\left(B_{C_k}\right)^2.\, \qedhere
\end{align*}
\end{enumerate}
\end{proof}

\section{Structural properties: proof of Theorem~\ref{ndistance}}\label{structure}

In this section we study the stochastic Kronecker graph when $\alpha=\gamma$ and $\alpha+\beta>1$. In particular, we are interested in the neighborhood $N(u)$ of a vertex $u$ and its degree $d(u)=\left|N(u)\right|$. We prove Theorem~\ref{ndistance} stating that all vertices have the same degree and for almost all edges the end points have asymptotically the same Hamming distance from one another.

\begin{proof}[Proof of Theorem~\ref{ndistance}]

Recall that for every vertex $u$ we have 
$$\mathbb{E}(d(u))=(1+o(1))(\alpha+\beta)^n$$
and 
$$\mathrm{Var}\left(d\left(v\right)\right)=(1+o(1))\mathbb{E}\left(d\left(v\right)\right),$$
by Lemma~\ref{expectation}. Thus, for a fixed vertex $u$ the Chernoff Bound implies that 
\[\mathbb{P}\left(\left|d(u)-\mathbb{E}(d(u))\right|>\log{n}\sqrt{n\mathbb{E}(d(u))}\right)=o(\exp(-n)).\]
The first statement follows by applying the union bound.

In order to prove the second statement we now define for each $k\in\{0,\dots,n\}$ a random variable $Y_{u,k}$ that counts the number of neighbors of $u$ at Hamming distance $k$ and recall that by representing $Y_{u,k}$ as a sum of indicator variables we get \begin{equation}\label{HamExp}
\mathbb{E}\left(Y_{u,k}\right)=\binom{n}{k}\alpha^{n-k}\beta^k=(\alpha+\beta)^n\, \mathbb{P}\left(\mathrm{Bin}\left(n,\frac{\beta}{\alpha+\beta}\right)=k\right).
\end{equation} Now we define a subset of the natural numbers
\begin{equation}\label{bad}
\mathcal{J}=\left\{k\in\{0,\dots,n\}:\left|k-\frac{\beta}{\alpha+\beta}n\right|>\sqrt{2\frac{\beta}{\alpha+\beta}}\log{n}\sqrt{n}\right\},
\end{equation} and call a neighbor $w$ of $u$ \emph{bad} if $$H(w,u)\in\mathcal{J} ,$$ i.e.\ $w$ is either too far or too close in Hamming distance to $u.$ Then we set
\[Y_u=\sum_{k\in\mathcal{J}}Y_{u,k},\] i.e.\ $Y_u$ is the random variable that counts the number of bad neighbors of $u.$ By linearity of expectation we get 
\begin{align*}
\mathbb{E}\left(Y_u\right)&=\sum_{k\in\mathcal{J}}\mathbb{E}\left(Y_{u,k}\right)\stackrel{\eqref{HamExp}}{=}\left(\alpha+\beta\right)^n\, \mathbb{P}\left(\mathrm{Bin}\left(n,\frac{\beta}{\alpha+\beta}\right)\in\mathcal{J}\right).
\end{align*} 
Moreover, by Definition \eqref{bad}, the Chernoff Bound provides the following upper bound for this probability
\begin{align*}
\mathbb{P}\left(\left|\mathrm{Bin}\left(n,\frac{\beta}{\alpha+\beta}\right)-\frac{\beta}{\alpha+\beta}n\right|>\sqrt{2\frac{\beta}{\alpha+\beta}}\log{n}\sqrt{n}\right)\leq 2e^{-(1+o(1))\log^2{n}},
\end{align*}
and thus we get for the expected number of bad neighbors of $u$
\begin{align*}
\mathbb{E}(Y_u)&\leq 2\exp\left(-(1+o(1))\log^2{n}\right)(\alpha+\beta)^n.
\end{align*}
Furthermore, note that the random variable $Y_u$ is the sum of independent Bernoulli random variables, hence $\mathrm{Var}\left(Y_u\right)\leq \mathbb{E}\left(Y_u\right),$ and thus the Chernoff Bound implies 
\begin{align*}
\mathbb{P}\left(Y_u>\frac{(\alpha+\beta)^n}{n}\right)&< \exp\left(-\frac{(1+o(1))(\alpha+\beta)^{2n}\exp((1+o(1))\log^2{n})}{3n^2(\alpha+\beta)^n}\right)\\
&=o(\exp(-n)).
\end{align*} Hence, with probability at least $1-o(\exp(-n))$, almost all neighbors of $u$ are not bad, i.e.\ their Hamming distance is not \enquote{too far} from $\frac{\beta}{\alpha+\beta}n$. Applying the union bound completes the proof.
\end{proof}

We have just proved that for every vertex $u$ in the stochastic Kronecker graph $K(n,P),$ with $\alpha=\gamma$ and $\alpha+\beta>1,$ that almost all of its neighbors have asymptotically the same Hamming distance from $u.$ Intuitively, for \enquote{small enough} $\alpha>0$ a.a.s.\ there should not even be a single edge between two vertices with a \enquote{small} Hamming distance, since the probability of having a fixed edge of that sort decreases rapidly with $\alpha\to 0$. More precisely, we will now show that if $\alpha<1/2$ then a.a.s.\ there are no edges between vertices that have Hamming distance at most $cn$ for some $0<c<\beta/(\alpha+\beta)<1.$ Similarly if $\beta<1/2$ then a.a.s.\ there are no edges between vertices that have Hamming distance at least $cn$ for some $0<\beta/(\alpha+\beta)<c<1.$ In fact these constants $c$ can be determined as the solutions of the following equation.  

\begin{lemma}\label{solutions}
Assume that $\alpha+\beta>1$. Then there is at most one solution $c\in(0,1)$ of the equation
\[\left(\frac{\beta}{c}\right)^c\left(\frac{\alpha}{1-c}\right)^{1-c}= \frac{1}{2}.\]
In addition, if $\alpha< 1/2$, then there is exactly one solution and $$0< c < \beta/(\alpha+\beta)<1.$$ However, if $\beta< 1/2$, then there is exactly one solution and $$0<\beta/(\alpha+\beta)<c< 1.$$
\end{lemma}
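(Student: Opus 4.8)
The plan is to take logarithms so that the equation becomes a one-variable calculus problem. Since all of $\alpha,\beta,c,1-c$ are positive, the left-hand side of the displayed equation is always positive, so the equation is equivalent to $g(c)=-\log 2$, where
\[g(c)=c\log\beta+(1-c)\log\alpha-c\log c-(1-c)\log(1-c)\qquad(c\in(0,1)).\]
Because $c\log c\to 0$ as $c\to 0^+$ (and symmetrically at $1$), the function $g$ extends continuously to $[0,1]$ with $g(0)=\log\alpha$ and $g(1)=\log\beta$, so the intermediate value theorem can be applied on the closed interval.

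First I would determine the shape of $g$. Differentiating gives $g'(c)=\log\!\bigl(\beta(1-c)/(\alpha c)\bigr)$ and $g''(c)=-1/c-1/(1-c)<0$, so $g$ is strictly concave on $(0,1)$, strictly increasing on $(0,c^\ast)$ and strictly decreasing on $(c^\ast,1)$, where $c^\ast$ is the unique point with $\beta(1-c^\ast)=\alpha c^\ast$, that is $c^\ast=\beta/(\alpha+\beta)\in(0,1)$. Using $1-c^\ast=\alpha/(\alpha+\beta)$ one computes $g(c^\ast)=c^\ast\log\frac{\beta}{c^\ast}+(1-c^\ast)\log\frac{\alpha}{1-c^\ast}=\log(\alpha+\beta)$, which by the hypothesis $\alpha+\beta>1$ is strictly positive; in particular $g(c^\ast)>-\log 2$.

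Next I would compare the boundary values $g(0)=\log\alpha$ and $g(1)=\log\beta$ with $-\log 2=\log\frac12$, splitting into cases according to the observation that $\alpha+\beta>1$ forbids $\alpha<\frac12$ and $\beta<\frac12$ simultaneously. If $\alpha\ge\frac12$ and $\beta\ge\frac12$, then $g(0),g(1)\ge-\log 2$, and strict concavity gives $g(c)>(1-c)g(0)+cg(1)\ge-\log 2$ for every $c\in(0,1)$, so there is no solution. If $\alpha<\frac12$ (hence $\beta>1-\alpha>\frac12$), then on $[0,c^\ast]$ the function $g$ increases continuously from $\log\alpha<-\log 2$ to $\log(\alpha+\beta)>-\log 2$, so by monotonicity and the intermediate value theorem there is exactly one solution, lying in $(0,c^\ast)=(0,\beta/(\alpha+\beta))$; on $[c^\ast,1]$ we have $g\ge\log\beta>-\log 2$, so there is no further solution. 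The case $\beta<\frac12$ is symmetric and produces a unique solution in $(\beta/(\alpha+\beta),1)$. In all cases there is at most one solution in $(0,1)$, and the stated locations hold, which completes the proof.

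I do not expect a real obstacle here: once the logarithm linearises the problem the argument is elementary calculus. The only load-bearing points are the identity $g(c^\ast)=\log(\alpha+\beta)$, which ties the maximum of $g$ directly to the hypothesis $\alpha+\beta>1$, and the remark that $\alpha+\beta>1$ rules out the symmetric configuration in which both $g(0)$ and $g(1)$ lie below $-\log 2$ (the only way to get two solutions of a concave equation); the only mild care needed is in justifying the continuous extension of $g$ to the endpoints before invoking the intermediate value theorem.
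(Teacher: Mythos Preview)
Your proof is correct and follows essentially the same approach as the paper: the paper analyses the function $\psi(c)=(\beta/c)^c(\alpha/(1-c))^{1-c}$ directly, while you work with its logarithm $g=\log\psi$, but the critical point $c^\ast=\beta/(\alpha+\beta)$, the key identity $\psi(c^\ast)=\alpha+\beta$ (your $g(c^\ast)=\log(\alpha+\beta)$), the boundary values, and the monotonicity/concavity argument are identical. Your explicit use of the chord inequality from strict concavity to dispose of the case $\alpha,\beta\ge 1/2$ is a small but pleasant refinement over the paper's terser treatment.
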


\begin{proof}
Let us define a function $\psi:(0,1)\to\mathbb{R}$ by setting
\[\psi(c)=\left(\frac{\beta}{c}\right)^c\left(\frac{\alpha}{1-c}\right)^{1-c}\]
and note that $\psi$ is continuous on $(0,1)$.
Furthermore, $\psi$ is differentiable on $(0,1)$ and we have that
\begin{align*}
\psi'(c)&=\ln{\left(\frac{\beta(1-c)}{\alpha c}\right)}\left(\frac{\beta}{c}\right)^c\left(\frac{\alpha}{1-c}\right)^{1-c}.
\end{align*}
Note that $\psi'(c)<0$ if and only if $\beta(1-c)<\alpha c$, or equivalently $c>\beta/(\alpha+\beta),$ and $\psi'(c)>0$ if and only if $\beta(1-c)>\alpha c$ or equivalently $c<\beta/(\alpha+\beta)$. Therefore $\psi(c)$ strictly increases on $(0,\beta/(\alpha+\beta))$ and strictly decreases  on $(\beta/(\alpha+\beta),1)$. Furthermore $\psi(c)\rightarrow \alpha$ as $c\rightarrow 0$, $\psi(c)\rightarrow \beta$ as $c\rightarrow 1$. Let $c_0=\beta/(\alpha+\beta),$ i.e.\ $c_0$ is the point where $\psi$ attains its unique, global maximum and note that we have
\[\psi\left(c_0\right)=\left(\frac{\beta}{c_0}\right)^{c_0}\left(\frac{\alpha}{1-c_0}\right)^{1-c_0}= (\alpha+\beta)^{c_0}(\alpha+\beta)^{1-c_0}=(\alpha+\beta)>1.\]
Therefore, if $\alpha< 1/2$, then there is a solution to $\psi(c)=1/2$ on $(0,\beta/(\alpha+\beta))$ and if $\beta< 1/2$ there is a solution on $(\alpha/(\alpha+\beta),1)$. The statement follows from the fact that at most one of $\alpha,\beta$ can be $< 1/2$.
\end{proof}

From Theorem~\ref{ndistance} and by application of Lemma~\ref{binomappr} we deduce the following theorem.

\begin{theorem}\label{noCloseNeighbor}
Assume that $\alpha=\gamma$ and $\alpha+\beta>1$. Let $c\in(0,1)$ satisfy
\[\left(\frac{\beta}{c}\right)^c\left(\frac{\alpha}{1-c}\right)^{1-c}= \frac{1}{2}.\]
Then, if $\alpha< 1/2$, a.a.s.\ no vertex $u$ has a neighbor $w$ such that $$H(u,w)<cn.$$ However, if $\beta< 1/2$, then a.a.s.\ no vertex $u$ has a neighbor $w$ such that $$H(u,w)>cn.$$
\end{theorem}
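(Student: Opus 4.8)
The plan is to prove both halves of the statement by a single first moment computation, bounding the expected number of edges joining two vertices that are too close (respectively too far) in Hamming distance; I describe the case $\alpha<1/2$ in detail, the case $\beta<1/2$ being analogous with the obvious changes indicated at the end. Recall from the proof of Theorem~\ref{ndistance} that, since $\alpha=\gamma$, a pair of vertices at Hamming distance $k$ is joined by an edge with probability $\alpha^{\,n-k}\beta^{\,k}$, so by \eqref{HamExp} the expected number of neighbours of a fixed vertex $u$ at distance exactly $k$ is $\binom{n}{k}\alpha^{n-k}\beta^{k}$. Summing over $k<cn$ and over all $2^{n}$ choices of $u$, the expected number of edges $\{u,w\}$ of $K(n,P)$ with $H(u,w)<cn$ is at most
\[
\frac12\,2^{n}\sum_{k=0}^{cn-1}\binom{n}{k}\alpha^{n-k}\beta^{k},
\]
so it suffices to show this quantity is $o(1)$; Markov's inequality then yields that a.a.s.\ no such edge exists, which is precisely the claim.

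To estimate the inner sum I would first invoke Lemma~\ref{solutions}: since $\alpha<1/2$, the constant $c$ is well defined and satisfies $c<\beta/(\alpha+\beta)$. Choosing an auxiliary constant $\kappa$ with $c<\kappa<\beta/(\alpha+\beta)$, so that $cn<\kappa n$ for $n$ large, Lemma~\ref{binomappr} with $x=\beta$ and $y=\alpha$ applies and gives
\[
\sum_{k=0}^{cn-1}\binom{n}{k}\alpha^{n-k}\beta^{k}=\Theta\!\left(\binom{n}{cn}\beta^{cn}\alpha^{(1-c)n}\right).
\]
By Stirling's formula $\binom{n}{cn}=\Theta(n^{-1/2})\bigl(c^{-c}(1-c)^{-(1-c)}\bigr)^{n}$, so the right-hand side equals $\Theta(n^{-1/2})\,\psi(c)^{n}$, where $\psi(t)=(\beta/t)^{t}(\alpha/(1-t))^{1-t}$ is the function from the proof of Lemma~\ref{solutions}; and $\psi(c)=1/2$ by the definition of $c$. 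Hence $\sum_{k<cn}\binom{n}{k}\alpha^{n-k}\beta^{k}=\Theta(n^{-1/2}2^{-n})$, the expected number of close edges is $\Theta(n^{-1/2})=o(1)$, and we are done. For the case $\beta<1/2$ one repeats the same argument using that now $c>\beta/(\alpha+\beta)$, invoking the second half of Lemma~\ref{binomappr} to collapse the tail sum $\sum_{k>cn}\binom{n}{k}\alpha^{n-k}\beta^{k}$ to its largest term, which is again $\Theta(n^{-1/2}2^{-n})$ since $\psi(c)=1/2$.

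The one point demanding genuine care — and the only place the argument could fail — is bookkeeping of the subexponential factors. Because $\psi(c)=1/2$ \emph{exactly}, a crude estimate $\binom{n}{cn}\beta^{cn}\alpha^{(1-c)n}=O(2^{-n})$ is not enough: multiplied by $2^{n}$ it would leave an expected number of close edges that is merely $O(1)$. It is essential to keep the $\Theta(n^{-1/2})$ coming from Stirling's approximation of the near-central binomial coefficient, as this is exactly what makes the first moment tend to $0$. A minor related subtlety is that replacing $cn$ by the nearest integer changes the base of the exponential from $\psi(c)$ to $\psi(c+O(1/n))$; since $\psi$ is continuously differentiable near $c$, this only multiplies the bound by $e^{O(1)}=\Theta(1)$, so it is harmless and can be dealt with in one line.
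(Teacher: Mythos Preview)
Your argument is correct and follows essentially the same route as the paper: bound the expected number of ``too close'' (respectively ``too far'') neighbour pairs using Lemma~\ref{solutions} to locate $c$ relative to $\beta/(\alpha+\beta)$, then Lemma~\ref{binomappr} to collapse the tail sum to its extreme term, and finally Stirling together with $\psi(c)=1/2$ to conclude that this term is $o(2^{-n})$, whence the first moment is $o(1)$. The only cosmetic difference is that you count bad edges globally and apply Markov once, whereas the paper bounds the expected number of bad neighbours of a fixed vertex and then takes a union bound over the $2^n$ vertices; these yield the same estimate up to a harmless factor of $2$. Your explicit emphasis on the $\Theta(n^{-1/2})$ from Stirling --- which is what turns an $O(1)$ bound into an $o(1)$ bound --- is exactly the point the paper encodes in its ``$o$'' step, so nothing is missing.
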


\begin{proof}
Assume that $\alpha< 1/2$. The expected number of neighbors of vertex $u$ at distance at most $cn$ is
\[\sum_{i=0}^{cn}\binom{n}{i}\alpha^{n-i}\beta^i.\]
Lemma \ref{solutions} implies that $c<\beta/(\alpha+\beta)$. Therefore, according to Lemma \ref{binomappr} we have 
\begin{align*}
\sum_{i=0}^{cn}\binom{n}{i}\alpha^{n-i}\beta^i &\stackrel{L. \ref{binomappr}}{=}O\left(\binom{n}{cn}\alpha^{(1-c)n}\beta^{cn}
\right)\\
&=o\left(\left(\frac{\alpha^{1-c}\beta^c}{c^c(1-c)^{1-c}}\right)^{n}\right)\\
&\stackrel{L. \ref{solutions}}{=}o(2^{-n}).
\end{align*}
The result follows by applying the union bound to the $2^n$ vertices and Markov's inequality. The case when $\beta< 1/2$ is analogous.
\end{proof}

Furthermore, the constant $c$ in Theorem~\ref{noCloseNeighbor} is optimal in the following sense.

\begin{theorem}
Assume that $\alpha=\gamma$ and $\alpha+\beta>1$. Let $c\in(0,1)$ satisfy
\[\left(\frac{\beta}{c}\right)^c\left(\frac{\alpha}{1-c}\right)^{1-c}= \frac{1}{2}.\]
Then, if $\alpha< 1/2$, a.a.s.\ there is an edge connecting two vertices $u,v$ such that $$H(u,w)=cn+\log^2{n},$$
and if $\beta< 1/2$, then a.a.s.\ there is an edge connecting two vertices $u,v$ such that $$H(u,w)=cn-\log^2{n}.$$
\end{theorem}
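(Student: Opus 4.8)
The plan is a direct first-moment estimate together with the independence of edges in $K(n,P)$. I give the argument for $\alpha<1/2$; the case $\beta<1/2$ is entirely analogous, with $cn+\log^2 n$ replaced by $cn-\log^2 n$ and with the ratio $(1-c)\beta/(c\alpha)>1$ below replaced by $c\alpha/((1-c)\beta)>1$. So assume $\alpha<1/2$ and put $k=cn+\log^2 n$; by Lemma~\ref{solutions} we then have $c<\beta/(\alpha+\beta)$. Let $W$ be the number of unordered pairs of vertices $\{u,v\}$ with $H(u,v)=k$ that are joined by an edge of $K(n,P)$. Since $\alpha=\gamma$, two vertices at Hamming distance $k$ are adjacent with probability $\alpha^{n-k}\beta^{k}$, and every vertex has $\binom{n}{k}$ vertices at Hamming distance $k$, so
\[\mathbb{E}(W)=\tfrac12\,2^{n}\binom{n}{k}\alpha^{n-k}\beta^{k}.\]

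The crux is to prove $\mathbb{E}(W)\to\infty$. By the standard estimates for binomial coefficients and the defining equation $(\beta/c)^{c}(\alpha/(1-c))^{1-c}=1/2$,
\[\binom{n}{cn}\alpha^{(1-c)n}\beta^{cn}=\Theta(n^{-1/2})\left(\frac{\beta^{c}\alpha^{1-c}}{c^{c}(1-c)^{1-c}}\right)^{n}=\Theta(n^{-1/2}2^{-n}),\]
so that $\tfrac12\,2^{n}\binom{n}{cn}\alpha^{(1-c)n}\beta^{cn}=\Theta(n^{-1/2})$. To pass from $cn$ to $k=cn+\log^{2}n$ I compare consecutive summands: for $1\le j\le \log^{2}n$,
\[\frac{\binom{n}{cn+j}\,\alpha^{\,n-cn-j}\beta^{\,cn+j}}{\binom{n}{cn+j-1}\,\alpha^{\,n-cn-j+1}\beta^{\,cn+j-1}}=\frac{(1-c)n-j+1}{cn+j}\cdot\frac{\beta}{\alpha}\ge(1-o(1))\,\frac{(1-c)\beta}{c\alpha},\]
uniformly in $j$ since $j\le\log^{2}n=o(n)$, and the inequality $c<\beta/(\alpha+\beta)$ is precisely the statement that $\rho:=(1-c)\beta/(c\alpha)>1$. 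Multiplying these $\log^{2}n$ bounds (the accumulated correction is $(1-O(\log^{2}n/n))^{\log^{2}n}=1-o(1)$) gives $\binom{n}{k}\alpha^{n-k}\beta^{k}\ge(1-o(1))\,\rho^{\log^{2}n}\binom{n}{cn}\alpha^{(1-c)n}\beta^{cn}$, and hence
\[\mathbb{E}(W)\ge(1-o(1))\,\rho^{\log^{2}n}\cdot\Theta(n^{-1/2})\to\infty,\]
because $\rho^{\log^{2}n}=\exp(\Omega(\log^{2}n))$ grows faster than any power of $n$.

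Finally, distinct vertex pairs correspond to independent edge events, so $W$ is a sum of independent indicator random variables, and therefore
\[\mathbb{P}(W=0)=\prod_{\{u,v\}:\,H(u,v)=k}(1-p_{u,v})\ \le\ \exp\Bigl(-\sum_{\{u,v\}:\,H(u,v)=k}p_{u,v}\Bigr)=\exp(-\mathbb{E}(W))\to 0.\]
Thus a.a.s.\ $W\ge 1$, i.e.\ a.a.s.\ there is an edge whose endpoints are at Hamming distance $cn+\log^{2}n$ (respectively $cn-\log^{2}n$ when $\beta<1/2$), which is the assertion. The one delicate point is the first-moment bound: at the threshold $k=cn$ one already has $\mathbb{E}(W)=\Theta(n^{-1/2})\to 0$, so the whole argument hinges on the fact that moving $k$ by only $\log^{2}n$ towards the maximizer $\beta n/(\alpha+\beta)$ of $w\mapsto\binom{n}{w}\alpha^{n-w}\beta^{w}$ multiplies $\mathbb{E}(W)$ by the super-polynomial factor $\rho^{\log^{2}n}$, which comfortably beats the $n^{-1/2}$ loss.
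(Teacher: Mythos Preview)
Your proof is correct and follows essentially the same approach as the paper: compute the expected number of edges at Hamming distance $k=cn+\log^2 n$, show it tends to infinity, and use independence to bound $\mathbb{P}(W=0)\le\exp(-\mathbb{E}(W))$. Your version is in fact slightly more careful than the paper's at one point: the paper effectively replaces $\binom{n}{cn+\log^2 n}$ by $(1+o(1))\binom{n}{cn}$ and tracks only the factor $(\beta/\alpha)^{\log^2 n}$, whereas you correctly combine the binomial and the probability ratios into the single quantity $\rho=(1-c)\beta/(c\alpha)$ and invoke Lemma~\ref{solutions} to get $\rho>1$; this is the cleaner justification, since $\binom{n}{cn+\log^2 n}/\binom{n}{cn}$ is not $1+o(1)$ in general.
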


\begin{proof}
Assume $\alpha< 1/2$ and note that this implies that $\alpha<\beta$. The probability $q$ that no two vertices at distance $cn+\log^2{n}$ are connected satisfies
\begin{align*}
q&=\left(1 \vphantom{-\alpha^{(1-c)n}\beta^{cn}\left(\frac{\beta}{\alpha}\right)^{\log^2{n}}}\right. \left. -\alpha^{(1-c)n}\beta^{cn}\left(\frac{\beta}{\alpha}\right)^{\log^2{n}}\right)^{2^{n-1}\binom{n}{cn+\log^2{n}}}\\
&\leq \exp\left(- \alpha^{(1-c)n}\beta^{cn}\left(\frac{\beta}{\alpha}\right)^{\log^2{n}}2^{n-1}(1+o(1))\binom{n}{cn}\right)\\
&\leq \exp\left(-\Theta\left(\frac{1}{\sqrt{n}}\right)\left(2\frac{\alpha^{(1-c)}\beta^{c}}{c^c(1-c)^{1-c}}\right)^n \left(\frac{\beta}{\alpha}\right)^{\log^2{n}}\right)=o(1),
\end{align*}
as $\beta>1/2> \alpha$. Hence, a.a.s.\ there is an edge between two vertices at distance $cn+\log^2n.$ The case $\beta< 1/2$ is analogous.
\end{proof}

\bibliographystyle{plain}
\bibliography{references}

\begin{thebibliography}{10}

\bibitem{probabilisticmethod}
Noga Alon and Joel~H. Spencer.
\newblock {\em The Probabilistic Method}.
\newblock Wiley, New York, 1992.

\bibitem{MR620729}
B{\'e}la Bollob{\'a}s.
\newblock {Threshold functions for small subgraphs}.
\newblock {\em Math. Proc. Cambridge Philos. Soc.}, 90(2):197--206, 1981.

\bibitem{R-MATintr}
Deepayan Chakrabarti, Yiping Zhan, and Christos Faloutsos.
\newblock {R-MAT: A Recursive Model for Graph Mining}.
\newblock In {\em {SIAM International Conference on Data Mining}}, pages
  133--145, 2004.

\bibitem{MR0125031}
P.~Erd\H{o}s and A.~R{\'e}nyi.
\newblock {On the evolution of random graphs}.
\newblock {\em Magyar Tud. Akad. Mat. Kutat{\'o} Int. K{\"o}zl.}, 5:17--61,
  1960.

\bibitem{MR2836474}
Chris Gro{\"e}r, Blair~D. Sullivan, and Steve Poole.
\newblock {A mathematical analysis of the {R}-{MAT} random graph generator}.
\newblock {\em Networks}, 58(3):159--170, 2011.

\bibitem{MR2900144}
Paul Horn and Mary Radcliffe.
\newblock {Giant components in {K}ronecker graphs}.
\newblock {\em Random Structures Algorithms}, 40(3):385--397, 2012.

\bibitem{MR1782847}
Svante Janson, Tomasz {\L}uczak, and Andrzej Ruci{\'n}ski.
\newblock {\em {Random graphs}}.
\newblock {Wiley-Interscience Series in Discrete Mathematics and Optimization}.
  Wiley-Interscience, New York, 2000.

\bibitem{Kroneckerfit}
Jure Leskovec, Deepayan Chakrabarti, Jon Kleinberg, Christos Faloutsos, and
  Zoubin Ghahramani.
\newblock {Kronecker Graphs: An Approach to Modeling Networks}.
\newblock {\em Journal of Machine Learning Research}, 11:985--1042, 2010.

\bibitem{Kroneckerintr}
Jurij Leskovec, Deepayan Chakrabarti, Jon Kleinberg, and Christos Faloutsos.
\newblock {Realistic, Mathematically Tractable Graph Generation and Evolution,
  Using Kronecker Multiplication}.
\newblock {\em PKDD {\rq}05: Proceedings of the 9th European Conference on
  Principles and Practice of Knowledge Discovery in Databases}, pages 133--145,
  2005.

\bibitem{MR2829312}
Mohammad Mahdian and Ying Xu.
\newblock {Stochastic {K}ronecker graphs}.
\newblock {\em Random Structures Algorithms}, 38(4):453--466, 2011.

\bibitem{Kroneckerconnectivity}
Mary Radcliffe and Stephen~J. Young.
\newblock {Connectivity and Giant Component of Stochastic Kronecker Graphs }.
\newblock {\em Arxiv}, (1310.7652), 2013.

\bibitem{SPK11}
C.~Seshadhri, Ali Pinar, and Tamara~G. Kolda.
\newblock An in-depth analysis of stochastic {K}ronecker graphs.
\newblock {\em J. ACM}, 60(2):Art. 13, 32, 2013.

\end{thebibliography}

\end{document}